\definecolor{Heather}{RGB}{164, 132, 172}
\setlist[enumerate,1]{label=\upshape(\arabic*)}
\newlist{myenumi}{enumerate}{1}
\setlist[myenumi,1]{label=\upshape(\roman*)}
\newlist{myenuma}{enumerate}{1}
\setlist[myenuma,1]{label=\upshape(\alph*)}
\declaretheorem[name=Theorem, numberwithin=section]{theorem}
\declaretheorem[name=Theorem, numbered=no]{theorem*}
\declaretheorem[name=Lemma, numberlike=theorem]{lemma}
\declaretheorem[name=Lemma, numbered=no]{lemma*}
\declaretheorem[name=Corollary, numberlike=theorem]{corollary}
\declaretheorem[name=Proposition, numberlike=theorem]{proposition}
\declaretheorem[name=Definition, numberlike=theorem, style=definition]{definition}
\declaretheorem[name=Convention, numberlike=theorem, style=definition]{convention}
\declaretheorem[name=Example, numberlike=theorem, style=remark]{example}
\declaretheorem[name=Remark, numberlike=theorem, style=remark]{remark}
\numberwithin{equation}{section}
\NewDocumentEnvironment{mytodoenv}{m}{\hypersetup{hidelinks}\textsf{\textbf{#1:}} }{}
\providecommand\@dotsep{5}
\def\listtodoname{List of Todos}
\def\listoftodos{\@starttoc{tdo}\listtodoname}
\title[Rigidity of spin NNSC fill-ins]{Rigidity of spin fill-ins with non-negative scalar curvature} %
\subjclass[2020]{53C27, 53C24 (Primary); 53C21, 53C23, 58J20 (Secondary)}
\author{Simone Cecchini}
\address[Simone Cecchini]{Department of Mathematics, Texas A\&M University}
\email{\href{mailto:cecchini@tamu.edu}{cecchini@tamu.edu}}
\urladdr{\href{https://simonececchini.org}{simonececchini.org}}
\author{Sven Hirsch}
\thanks{SH was supported by the National Science Foundation under Grant No. DMS-1926686, and by the IAS School of Mathematics.}
\address[Sven Hirsch]{Institute for Advanced Study, 1 Einstein Drive, Princeton, NJ 08540}
\email{\href{mailto:sven.hirsch@ias.edu}{sven.hirsch@ias.edu}}
\urladdr{\href{https://www.svenhirsch.com}{www.svenhirsch.com}}
\author{Rudolf Zeidler}
\thanks{RZ: Funded by the European Union (ERC Starting Grant 101116001 – COMSCAL). Views and opinions expressed are however those of the author(s) only and do not necessarily reflect those of the European Union or the European Research Council. Neither the European Union nor the granting authority can be held responsible for them.}
\thanks{RZ: Funded by the Deutsche Forschungsgemeinschaft (DFG, German Research Foundation) – Project numbers
523079177; %
427320536; %
390685587  %
}
\address[Rudolf Zeidler]{University of Münster, Mathematisches Institut, Einsteinstr.\ 62, 48149 Münster, Germany}
\curraddr{Universität Potsdam, Institut für Mathematik, Karl-Liebknecht-Str.\ 24--25, 14476 Potsdam, Germany}
\email{\href{mailto:rudolf.zeidler@uni-potsdam.de}{rudolf.zeidler@uni-potsdam.de}}
\urladdr{\href{https://www.rzeidler.eu}{www.rzeidler.eu}}
\begin{document}

\begin{abstract}
We establish new mean curvature rigidity theorems for spin fill-ins with non-negative scalar curvature using two different spinorial techniques.
Our results address two questions by Miao and Gromov, respectively.
The first technique is based on extending boundary spinors satisfying a generalized eigenvalue equation via the Fredholm alternative for an APS boundary value problem, while the second is a comparison result in the spirit of Llarull and Lott using index theory.
We also show that the latter implies a new Witten-type integral inequality for the mass of an asymptotically Schwarzschild manifold, which holds even when the scalar curvature is not assumed to be non-negative.
\end{abstract}
\maketitle

\section{Introduction}

Fill-ins and extensions with non-negative scalar curvature play a major role in mathematical relativity and geometry.

On the one hand, they provide an important tool in many proofs.
In \cite{bray2001proof, bunting1987nonexistence, cederbaum2017uniqueness, hirsch2020positive} manifolds with boundary are filled in by disks, in \cite{mccormick2019penrose} a collar fill-in is used to connect the boundary to a minimal surface, and in \cite{shi-tam-manifolds-with-boundary,wang2009isometric} asymptotically flat extensions of compact manifolds with boundary are constructed.
The common thread in all of these constructions is that the fill-ins and extensions allow the application of theorems which hold for manifolds without boundary or with minimal boundary such as the positive mass theorem or the Riemannian Penrose inequality.
The above methods can even be combined as in \cite{cederbaum2017uniqueness, mantoulidis2015bartnik}.
Moreover, fill-ins can be used to simplify the topology of the underlying manifold, see for instance \cite{agostiniani2023new, bray2022harmonic}.

On the other hand, fill-ins and extensions are used directly in several important definitions such as Bray's inner mass \cite{bray2001proof} and the Bartnik mass \cite{bartnik1989new} which is the subject of much current research.

In view of this plethora of applications, it is of great significance to better understand when fill-ins with non-negative scalar curvature exist and what their properties are.
In this article, we address two questions asked by Miao \cite[Question 2]{miao2021nonexistence} and Gromov \cite[page 3]{gromov2019scalar} concerning such fill-ins.

Let us start by recalling the notion of a fill-in with non-negative scalar curvature.
For a Riemannian manifold $M$ with boundary $\partial M$, we denote by $\mean_{\partial M}$ the mean curvature of $\partial M$.
We adopt the convention that the boundary of the unit disk $\Disk^n\subset\R^n$ has mean curvature \(n-1\).

\begin{definition}
Let $\Sigma$ be an $(n-1)$-dimensional closed manifold.
Given a metric $g_\Sigma$ on $\Sigma$ and a smooth function $h\colon\Sigma\to\R$, we say that an $n$-dimensional connected compact Riemannian manifold with boundary $(M,g)$ is a non-negative scalar curvature (NNSC) fill-in for $(\Sigma,g_\Sigma,h)$ if $(\partial M,g|_{\partial M})=(\Sigma,g_\Sigma)$, $\scal_M\geq0$ and $\mean_{\partial M}=h$.
We say that $h$ is the mean curvature of the fill-in $(M,g,h)$.
When $\Sigma$ is spin, we say that $(M,g)$ is a spin NNSC fill-in if $M$ is spin and induces the given spin structure on $\Sigma=\partial M$.
\end{definition}

Shi--Wang--Wei's extension theorem~\cite[Theorem~1]{shi2022total} establishes that, if $M$ is a compact manifold with boundary $\Sigma$ and $g_\Sigma$ is a Riemannian metric on $\Sigma$, then $g_\Sigma$ extends to a metric of positive scalar curvature on $M$. 
This shows that every closed null-bordant Riemannian manifold admits an NNSC fill-in.
However, their construction produces a fill-in with negative mean curvature.
\textcite[Question 2]{miao2021nonexistence} asked whether every closed null-bordant Riemannian manifold $(\Sigma,g_\Sigma)$ admits a NNSC fill-in with positive mean curvature.
Our first result answers Miao's question in the negative in the spin setting.

\begin{theorem}\label{Thm:Berger}
Let $\Sigma$ be a closed spin manifold of dimension $n \geq 3$ with $n \equiv 0,1,3$ or $7 \mod 8$.
Then there exists a metric $g_\Sigma$ on $\Sigma$ such that every spin NNSC fill-in of $(\Sigma,g_\Sigma)$ with non-negative mean curvature is Ricci-flat with minimal boundary.
Certain Berger spheres $(\Sphere^3,g)$ have this property.
\end{theorem}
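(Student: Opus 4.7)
\emph{Proof proposal.}
The plan is to run a Witten-type spinorial argument on the fill-in, with the boundary contributions controlled by an Atiyah-Patodi-Singer (APS) boundary value problem combined with the Fredholm alternative.

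First, I would produce $g_\Sigma$ so that the intrinsic Dirac operator $D_\Sigma$ admits a non-trivial spinor $\psi_\partial$ satisfying a generalized eigenvalue equation of the form $D_\Sigma \psi_\partial = \mu\, \psi_\partial$ with $\mu<0$ chosen so negative that the pointwise inequality $\mu + h/2 \leq 0$ holds for every non-negative function $h$ which could arise as the mean curvature of a spin NNSC fill-in. For a Berger sphere $(\Sphere^3, g)$, the explicit Dirac spectrum on squashed $3$-spheres supplies such an eigenspinor precisely once $\scal \leq -24$; this threshold is exactly the quantitative condition that makes the $\mu + h/2 \leq 0$ estimate close. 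For a general spin $\Sigma$ of dimension $n \equiv 0,1,3,7 \pmod 8$, I would produce $g_\Sigma$ by a Berger-type collapse along a principal $\Sphere^1$-bundle (after a spin-cobordism adjustment if needed), using the dimension hypothesis to guarantee the existence of the required eigenspinor in the collapsed limit.

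Second, given any putative spin NNSC fill-in $(M, g, h)$ with $h \geq 0$, I would extend $\psi_\partial$ to a harmonic spinor $\psi$ on $M$. The Dirac operator $D_M$ with an APS-type boundary condition given by the spectral cut just below $\mu$ is Fredholm, and by the Fredholm alternative $\psi_\partial$ admits a harmonic extension provided it is orthogonal to the finite-dimensional kernel of the adjoint APS problem. Here the dimension hypothesis is essential: in dimensions $n \equiv 0,1,3,7 \pmod 8$ Clifford-algebra periodicity provides a real or quaternionic structure on the spinor bundle commuting suitably with both $D_M$ and $D_\Sigma$, and after tensoring with an appropriate auxiliary flat bundle the $\mu$-eigenspace of $D_\Sigma$ can be made strictly larger than the obstruction space, guaranteeing a non-trivial extension.

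With such a $\psi$ in hand, I apply the Lichnerowicz--Schrödinger--Witten identity for harmonic spinors on a manifold with boundary,
\[
0 \;=\; \int_M \Bigl( |\nabla \psi|^2 + \tfrac{1}{4} \scal_M\, |\psi|^2 \Bigr)\, dV \;+\; \int_\Sigma \Bigl( \langle D_\Sigma \psi, \psi\rangle - \tfrac{h}{2}|\psi|^2 \Bigr)\, dA .
\]
Substituting $D_\Sigma \psi_\partial = \mu\, \psi_\partial$ and using $\scal_M \geq 0$, $h \geq 0$, and $\mu + h/2 \leq 0$, every summand has one definite sign and hence must vanish identically. This forces $\nabla \psi \equiv 0$, $\scal_M \equiv 0$, and $h|\psi|^2 \equiv 0$ on $\partial M$. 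Being a nontrivial parallel spinor on the connected manifold $M$, $\psi$ vanishes nowhere, so $h \equiv 0$ on $\partial M$, and Ricci-flatness of $(M, g)$ follows from the standard identity $R(X, Y)\cdot\psi = 0$ satisfied by any parallel spinor.

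The principal obstacle is the Fredholm step: one must simultaneously produce a boundary eigenspinor of sufficiently negative eigenvalue and control the cokernel of the adjoint APS problem. The dimension hypothesis $n \equiv 0,1,3,7 \pmod 8$ is used precisely here, via the real or quaternionic enhancements that provide enough multiplicity in the $\mu$-eigenspace of $D_\Sigma$ to overcome the obstruction and yield a nonzero harmonic extension.
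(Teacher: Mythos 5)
Your overall template (APS boundary condition, Fredholm alternative, Schrödinger--Lichnerowicz) matches the paper, but there are several concrete errors that make the argument fail as written.

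\textbf{Wrong eigenvalue target.} You want a boundary spinor with $D_\Sigma\psi_\partial = \mu\psi_\partial$ and $\mu<0$ so negative that $\mu + h/2 \leq 0$ for \emph{every} non-negative $h$ that could arise as the mean curvature of some fill-in. This is not achievable: you have no a priori bound on $h$, so no fixed $\mu$ can dominate $-h/2$ uniformly. Worse, it is not what is needed. In the sign convention of the paper (cf.\ \labelcref{eq:boundary_dirac}), the boundary contribution to the integrated Lichnerowicz formula is $\int_{\partial M}\bigl(\tfrac{1}{2}\mean_{\partial M}|\Psi|^2 - \langle \BdDirac\Psi,\Psi\rangle\bigr)$, and for a boundary eigenspinor with eigenvalue $\tfrac{1}{2}h$ together with $\mean_{\partial M}\geq h\geq 0$, it is automatically non-negative. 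Taking $h=0$, i.e.\ a genuine \emph{harmonic} boundary spinor $D_\Sigma\psi_\partial = 0$, already suffices, and this is what the paper does; the threshold $\scal\leq -24$ on the Berger sphere is precisely Hitchin's criterion for the existence of a \emph{harmonic} spinor, not a negative eigenspinor. The boundary term in your displayed Witten-type identity also appears with the wrong sign relative to the convention you set up, which is likely the source of the $\mu<0$ confusion.

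\textbf{Misattributed role of the dimension hypothesis.} The congruence $n\equiv 0,1,3,7\pmod 8$ is used in the paper only to invoke the theorems of Hitchin and B\"ar guaranteeing the existence of a metric on $\Sigma$ with a non-trivial harmonic spinor; it plays no role whatsoever in the boundary value problem. Your proposal instead tries to use it in the Fredholm step, claiming that real/quaternionic structures plus an auxiliary flat twist can force the $\mu$-eigenspace to be larger than the cokernel. That cannot work: the cokernel of the APS problem depends on the unknown fill-in $M$, so its dimension is uncontrollable, and nothing in the Clifford periodicity gives you a handle on it.

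\textbf{Missing key insight in the Fredholm step.} Ensuring that $\psi_\partial$ is orthogonal to the adjoint cokernel is neither what one needs nor what one can arrange; this is exactly the issue flagged in \cref{remark:extendability_issue} and the reason the argument of \cite{Hijazi_etal} has a gap in the case $h=0$. The paper's \cref{extension_lemma} sidesteps this entirely: \emph{both} branches of the Fredholm alternative deliver a non-trivial parallel spinor on $M$ (from the surjectivity branch as an extension of $\psi_\partial$, from the non-surjectivity branch as an element of the adjoint kernel). It is that double-sided use of the Lichnerowicz formula that makes the result hold even at $h=0$, which is precisely the case relevant to \cref{Thm:Berger}. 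Your proposal, by relying on orthogonality to the cokernel, would fail in exactly this case.

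Once the target is corrected to a harmonic boundary spinor and the Fredholm step is handled as in \cref{extension_lemma}, the rest of your outline (Lichnerowicz forces $\nabla\Psi=0$, $\scal_M\equiv 0$, $\mean_{\partial M}\equiv 0$, and a parallel spinor gives Ricci-flatness) is the right conclusion.
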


\cref{Thm:Berger} relies on a general extension principle for spinors exhibited in \cref{extension_lemma} (similar to earlier ideas of \textcite{Raulot:RigidityCompact,Hijazi_etal}, see~\cref{remark:previous_results}), combined with classical existence results of harmonic spinors on these Berger spheres due to Hitchin~\cite{Hitchin} and, in the more general cases, due to further results of both \textcite{Hitchin} as well as \textcite{Baer_Harmonic_spinors}.
We remark that harmonic spinors can often be produced independently of the metric using the Atiyah--Singer index theorem, e.g.\ if \(\Ahat({\Sigma}) \neq 0\), but then \(\Sigma\) would not be null-bordant (see~\cite[Chapter III, \S 11, (11.21)]{LawsonMichelsohn}), so it would not make sense to talk about fill-ins in these situations.
The significance of \cref{extension_lemma} is that it does not rely on the index theorem and in \cref{Thm:Berger} we can thus treat special metrics on null-bordant manifolds such as the spheres \(\Sphere^{4k+3}\).

This method also shows that mean-convex spin domains admitting a parallel spinor are extremal with respect to spin fill-ins as in the next theorem.

\begin{theorem}\label{Thm:ricci_flat_extremality}
    Let \((N,g_N)\) be a spin manifold which admits a parallel spinor and has a compact mean-convex boundary \(\Sigma = \partial N\), that is, \(\mean_{\partial N} \geq 0\).
    Let \(g_\Sigma\) be the induced metric on \(\Sigma\) and let \(h \in \Ct^\infty(\Sigma, \R)\) with \(h \geq \mean_{\partial N}\).
    Then every spin NNSC fill-in of \((\Sigma, g_{\Sigma}, h)\) admits a parallel spinor and in this case \(h = \mean_{\partial N}\).
    Moreover, if \(h \not\equiv 0\), then the existence of a spin NNSC fill-in of \((\Sigma, g_{\Sigma}, h)\) implies that \(\Sigma\) is connected.
\end{theorem}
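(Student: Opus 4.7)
The plan is to produce a harmonic spinor on the fill-in $(M,g)$ by extending the restriction to $\Sigma$ of the parallel spinor on $N$, and then to run a Lichnerowicz-type integration by parts to force rigidity.

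First, restrict the parallel spinor $\psi_N$ on $N$ to $\Sigma = \partial N$, obtaining a spinor $\phi_0 := \psi_N|_\Sigma$ of constant positive pointwise norm. A standard computation relating $\nabla^N$ to $\nabla^\Sigma$ on $\Sigma$, combined with $\nabla^N \psi_N = 0$, shows that $\phi_0$ satisfies the generalized eigenvalue equation
\[
    D_\Sigma \phi_0 = \tfrac{1}{2}\mean_{\partial N}\,\phi_0,
\]
where $D_\Sigma$ is the hypersurface Dirac operator attached to the spin structure on $\Sigma = \partial M = \partial N$ (this operator depends only on $g_\Sigma$ and the spin structure, not on whether $\Sigma$ is viewed as the boundary of $N$ or of $M$).

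Second, apply the extension principle of \cref{extension_lemma} to $\phi_0$ to produce a spinor $\psi$ on the NNSC fill-in $(M,g)$ solving $D_M\psi = 0$ with boundary values compatible with $\phi_0$ through an APS-type boundary condition. This is exactly the scenario envisioned by that lemma: the input is a boundary spinor satisfying a generalized eigenvalue equation with eigenfunction prescribed by a smooth real function on $\Sigma$, and the Fredholm alternative for the associated APS problem furnishes the harmonic extension.

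Third, apply the Lichnerowicz--Weitzenböck formula to $\psi$ on $M$. Using $D_M\psi = 0$, the mean curvature $h$ of $\Sigma$ in $M$, and the boundary equation for $\phi_0$, the standard integration-by-parts computation yields
\[
    \int_M \bigl(|\nabla\psi|^2 + \tfrac{\scal_M}{4}|\psi|^2\bigr) \;+\; \tfrac{1}{2}\int_\Sigma (h-\mean_{\partial N})\,|\phi_0|^2 \;=\; 0.
\]
Each of the three summands is non-negative by the hypotheses $\scal_M \geq 0$ and $h \geq \mean_{\partial N}$, so each must vanish. Since $|\phi_0|$ is a positive constant on $\Sigma$, the boundary summand forces $h \equiv \mean_{\partial N}$, while vanishing of $|\nabla\psi|^2$ gives $\nabla\psi \equiv 0$ on $M$, so that $\psi$ is the desired parallel spinor on the fill-in.

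The principal technical step is the extension in the second paragraph: one must verify that the generalized eigenvalue equation satisfied by $\phi_0$ fits precisely the form required by \cref{extension_lemma} and that the Fredholm alternative for the APS problem is satisfied here. Once this harmonic extension is available, the remainder of the argument is a direct application of the Weitzenböck formula, with the boundary term assembled exactly to match the sign hypothesis $h \geq \mean_{\partial N}$.
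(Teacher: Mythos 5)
Your overall strategy coincides with the paper's: restrict the parallel spinor on $N$ to $\Sigma$ (using \cref{eq:boundary_dirac}) to obtain a boundary spinor satisfying $\BdDirac \psi_0 = \tfrac{1}{2}\mean_{\partial N}\psi_0$, observe that the boundary Dirac operator is intrinsic to $(\Sigma, g_\Sigma)$, and then invoke the extension principle. The paper simply applies \cref{prop:spinor_extension} with the eigenvalue function $\mean_{\partial N}$, whose conclusion already gives both assertions of the theorem at once---a parallel spinor on the fill-in and $\mean_{\partial M}=\mean_{\partial N}$, i.e.\ $h=\mean_{\partial N}$---so the proof is essentially finished after your first two paragraphs.

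Your third step is where there is a gap. Once you invoke \cref{extension_lemma}, you cannot assume the harmonic extension $\psi$ restricts exactly to $\phi_0$ on $\Sigma$, as your integral identity
\[
\int_M \bigl(|\nabla\psi|^2 + \tfrac{\scal_M}{4}|\psi|^2\bigr) + \tfrac{1}{2}\int_\Sigma (h-\mean_{\partial N})\,|\phi_0|^2 = 0
\]
implicitly requires. In the first case of the Fredholm alternative the boundary trace of the a priori extension is $\phi_0 + \psi_1$ with $\psi_1$ a component living in the strictly negative part of the spectrum of $\BdDirac - \tfrac12\mean_{\partial N}$; one only learns that $\psi_1 = 0$ \emph{after} running the Weitzenböck estimate and decomposing the boundary term accordingly, using the spectral gap. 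In the second case (which can occur precisely when $\mean_{\partial M}\equiv 0$), the parallel spinor that one obtains is \emph{not} an extension of $\phi_0$ at all, and your boundary integral formula does not apply. Both cases are handled in the proof of \cref{extension_lemma}, which is exactly why the paper cites it rather than re-running the estimate. If you want to unpack the argument, you must reproduce that case distinction and the decomposition of the boundary term into the $\phi_0$ and $\psi_1$ pieces, rather than treating $\psi|_\Sigma=\phi_0$ as given. The cleanest fix is to stop after your second paragraph and read off the conclusion directly from \cref{prop:spinor_extension}.
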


Note that, in general, it is important that a spin NNSC fill-in requires an extension of the given spin structure on \(\Sigma\) to the filled-in manifold \(M\), see also the discussion in \cref{remark:spin_fill-in} below.
However, in those cases where \(\Sigma\) only admits one spin structure---such as the main example \(\Sphere^3\) of \cref{Thm:Berger}---the theorem thus already precludes the existence of a fill-in with positive mean curvature admitting any spin structure.

Furthermore, we establish that the extension principle of \cref{extension_lemma} implies a certain comparison result in the spirit of \textcite{GoetteSemmelmann,Lott} without using index theory, see \cref{APS_lott}.

Next, we investigate a positive upper bound for a spin NNSC fill-in of a closed Riemannian spin manifold $\Sigma$ in terms of metric properties of $\Sigma$, which---in contrast to the previous results---is based on index theory.
To this end, let us start by recalling the following notion due to Gromov~\cite[Section~1]{gromov2019scalar},  \cite[Section 4.3]{Gromov_four_lectures}.

\begin{definition}\label{Def:hyperspherical radius}
    Let $(Y,g_Y)$ be a $k$-dimensional closed orientable Riemannian manifold. The \emph{hyperspherical radius} of $(Y,g_Y)$, denoted by $\rad_{\Sphere^k}(Y,g_Y)$, is the supremum of the numbers $R>0$ such that there exists a smooth $\tfrac1R$-Lipschitz map $f\colon (Y,g_Y)\to (\Sphere^k,g_{\Sphere^k})$ of non-zero degree.
\end{definition}

Gromov~\cite[Section 2]{gromov2019scalar} proved that if $(\Sigma^{n-1}, g_{\Sigma})$ is a closed Riemannian spin manifold, $h$ is a smooth function on $\Sigma$, and $M$ is a spin fill-in for $(\Sigma,g_\Sigma,h)$, then $\min_{\Sigma} h \leq (n-1)/R$, where $R$ is the hyperspherical radius of $\Sigma$.
Moreover, he conjectured~\cite[Page~3]{gromov2019scalar} that disks are rigid, that is, equality is achieved if and only if $M$ is a disk in Euclidean space.
Our second main result establishes this conjecture.

\begin{theorem}\label{Thm:Gromov rigidity}
For $n\geq2$, let $(\Sigma,g_\Sigma)$ be an $(n-1)$-dimensional closed connected Riemannian spin manifold.
Let $h\colon \Sigma\to\R$ be a smooth function, and let $(M,g)$ be a spin NNSC fill-in of $(\Sigma,g_\Sigma,h)$.
Then
\begin{equation}\label{eq:spherical_radius}
    \min_{p\in\Sigma} h(p)\leq\frac{n-1}{\rad_{\Sphere^{n-1}}(\Sigma,g_\Sigma)}.
\end{equation}
Furthermore, equality in \labelcref{eq:spherical_radius} is achieved if and only if $(M,g)$ is the round disk in Euclidean space of radius \(R = \rad_{\Sphere^{n-1}}(\Sigma,g_\Sigma)\) and $h=\frac{n-1}{R}$.
\end{theorem}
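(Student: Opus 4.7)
My plan is to prove the theorem, including the inequality \labelcref{eq:spherical_radius} (originally due to \textcite{gromov2019scalar}), via a Dirac operator comparison in the spirit of Llarull and Lott, adapted to the boundary setting through an APS-type boundary value problem. After rescaling we may assume $\rad_{\Sphere^{n-1}}(\Sigma,g_\Sigma)=1$.

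For each $\epsilon>0$, fix a smooth $(1+\epsilon)$-Lipschitz map $f_\epsilon\colon(\Sigma,g_\Sigma)\to(\Sphere^{n-1},g_{\Sphere^{n-1}})$ of non-zero degree, whose existence is guaranteed by the hypothesis on $\rad$. Pull back the complex spinor bundle of $\Sphere^{n-1}$ along $f_\epsilon$ to obtain a Hermitian bundle with connection $\mathcal{F}_\epsilon\to\Sigma$, and extend it arbitrarily as a Hermitian bundle with connection $\mathcal{E}_\epsilon\to M$; the interior extension is immaterial thanks to the APS condition below. Form the twisted Dirac operator $D_M^{\mathcal{E}_\epsilon}$ with the APS-type boundary condition associated to the self-adjoint boundary operator $D_\Sigma^{\mathcal{F}_\epsilon}$, suitably shifted by $h/2$. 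A relative-index computation, comparing $M$ with the round disk $\Disk^n$ via $f_\epsilon$, produces a non-zero kernel element $\phi$, to which the Lichnerowicz--Schr\"odinger formula with boundary term then yields
\begin{equation*}
    0 = \int_M\bigl(\lvert\nabla\phi\rvert^2 + \tfrac14\scal_M\lvert\phi\rvert^2 + \langle\mathcal{R}^{\mathcal{E}_\epsilon}\phi,\phi\rangle\bigr) + \int_\Sigma\bigl(\langle D_\Sigma^{\mathcal{F}_\epsilon}\phi,\phi\rangle - \tfrac12 h\lvert\phi\rvert^2\bigr).
\end{equation*}
Llarull's pointwise curvature identity on $\Sphere^{n-1}$ bounds the twist-curvature term $\mathcal{R}^{\mathcal{E}_\epsilon}$ in terms of the Lipschitz constant of $f_\epsilon$, and the corresponding Llarull-type spectral estimate controls $D_\Sigma^{\mathcal{F}_\epsilon}$; together with $\scal_M\geq 0$ this forces $\min_\Sigma h\leq(n-1)(1+\epsilon)$, so sending $\epsilon\to 0$ proves \labelcref{eq:spherical_radius}.

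In the equality case $\min_\Sigma h=n-1$, a subsequential limit of harmonic spinors $\phi_\epsilon$ as $\epsilon\to 0$ saturates every one of the inequalities in the above Bochner analysis. This yields $\nabla\phi=0$ on $M$ (so $(M,g)$ is Ricci-flat with a parallel spinor), $\scal_M\equiv 0$, $h\equiv n-1$, and that the differential of the limit map $f$ is a pointwise isometry. Hence $f\colon(\Sigma,g_\Sigma)\to(\Sphere^{n-1},g_{\Sphere^{n-1}})$ is a Riemannian covering of non-zero degree; by simple-connectedness of $\Sphere^{n-1}$ for $n\geq 3$ (the $n=2$ case is handled directly), $(\Sigma,g_\Sigma)$ is isometric to the round unit sphere. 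The restriction of the parallel spinor on $M$ to $\Sigma$ satisfies the generalized Killing equation associated to the second fundamental form, and comparison with the Killing spinors on the round sphere forces the boundary to be totally umbilical with principal curvatures identically $1$; combined with Ricci-flatness, a warped-product analysis gives the normal form $g=dt^2+(1-t)^2 g_{\Sphere^{n-1}}$ near $\Sigma$ and, extended by unique Ricci-flat continuation, identifies $(M,g)$ with the flat unit disk. The main obstacles I anticipate are (i) the relative-index computation establishing non-triviality of the APS kernel for the varying twist bundles $\mathcal{E}_\epsilon$, and (ii) handling the limit $\epsilon\to 0$ cleanly, since both the bundles and the spectral estimates themselves depend on $\epsilon$.
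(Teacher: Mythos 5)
Your high-level strategy---twist by a bundle coming from the map $f_\epsilon$, apply a Bochner identity with a boundary term, conclude by index theory, and extract a limiting rigid configuration as $\epsilon\to 0$---is in the same spirit as the paper, but the specific construction you propose has a serious gap and the paper actually takes a meaningfully different route at each point where you anticipate difficulty.

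The central problem is the claim that the interior extension of $\mathcal{F}_\epsilon$ is immaterial. The twist-curvature term $\langle\mathcal{R}^{\mathcal{E}_\epsilon}\phi,\phi\rangle$ appears in the \emph{interior} integral over $M$, and if $\mathcal{E}_\epsilon$ is extended arbitrarily this term is completely uncontrolled (in particular, it can be large and negative), so the Bochner estimate collapses. The APS condition only constrains the boundary trace of $\phi$; it does nothing to the interior curvature. The paper avoids this by extending not the bundle but the \emph{map}: one chooses $F_\epsilon\colon(M,\partial M)\to(\Disk_R^n,\partial\Disk_R^n)$ with $F_\epsilon|_{\partial M}=f_\epsilon$. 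Because $\Disk_R^n\subset\R^n$ is flat, the twist curvature then vanishes identically on all of $M$, and the interior term drops out; this is exactly why \cref{example:domain} and \cref{main_integral_inequality} are set up for Euclidean target domains. A second structural payoff of targeting a domain $\Omega\subset\R^n$ is that $T\Omega$ is trivial, so the Dirac bundle $S=\ReducedSpinBdl(\T M\oplus\R^n)$ and the interior operator $\Dirac$ do not depend on $f_\epsilon$ at all---only the boundary condition does. This is what makes your anticipated obstacle (ii) tractable: all the harmonic spinors $\Psi_i$ live in one fixed $\SobolevW^{1,2}(M,S)$, so Rellich compactness gives a genuine subsequential limit, which cannot work if the bundle $\mathcal{E}_\epsilon$ changes with $\epsilon$.

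The paper also uses a \emph{local} boundary condition $\clm(\nu_M)\Psi=\clms(\nu_\Omega)\Psi$ rather than an APS condition shifted by $h/2$, which yields a clean index formula $\ind(\Dirac,1)=\deg(f)$ (\cref{the_index_theorem}), removing your anticipated obstacle (i). Finally, your rigidity analysis is also not quite right as sketched: in the equality case one does not obtain that $\Sigma$ is isometric to the round sphere directly and then argue by warped products. Rather, the paper first proves an almost-rigidity statement (\cref{almost_rigidity_theorem}) showing the boundary maps $\SobolevW^{1,p}$-converge to a smooth isometry $\phi\colon\partial M\to\partial\Omega$ matching second fundamental forms, and then a separate result (\cref{cor:flat}) showing that matching boundary geometry plus $\scal_M\geq 0$ and the index condition forces $M$ to be flat---this is done by constructing a parallel orthonormal frame out of the parallel twisted spinor via $\langle\xi_i,\blank\rangle=\langle\clms(\bar e_i)\Psi,\clm(\blank)\Psi\rangle$, not by a warped-product or Ricci-flat continuation argument. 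You would need to reconstruct something equivalent to both of these steps; the warped-product argument near the boundary does not by itself control the global geometry of a Ricci-flat $M$.
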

One wishes to apply a scalar- and mean curvature rigidity result in the spirit of \textcite{GoetteSemmelmann,Lott}, for flatness, see specifically \textcite[Theorem 3.2]{wang2023gromovs}, to a map \(\Sigma \to \Sphere^{n-1}\) realizing the hyperspherical radius.
However, due to its definition in terms of a supremum, \emph{a priori} we do not have a \emph{smooth} map precisely realizing the hyperspherical radius.
We solve this issue by establishing the following almost rigidity statement for maps to a fixed bounded strictly convex smooth domain in Euclidean space from a fixed spin NNSC manifold with boundary, which is also of independent interest.

\begin{theorem}\label{Thm:Llarull intro}
    For $n\geq3$, let \(\Omega \subseteq \R^n\) be a bounded domain with smooth strictly convex boundary.
    Let \(M\) be a connected compact Riemannian spin manifold with boundary such that \(\scal_M \geq 0\). 
    Fix \(p \in [1,\infty)\).
    Then for every \(\varepsilon > 0\) there exists a \(\delta=\delta(M,\Omega,p,\varepsilon) > 0\) such that the following holds:
    
    For every smooth map \(f \colon \partial M \to \partial \Omega\) satisfying
    \begin{itemize}
        \item \(\Lip(f) \leq 1+\delta\),
        \item \(\mean_{\partial M} \geq \mean_{\partial \Omega} \circ f - \delta\),
        \item \(\deg(f) \neq 0\),
    \end{itemize}
    there exists an isometry \(\phi \colon \partial M \to \partial \Omega\) with \(\II_{\partial M} =  \phi^\ast \II_{\partial \Omega}\) such that \(f\) is \(\varepsilon\)-close to \(\phi\) in \(\SobolevW^{1,p}(\partial M, \R^n)\).
    Moreover, in this case \(M\) is flat and isometric to \(\Omega\).
\end{theorem}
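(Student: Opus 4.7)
The plan is to reduce the almost rigidity statement to the exact rigidity case ($\delta=0$) by a compactness and contradiction argument. For the exact case I intend to invoke a Llarull-type comparison theorem: if $(M,g)$ is as in the hypothesis and $f\colon\partial M\to\partial\Omega$ is a smooth $1$-Lipschitz map of nonzero degree with $\mean_{\partial M}\geq\mean_{\partial\Omega}\circ f$, then $M$ is flat and isometric to $\Omega$ through an isometry whose restriction to $\partial M$ is $f$. Such a statement is established by a spinorial index-theoretic argument in the spirit of Goette--Semmelmann and Lott: one sets up an APS boundary value problem for a Dirac operator on $M$ twisted by the pull-back of the trivial tangent bundle of $\R^n$ along $f$, uses the strict convexity of $\partial\Omega$ together with the Lichnerowicz--Schr\"odinger formula and the APS boundary term to force a twisted parallel spinor, and reads off the isometry from the algebraic structure of this spinor. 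I expect this exact rigidity to be developed in an earlier section of the paper, as the proof of \cref{Thm:Gromov rigidity} is said to rest on results of this flavor.

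Granted the exact rigidity, suppose the conclusion of \cref{Thm:Llarull intro} fails. Fix $\varepsilon_0>0$ and a sequence $\{f_k\}$ of smooth maps $f_k\colon\partial M\to\partial\Omega$ satisfying the hypotheses with $\delta=1/k$ such that no isometry $\phi\colon\partial M\to\partial\Omega$ with $\II_{\partial M}=\phi^\ast\II_{\partial\Omega}$ is $\varepsilon_0$-close to $f_k$ in $\SobolevW^{1,p}$. Since $\partial\Omega$ is bounded and the $f_k$ are uniformly Lipschitz with constant at most $2$, Arzel\`a--Ascoli yields a subsequence converging uniformly to a $1$-Lipschitz map $f_\infty\colon\partial M\to\partial\Omega$. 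The Brouwer degree of a continuous map from a closed manifold is stable under uniform limits, so $\deg(f_\infty)\neq 0$, and the pointwise mean-curvature inequality $\mean_{\partial M}\geq\mean_{\partial\Omega}\circ f_\infty$ passes to the limit by continuity of $\mean_{\partial\Omega}$.

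Applying the exact rigidity to $f_\infty$ (see the obstacle below for the regularity issue), I obtain that $(M,g)$ is flat and isometric to $\Omega$ and that $\phi:=f_\infty$ is a smooth isometry $\partial M\to\partial\Omega$; it automatically pulls back the second fundamental form because the ambient isometry $M\to\Omega$ carries $\partial M$ to $\partial\Omega$ isometrically. It remains to upgrade $f_k\to\phi$ from uniform to $\SobolevW^{1,p}$-convergence. The derivatives $df_k$ are uniformly $L^\infty$-bounded by $1+1/k$, while $|d\phi|\equiv 1$. Weak compactness provides $df_k\rightharpoonup d\phi$ in every $L^q$ with $1<q<\infty$, the limit being identified by the uniform convergence of the $f_k$. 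Since $|df_k|\to 1$ in $L^\infty$ while $|d\phi|\equiv 1$, one has $\|df_k\|_{L^p}\to\|d\phi\|_{L^p}$, and uniform convexity of $L^p$ then promotes the weak convergence to strong $L^p$-convergence for $1<p<\infty$; the case $p=1$ follows from finiteness of the measure on $\partial M$ and H\"older's inequality. This contradicts the defining property of the sequence.

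The main obstacle is that the rigidity theorem I wish to invoke is most naturally proved for smooth maps $f$, whereas the limit $f_\infty$ is only Lipschitz. I intend to address this either by verifying that the spinorial proof is robust under the milder assumption $f\in\SobolevW^{1,\infty}$---only the pointwise bound $|df_\infty|\leq 1$ and the pointwise mean-curvature comparison actually enter the decisive Lichnerowicz-type identity and APS boundary term---or by regularizing $f_\infty$ through maps that nearly satisfy the hypotheses, using the strict convexity of $\partial\Omega$ to project mollifications back to $\partial\Omega$ with controlled Lipschitz and mean-curvature defects, and then iterating the compactness step. Either way, this is the technical core of the argument.
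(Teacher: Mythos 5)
Your reduction to an exact rigidity statement for a \emph{Lipschitz} map $f_\infty \colon \partial M \to \partial\Omega$ is exactly the obstacle the paper explicitly points out in the introduction and deliberately circumvents, and neither of your two proposed patches actually closes the gap. The regularization route is circular: mollifying $f_\infty$ and projecting back to $\partial\Omega$ produces another sequence of smooth maps with Lipschitz constants $\leq 1 + o(1)$ and mean-curvature defect $\to 0$, and \enquote{iterating the compactness step} then returns a Lipschitz limit again, so you are back where you started. The $\SobolevW^{1,\infty}$-robustness route is the real technical problem and cannot be waved through in a sentence: the spinorial setup in \cref{subsec:spin_maps} (the twisted bundle $S$, its connection, the boundary operator $\BdDirac$, and the boundary condition $\clm(\nu_M)\psi = \clms(\nu_N)\psi$) all depend on $\D f$, so a merely Lipschitz $f$ yields rough coefficients, and the integration by parts and trace arguments underpinning \labelcref{eq:schroedinger_lichnerowicz} and the APS theory require care in this regularity class. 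The paper cites the substantial separate effort of \textcite{cecchini2023lipschitz} for a related Lipschitz rigidity in the Llarull setting precisely because this is nontrivial; here the authors prove \cref{Thm:Llarull intro} for \emph{smooth} $f_i$ directly (via the quantitative integral inequality of \cref{boundary_rigidity}, which yields $\nabla\Psi_i\to 0$, $\D f_i - U_i \to 0$, and a Clifford-defect $\to 0$ in $\Lp^2$), identify a smooth parallel spinor $\Psi$ in the limit, and then \emph{deduce} that the limit boundary map is smooth from the smooth bundle embedding $T \mapsto \clms(T\,\blank)\Psi$; the Lipschitz statement (\cref{cor:Lipschitz}) is a \emph{consequence} of \cref{Thm:Llarull intro}, not an ingredient in it. So the logical dependency in your proposal is the reverse of the one the paper establishes, and the hard part is missing.

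A smaller but real imprecision: you write that $|\D f_k| \to 1$ in $\Lp^\infty$, but the Lipschitz hypothesis only gives the one-sided bound $|\D f_k|_{\op} \leq 1 + 1/k$; nothing forces $|\D f_k|$ to be close to $1$ from below on sets of positive measure, \emph{a priori}. The Radon--Riesz upgrade to strong $\Lp^p$-convergence can still be salvaged---combine weak lower semicontinuity $\|\D\phi\|_{\Lp^p} \leq \liminf \|\D f_k\|_{\Lp^p}$ with the upper bound $\|\D f_k\|_{\Lp^p} \leq (1+1/k)\|\D\phi\|_{\Lp^p}$ coming from the Lipschitz control and $|\D\phi|\equiv 1$, using a Euclidean (Hilbert--Schmidt) fiber norm so that $\Lp^p$ of sections is uniformly convex---but the reasoning as stated is not correct, and in any case this step only matters if the Lipschitz rigidity were already available.
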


The proof of this theorem is based on carefully analyzing the fundamental spinorial integral inequality established in \cref{main_integral_inequality}, which can be interpreted as a version of a result of Lott \cite[Theorem 1.1]{Lott} \enquote{with coefficients} in the spirit of Listing \cite{listing2010scalar}, a special case of which is also used by Brendle in \cite{brendle2024scalar}.
The existence of spinors used in this theorem is provided by index theory which in this case works regardless of the dimension's parity (see \cref{appendix_index_theorem}).

Another possible approach to \cref{Thm:Gromov rigidity} would be to first extract a convergent subsequence from a sequence of maps almost realizing the hyperspherical radius and then prove rigidity directly for the limiting Lipschitz map.
Then the difficulty would lie in the fact that this map is \emph{a priori} not necessarily smooth.
Such an alternative approach to \cref{Thm:Gromov rigidity} was developed, after the initial appearance of the present manuscript, by \textcite{baer2024diraceigenvalueshypersphericalradius}, who formulated the result in terms of Dirac eigenvalues.

In the situation of Llarull's theorem, a related low regularity Lipschitz rigidity result was recently established by \textcite{cecchini2023lipschitz}, where also singular metrics of $\SobolevW^{1,p}$-regularity ($p>n$) are considered.
Also note that \cref{Thm:Llarull intro} immediately implies a rigidity result for low-regularity Lipschitz maps with smooth metrics as stated in the next corollary.
However, \cref{Thm:Llarull intro} provides stronger control than merely a Lipschitz rigidity result because using an a priori compactness argument only yields \(\Ct^{0,\alpha}\)-subconvergence to a Lipschitz map rather than in \(\SobolevW^{1,p}\).
\begin{corollary}\label{cor:Lipschitz}
    For $n\geq3$, let \(\Omega \subseteq \R^n\) be a bounded, strictly convex domain with smooth boundary.
    Let \(M\) be a connected compact Riemannian spin manifold with boundary such that \(\scal_M \geq 0\). 
    If $f\colon\partial M\to\partial\Omega$ is a \embparen{not necessarily smooth} 1-Lipschitz map of non-zero degree such that \(\mean_{\partial M} \geq \mean_{\partial \Omega} \circ f\), then $f$ is an isometry and $M$ is flat and isometric to $\Omega$.
\end{corollary}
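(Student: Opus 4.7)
The plan is to reduce \cref{cor:Lipschitz} to \cref{Thm:Llarull intro} by approximating the Lipschitz map $f$ by smooth maps $f_k\colon\partial M\to\partial\Omega$ that asymptotically satisfy its three bulleted hypotheses with $\delta_k\to 0$, and then recovering $f$ as the uniform limit of the resulting near-isometries.

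For the approximation, first view $f$ as a Lipschitz map into $\R^n$ and smooth it componentwise on the compact manifold $\partial M$ by any standard device (for example the heat semigroup; the Bakry--\'Emery estimate gives $\Lip(e^{t\Delta}f)\le e^{Kt}$ for $K$ a Ricci lower bound on $\partial M$). This produces smooth $\tilde f_k\colon\partial M\to\R^n$ with $\tilde f_k\to f$ uniformly and $\Lip(\tilde f_k)\to 1$. Postcomposing with the nearest-point projection $\pi\colon V\to\partial\Omega$ from a tubular neighborhood $V\subseteq\R^n$ of $\partial\Omega$ lands the image back in $\partial\Omega$: $\pi$ is smooth since $\partial\Omega$ is smooth, and its differential at points of $\partial\Omega$ is the orthogonal projection to the tangent space, so $\Lip(\pi)$ on shrinking neighborhoods of $\partial\Omega$ tends to $1$. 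Setting $f_k:=\pi\circ\tilde f_k$ yields smooth maps $\partial M \to \partial\Omega$ with $f_k\to f$ uniformly and $\Lip(f_k)\to 1$. The mean-curvature inequality then holds asymptotically because $\mean_{\partial\Omega}$ is smooth on $\partial\Omega$ and $f_k\to f$ uniformly, so $\mean_{\partial\Omega}\circ f_k\to \mean_{\partial\Omega}\circ f\le \mean_{\partial M}$ uniformly; and $\deg(f_k)=\deg(f)\ne 0$ since $f_k$ is $\Ct^0$-close to $f$ and therefore homotopic to it. Fixing $p>n-1$ so that $\SobolevW^{1,p}(\partial M,\R^n)\hookrightarrow\Ct^0(\partial M,\R^n)$ and applying \cref{Thm:Llarull intro} with $\varepsilon_k\to 0$ gives that $M$ is flat and isometric to $\Omega$, and produces isometries $\phi_k\colon\partial M\to\partial\Omega$ with $\|f_k-\phi_k\|_{\SobolevW^{1,p}}<\varepsilon_k$.

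To conclude that $f$ itself is an isometry, I would extract via Arzel\`a--Ascoli a subsequence of $\{\phi_k\}$ converging uniformly to an isometry $\phi\colon\partial M\to\partial\Omega$. The Sobolev embedding upgrades the $\SobolevW^{1,p}$-bound to $\|f_k-\phi_k\|_{\Ct^0}\to 0$, which combined with $f_k\to f$ uniformly forces $f=\phi$, finishing the argument. I expect the main technical point to be keeping $\Lip(f_k)$ close to $1$ while simultaneously projecting back onto $\partial\Omega$; quantitatively this reduces to bounding $d\pi$ on a shrinking tubular neighborhood, where smoothness of $\partial\Omega$ suffices and strict convexity ensures a clean sided collar on which $\pi$ is a well-defined smooth retraction.
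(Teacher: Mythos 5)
Your proposal is correct and takes essentially the same route as the paper: mollify $f$ into $\R^n$, project back to $\partial\Omega$ along a tubular neighborhood to keep Lipschitz constants close to $1$, apply \cref{Thm:Llarull intro} to get $\SobolevW^{1,p}$-proximity to isometries, then use the Sobolev embedding into $\Ct^0$ to conclude that $f$ is a $\Ct^0$-limit of isometries and hence an isometry. The only cosmetic difference is that you extract a convergent subsequence of the $\phi_k$ via Arzelà--Ascoli, whereas the paper invokes directly that the set of isometries $\partial M \to \partial\Omega$ is closed in $\Ct^0$; and your choice $p > n-1$ is the sharp threshold for the embedding on the $(n-1)$-dimensional $\partial M$, where the paper takes the slightly stronger $p>n$.
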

In the case when the map $f$ in  \cref{cor:Lipschitz} is \emph{smooth}, its statement already follows from results of \textcite[Theorem 3.2]{wang2023gromovs}, see also \cite[Theorem~1.1]{wang2022rigidity}.

Finally, we show in \cref{S:PMT} that the same techniques---that is, an index-theoretic existence result of a spinor and the integral inequality from \cref{main_integral_inequality}---also imply a Witten-type integral formula \cite{witten1981new} for the ADM-mass of an asymptotically Schwarzschild manifold.
Interestingly, unlike Witten's approach, this formula does not use non-negative scalar curvature, and it is still valid when the underlying manifold is only asymptotically Schwarzschild up to first order instead of second order.
In particular, this gives yet another proof of the Riemannian positive mass theorem and thereby addresses another question of \textcite[3]{gromov2019scalar} on the relation between the theorems of Llarull, Goette--Semmelmann and Lott~\cite{Llarull,GoetteSemmelmann,Lott} and the positive mass theorem.

The paper is organized as follows.
In \cref{sec:notation}, we recall the basic notation and conventions on spinor bundles and Dirac operators.
In \cref{sec:nnsc_fillin_positive_mean}, we prove the extension result for spinors (\cref{extension_lemma}) and use it to deduce \cref{Thm:Berger,Thm:ricci_flat_extremality} and the comparison result à la Lott (\cref{APS_lott}).
\cref{sec:integral_formula} is dedicated to proving the fundamental integral inequality (\cref{main_integral_inequality}) needed for the remaining results.
In \cref{sec:almost rigidity}, we establish \cref{Thm:Llarull intro}, from which we derive \cref{Thm:Gromov rigidity} and \cref{cor:Lipschitz}.
Finally, in \cref{S:PMT}, we discuss the application to asymptotically flat manifolds.

\subsection*{Acknowledgements}
SC and RZ gratefully acknowledge the hospitality of the IAS, where this work was initiated.
The authors also thank Thomas Schick for helpful comments and his interest in this work.

\section{Notation and conventions}\label{sec:notation}
\subsection{The boundary Dirac operator and Weitzenböck formula}\label{subsec:boundary_dirac}
Let \(M\) be a Riemannian manifold and \(S \to M\) a Dirac bundle in the sense of Gromov--Lawson~\cites[\S 1]{GL1983}[Chapter II \S 5]{LawsonMichelsohn} (for instance the spinor bundle if \(M\) is a spin manifold).
We denote by \(\clm\) the Clifford multiplication and by \(\Dirac = \sum_{i=1}^n \clm(e_i)\nabla_{e_i} \) the Dirac operator on \(S\).
We will study boundary value problems in this context, for the general theory of which we refer to \textcite{Baer-Ballmann:Boundary-value-problems-first-order,Baer-Ballmann:Guide-Boundary-value-problems-Dirac}.
To this end, we denote the \emph{interior} unit normal field of \(\partial M\) by \(\nu_M\) and we denote by $\mean_{\partial M}$ the mean curvature with respect to $\nu_M$. 
Next, we fix the \emph{boundary Dirac operator}
\begin{equation}
  \BdDirac \coloneqq \sum_{i=1}^{n-1} \clm^{\partial}(e_i) \nabla^\partial_{e_i} = \frac{1}{2} \mean_{\partial M} - \clm(\nu_M) \Dirac - \nabla_{\nu_M} \colon \Ct^\infty(\partial M, S^\partial) \to \Ct^\infty(\partial M, S^\partial), \label{eq:boundary_dirac}
\end{equation}
where \(S^\partial = S|_{\partial M}\), \(\clm^\partial(\omega) = \clm(\omega)\clm(\nu)\), \(\nabla^\partial_\xi = \nabla_\xi + \frac{1}{2} \clm^\partial(\nabla_\xi \nu)\).
This operator \(\BdDirac\) can be used as a canonical adapted operator on the boundary, see \cite[Appendix~1]{Baer-Ballmann:Guide-Boundary-value-problems-Dirac} to study boundary value problems for \(\Dirac\).
Note that \(\BdDirac \clm(\nu_M) = - \clm(\nu_M) \BdDirac\).

We then have the integrated Bochner--Lichnerowicz--Weitzenböck formula for \(\Psi\in\Ct^\infty(M, S)\),
\begin{align}\label{eq:schroedinger_lichnerowicz}
    \| \Dirac \Psi \|_{\Lp^2(M)}^2 = \| \nabla \Psi \|_{\Lp^2(M)}^2 &+ \left(\Curv^S \Psi, \Psi\right)_{\Lp^2(M)} \\
    &+ \int_{\partial M} \frac{1}{2} \mean_{\partial M} |\Psi|^2 - \langle \BdDirac \Psi, \Psi \rangle \dS \nonumber
\end{align}
see \cite[Appendix 1, equation (27)]{Baer-Ballmann:Guide-Boundary-value-problems-Dirac}.
Here $\Curv^S$ is given by
\begin{equation*}
    \Curv^S = \sum_{i < j} \clm(e_i) \clm(e_j) \mathrm{R}^{S}_{e_i, e_j}.
\end{equation*}
In the case that \(M\) is spin and \(S\) its spinor bundle, we have \(\Curv^S = \frac{\scal}{4}\), in which case the formula \labelcref{eq:schroedinger_lichnerowicz} is called the integrated \emph{Schrödinger--Lichnerowicz formula}.

\subsection{Generalized APS boundary conditions}\label{subsec:APS}
We need to impose boundary conditions to work with the Dirac operator on manifolds with boundary.
Given any self-adjoint first-order operator \(A\) on the boundary adapted to \(\Dirac\) in the sense of \cite[\S 3.2]{Baer-Ballmann:Guide-Boundary-value-problems-Dirac}, which might not necessarily be the canonical adapted operator described in \cref{subsec:boundary_dirac} above, we can define the corresponding (generalized) \textcite{APS} (APS) boundary condition by imposing on sections \(\Psi \in \SobolevH^{1}(M, S)\) that 
\[
\chi_{[0,\infty)}(A)\left(\Psi|_{\partial M}\right) = 0,
\]
where \(\chi_{[0,\infty)}(A)\) is the \(\Lp^2\)-orthogonal projection onto the non-negative part of the spectrum of \(A\).
This is always an elliptic boundary condition~\cite[Example~4.21]{Baer-Ballmann:Guide-Boundary-value-problems-Dirac}.

In \cref{sec:nnsc_fillin_positive_mean} we will apply this to the case \(A = \BdDirac + h\), where \(h \in \Ct^\infty(\partial M, \R)\) is some smooth function on the boundary and \(\BdDirac\) is the canonical adapted operator.
The adjoint condition of \(\chi_{[0,\infty)}(\BdDirac + h)\left(\Psi|_{\partial M}\right) = 0\) is given by
\[
\chi_{(0,\infty)}(\BdDirac - h)\left(\Psi|_{\partial M}\right) = 0,
\]
compare~\cite[\S 4.3]{Baer-Ballmann:Guide-Boundary-value-problems-Dirac}, where \(\chi_{(0,\infty)}\) denotes the \(\Lp^2\)-orthogonal projection onto the positive part of the spectrum. 
Note the subtle but important difference between non-negative and positive parts of the spectrum as well as the change in sign in front of the function \(h\).

\subsection{Spin maps and local boundary conditions}\label{subsec:spin_maps}
In this subsection, we set up another boundary value problem, namely the one we need for the proof of~\cref{Thm:Gromov rigidity}.
This is a conceptual elaboration of the approach used by \textcite{Lott} that has also been used by several other authors in recent times \cite{brendle2023rigidity, brendle2024scalar, wang2023gromovs}. 

Let \(f \colon (M, \partial M) \to (N, \partial N)\) be a smooth \emph{spin map} between Riemannian manifolds with boundary of the same dimension \(n\).
Being a spin map means that \(w_1(\T M) = f^\ast w_1( \T N)\) and \(w_2(\T M) = f^\ast w_2( \T N)\), or equivalently that the bundle \(\T M \oplus f^\ast \T N\) endowed with the pseudo-Riemannian bundle metric \(g_M \oplus (-g_N)\) admits a spin structure.\footnote{If both \(M\) and \(N\) are oriented, this is by coincidence equivalent to \(\T M \oplus f^\ast \T N\) having a spin structure also with respect to a positive definite bundle metric.
But in general the pseudo-Riemannian perspective is the topologically appropriate point of view, compare also~\textcite[\S 1.2 and \S 4]{tony2024scalar}, and it avoids having to introduce factors of \(\sqrt{-1}\) at some places in our proofs.}
We say that a \emph{spin structure on \(f\)} is a spin structure on \((\T M \oplus f^\ast \T N, g_M \oplus (-g_N))\).

Recall that the Clifford algebra \(\Cl_{n,n} = \Cl(\R^n \oplus \R^n, (\delta, -\delta))\) has a canonical irreducible representation \(\mathbf{c} \colon \Cl_{n,n} \xrightarrow{\cong} \End(\bigwedge \R^n)\) which is generated by the standard Clifford actions on the exterior algebra
\begin{align}
    \textbf{c}(v,0) \alpha &\coloneqq \clm(v) \alpha \coloneqq v \wedge \alpha - \ins_v \alpha, \label{eq:differential_forms_clifford}\\
    \textbf{c}(0,v) \alpha &\coloneqq \clms(v) \alpha \coloneqq v \wedge \alpha + \ins_v \alpha.\nonumber
\end{align}
Let \(S\) be the complexified spinor bundle associated to a chosen spin structure on \(\T M \oplus f^\ast \T N\) and the canonical representation \(\textbf{c}\), that is, 
\[S = \left(\mathrm{P}_{\mathrm{K}_{n,n}}(\T M \oplus f^\ast \T N) \times_{\mathrm{K}_{n,n},\textbf{c}} \bigwedge  \R^n \right) \otimes \C,\]
where \(\mathrm{K}_{n,n} \subset \Spin_{n,n} \subset \Cl_{n,n}^\times\) denotes the maximal compact subgroup covering \(\mathrm{S}(\Orth_n \times \Orth_n) \subset \SO_{n,n}\).
Note that \(S\) admits two different Clifford actions, one for vector fields \(\xi \in \Vfds(M)\) on \(M\) which we denote by \(\clm(\xi) = \textbf{c}(\xi, 0) \in \End(S)\) and the other for vector fields \(\eta \in \Vfds(N)\) on \(N\) which we denote by \(\clms(\eta) = \textbf{c}(0, f^\ast \eta) \in \End(S)\).
By construction, the usual Clifford algebra relations hold which we spell out in the following for convenience of the reader.
\begin{gather*}
    \clm(\xi)^2 = - |\xi|^2, \quad \clms(\eta)^2 = |\eta|^2, \quad \clm(\xi) \clms(\eta) = -\clms(\eta) \clm(\xi),\\
    \clm(\xi)^\ast = -\clm(\xi), \quad \clms(\eta)^\ast = \clms(\eta).
\end{gather*}

With respect to the Clifford action of \(M\), the bundle \(S\) is a Dirac bundle in the sense of Gromov--Lawson and we have the Dirac operator \(\Dirac = \sum_{i=1}^n \clm(e_i) \nabla_{e_i}\).
In this case, the interior curvature term \(\Curv^S\) in the Bochner--Lichnerowicz--Weitzenböck formula \labelcref{eq:schroedinger_lichnerowicz} is explicitly given by \(\Curv^S = \frac{\scal}{4} + \Curv^N\), where
\begin{equation}
    \Curv^N = -\frac{1}{2}\sum_{i < j} \clm(e_i \wedge e_j) \clms\left(\mathrm{R}^{\T N}(\D{f}(e_i) \wedge \D{f}(e_j))\right). \label{eq:twisting_curvature}
\end{equation}

Again we need to impose suitable boundary conditions.
In addition to APS-type conditions, in this case there are specific local boundary conditions well-adapted to the situation at hand.
To describe these, let \(\nu_M\) and \(\nu_N\) denote the interior unit normals of the boundaries of \(M\) and \(N\), respectively, and let \(s \colon \partial M \to \{ \pm 1 \}\) be a choice of sign for each connected component of \(\partial M\).
Then we may impose the local boundary condition
\begin{equation}
\clm(\nu_M) \psi = s \clms(\nu_N) \psi \quad \text{on \(\partial M\).}   \label{eq:boundary_condition}
\end{equation}
Rewriting this as \(\chi(\psi) = s \psi \) with \(\chi = \clms(\nu_N) \clm(\nu_M)\) (and using \(\chi \clm(\nu_M) = -\clm(\nu_M) \chi\) as well as \cref{boundary_dirac_commutation} below), one verifies that this is a self-adjoint elliptic boundary condition.
Note that for \(s = 1\), this corresponds to the boundary condition used by Lott~\cite{Lott}.
Moreover, the bundle \(S\) inherits a \(\Z/2\)-grading \(S = S^+ \oplus S^-\) from the standard even/odd grading on \(\bigwedge \R^n\) with respect to which \(\Dirac\) is odd.
Since \(\chi\) preserves the \(\Z/2\)-grading, we can thus define the index 
\[
    \ind(\Dirac, s) = \dim \ker(\Dirac^+, s) - \dim \ker(\Dirac^-, s),   
\]
where 
\[\Dirac = \begin{pmatrix} 0 & \Dirac^- \\ \Dirac^+ & 0 \end{pmatrix}\]
with respect to the decomposition \(S = S^+ \oplus S^-\) and we take the kernel on smooth (or equivalently \(\SobolevH^1\)-) sections satisfying the boundary condition \labelcref{eq:boundary_condition}.
A similar setup was considered in \cite[\S 2--3]{scalar_mean_dirac}, compare also \cite[Appendix B]{BaerBrendleHankeWang}.

We consider two standard examples.
The first is that this reduces to differential forms in case \(M = N\).

\begin{example}\label{example:identity}
    Let \(M = N\) be the same Riemannian manifold and \(f = \id\).
    Then \(\T M \oplus f^\ast(\T M) = \T M \oplus \T M\) and this bundle admits a canonical spin structure with respect to the bundle metric \(g_M \oplus (-g_M)\).
    The exterior algebra \(\bigwedge \T{M}\) is itself a fiberwise irreducible bundle of \(\Cl(\T M \oplus \T M, g_M \oplus (-g_M))\)-modules, so we may just take \(S = \bigwedge \T{M} \otimes \C\) and, in light of \labelcref{eq:differential_forms_clifford}, \(\Dirac = d + d^\ast\) is the de Rham operator on complex-valued differential forms.
    By the same token, for \(s = 1\) the boundary condition \labelcref{eq:boundary_condition} then becomes \(\ins_{\nu_M} \alpha = 0\)---in other words, absolute boundary conditions.
    In particular, \(\ker(\Dirac, 1)\) is isomorphic to the de Rham cohomology of \(M\) and so \(\ind(\Dirac, 1) = \chi(M)\).
    Similarly, for \(s = -1\) it means \(\nu_M \wedge \alpha = 0\)---that is, relative boundary conditions.
    For a general sign choice \(s\), it would mean using absolute boundary conditions on some boundary components and relative on the others.    
\end{example}

\begin{convention}
In the remaining part of this paper, we will assume that every domain in Euclidean space is bounded and has smooth boundary.
\end{convention}
The second example is where \(N = \Omega \subset \R^n\) is a convex domain, which is the case we will use in the proof of \cref{Thm:Gromov rigidity}.

\begin{example}\label{example:domain}
    Let \(N = \Omega \subset \R^n\) be a convex domain.
    Then \(\T \Omega = \Omega \times \R^n\) is the trivial bundle and hence \(\T M \oplus f^\ast \T \Omega = \T M \oplus \R^n\).
    In this case, the condition of \(f\) being spin is equivalent to \(M\) admitting a spin structure.
    Moreover, given a spin structure on \(M\) with its principal \(\Spin_n\)-bundle, we then obtain
    \[
      S = \left( \mathrm{P}_{\Spin_n}(\T M) \times_{\Spin_n, \clm} \bigwedge \R^n \right) \otimes \C.
    \]
    Note that as a \(\Cl_{n,0}\)-module the exterior algebra \(\bigwedge \R^n\) is not irreducible and it splits into multiple irreducible submodules.
    For instance, if \(n\) is even, we can identify \(S\) with \(\ReducedSpinBdl_M \otimes \ReducedSpinBdl_{\R^n}\), where \(\ReducedSpinBdl\) refers to the usual complex spinor bundles.
    The latter being trivial, we can further observe that \(S \cong \bigoplus_{i=1}^m \ReducedSpinBdl_M\), where $m = \dim \ReducedSpinBdl_{\R^n} = 2^{\frac{n}{2}}$, so a section of \(S\) can be identified with a system of spinors on \(M\)---this is essentially the point of view taken by \textcite{brendle2024scalar}.
    However, we will not make use of this observation and instead work with the abstract construction of the bundle, which works regardless of the dimension's parity.
    
    We also note that the bundle \(S\) and its Dirac operator \(\Dirac\) do not depend on the map \(f\) or the domain \(\Omega\) at all, but the boundary condition \labelcref{eq:boundary_condition} does.
    We then have 
    \[\ind(\Dirac, 1) = \deg(f).\]
    In the even-dimensional case, this is a consequence of \textcite{Lott}, but again there is no reason to restrict to the even-dimensional case and we provide a direct argument of this index formula in \cref{appendix_index_theorem} independently of the dimension's parity, see \cref{the_index_theorem}.
\end{example}

\section{NNSC fill-ins with non-negative mean curvature}
\label{sec:nnsc_fillin_positive_mean}

In this section, we answer Miao's question \cite[Question 2]{miao2021nonexistence} restricted to spin fill-ins, establish fill-in extremality of spin domains admitting a parallel spinor, and establish a comparison result in the spirit of \textcite{Lott}.
All of these results rely on the following general extension lemma. 

\begin{lemma}\label{extension_lemma}
    Let \(M\) be a compact connected Riemannian manifold with boundary endowed with a Dirac bundle \(S \to M\) such that \(\Curv^S \geq 0\) on \(M\). 
    Let \(h \colon \partial M \to [0,\infty)\) be a smooth non-negative function and let \(\psi_0 \in \Ct^\infty(\partial M, S)\) be a non-zero section satisfying \(\BdDirac \psi_0 = \frac{1}{2} h \psi_0\).
    Furthermore, assume that \(\mean_{\partial M} \geq h\).
    Then the bundle \(S \to M\) always admits a non-zero parallel section \(\Psi\) satisfying \(\Curv^S\Psi = 0\), and \(\mean_{\partial M} = h\).
    Moreover, if \(\mean_{\partial M} \not\equiv 0\), then \(\psi_0\) itself extends to a parallel section \(\Psi\) of \(S \to M\) satisfying \(\Curv^S\Psi = 0\).
\end{lemma}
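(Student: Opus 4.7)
The plan is to find a section $\Psi \in \Ct^\infty(M, S)$ with $\Dirac \Psi = 0$ extending $\psi_0$ in an appropriate sense, and then apply the integrated Schrödinger--Lichnerowicz-type formula~\labelcref{eq:schroedinger_lichnerowicz} to conclude parallelism. Set $A := \BdDirac - \tfrac{1}{2}h$; this is a self-adjoint Dirac-type operator on $\partial M$ adapted to $\Dirac$, and $\psi_0 \in \ker A$ by hypothesis. I consider the generalized APS boundary value problem
\begin{equation*}
   \Dirac \Psi = 0 \text{ on } M, \qquad \chi_{[0,\infty)}(A)\bigl(\Psi|_{\partial M}\bigr) = \psi_0 \text{ on } \partial M.
\end{equation*}
According to the theory of elliptic boundary value problems of first order~\cite{Baer-Ballmann:Boundary-value-problems-first-order,Baer-Ballmann:Guide-Boundary-value-problems-Dirac} and the discussion in \cref{subsec:APS}, this is Fredholm. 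The Fredholm alternative yields a dichotomy: either (i) a smooth solution $\Psi$ exists, or (ii) there is a nonzero $\Phi \in \Ct^\infty(M,S)$ with $\Dirac \Phi = 0$ satisfying the dual APS boundary condition $\chi_{(0,\infty)}(B)\bigl(\Phi|_{\partial M}\bigr) = 0$, where $B := \BdDirac + \tfrac{1}{2}h$.

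In case (i), I apply \labelcref{eq:schroedinger_lichnerowicz} to $\Psi$ and decompose $\Psi|_{\partial M} = \psi_0 + \psi_-$ with $\psi_-$ in the strictly negative spectral part of $A$. Since $\BdDirac = A + \tfrac{1}{2}h$ and $\psi_0 \in \ker A$ is $\Lp^2$-orthogonal to $\psi_-$, the boundary integrand simplifies to
\begin{equation*}
   \tfrac{1}{2}(\mean_{\partial M} - h)|\Psi|_{\partial M}|^2 - \langle A \psi_-, \psi_- \rangle.
\end{equation*}
Under the hypotheses $\Curv^S \geq 0$, $\mean_{\partial M} \geq h \geq 0$, and because $A$ is strictly negative on the spectral support of $\psi_-$, all four summands in the integrated formula are non-negative; therefore each vanishes. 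This gives $\nabla \Psi = 0$, $\psi_- = 0$ (so $\Psi|_{\partial M} = \psi_0$), and $(\mean_{\partial M} - h)|\psi_0|^2 \equiv 0$ on $\partial M$. Unique continuation for the first-order elliptic operator $A$ on the closed manifold $\partial M$ applied to the nonzero section $\psi_0 \in \ker A$ ensures its zero set has empty interior, so $\mean_{\partial M} = h$ everywhere.

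In case (ii), I apply \labelcref{eq:schroedinger_lichnerowicz} to the obstruction $\Phi$ instead. Since $\BdDirac = B - \tfrac{1}{2}h$ and the dual APS condition places $\Phi|_{\partial M}$ in the non-positive spectral part of $B$, the boundary integrand becomes $\tfrac{1}{2}(\mean_{\partial M} + h)|\Phi|_{\partial M}|^2 - \langle B(\Phi|_{\partial M}), \Phi|_{\partial M} \rangle$, and both summands are non-negative by $\mean_{\partial M}, h \geq 0$ together with the spectral constraint. All terms in the formula therefore vanish, forcing $\Phi$ to be parallel and hence nowhere vanishing on the connected manifold $M$. The relation $(\mean_{\partial M} + h)|\Phi|^2 \equiv 0$ on $\partial M$ then yields $\mean_{\partial M} + h \equiv 0$, which combined with $\mean_{\partial M} \geq h \geq 0$ gives $\mean_{\partial M} \equiv h \equiv 0$.

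In both cases, we obtain a nonzero parallel section of $S \to M$ and the identity $\mean_{\partial M} = h$. If $\mean_{\partial M} \not\equiv 0$, case (ii) is excluded, case (i) applies, and the parallel section $\Psi$ extends $\psi_0$ itself. The main technical subtlety is the correct identification of the adjoint APS boundary condition---namely the sign swap $h \leftrightarrow -h$ combined with the switch between $[0,\infty)$ and $(0,\infty)$ spectral thresholds as described in \cref{subsec:APS}---and the careful verification that the boundary integrand decomposes into a sum of non-negative summands, which depends delicately on the spectral support of the boundary trace with respect to $A$ in case (i) and with respect to $B$ in case (ii).
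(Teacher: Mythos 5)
Your proof is correct and takes essentially the same approach as the paper: the Fredholm alternative for the generalized APS boundary value problem $\chi_{[0,\infty)}(\BdDirac - \tfrac12 h)(\Psi|_{\partial M}) = 0$ combined with the integrated Schrödinger--Lichnerowicz formula, with the same two cases (solution vs.\ nontrivial adjoint kernel) and the same conclusion in each. One small simplification available to you in case (i): since $\Psi$ is parallel and nonzero on the connected manifold $M$, the trace $\psi_0 = \Psi|_{\partial M}$ is nowhere vanishing, so $\mean_{\partial M} = h$ follows immediately from $(\mean_{\partial M} - h)\lvert\psi_0\rvert^2 \equiv 0$ without invoking unique continuation for $A$ on $\partial M$.
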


\begin{proof}
   
    We consider the Dirac operator \(\Dirac \colon \SobolevH^1(M,S; B) \to \Lp^2(M,S)\) subject to the boundary condition \(B\) given by the spectral projection \(\chi_{[0,\infty)}(\BdDirac - \frac{1}{2} h)\) as described in \cref{subsec:APS}.
    Then the adjoint boundary condition \(B^{\mathrm{ad}}\) is determined by the projection \(\chi_{(0,\infty)}(\BdDirac + \frac{1}{2} h)\).
    We now apply the Fredholm alternative.

  First consider the case that the operator \(\Dirac \colon \SobolevH^1(M,S; B) \to \Lp^2(M,S)\) is surjective.
  Then choose a section \(\Psi_0 \in \Ct^\infty(M,S)\) such that \(\Psi_0|_{\partial M} = \psi_0\).
 By surjectivity, we can furthermore choose \(\Psi_1\) such that \(\Dirac \Psi_1 = - \Dirac \Psi_0\) and \(\chi_{[0,\infty)}(\BdDirac - \frac{1}{2} h)(\psi_1) = 0\) with \(\psi_1 \coloneqq {\Psi_1}|_{\partial M}\).
 We then set \(\Psi = \Psi_0 + \Psi_1\). Then \(\Dirac \Psi = 0\) and hence by \eqref{eq:schroedinger_lichnerowicz} we have
 \begin{align*}
  0 &\geq \|\nabla \Psi\|^2_{\Lp^2} + \int_{\partial M} \langle (\tfrac{1}{2} \mean_{\partial M} - \BdDirac) \Psi, \Psi \rangle \dS \\
  &\geq \|\nabla \Psi\|^2_{\Lp^2} + \int_{\partial M} \langle (\tfrac{1}{2} h - \BdDirac) \Psi, \Psi \rangle \dS \\
  &= \|\nabla \Psi\|^2_{\Lp^2} + \int_{\partial M} \langle (\tfrac{1}{2} h - \BdDirac) \psi_1, \psi_1 \rangle \dS + \int_{\partial M} \langle (\tfrac{1}{2} h - \BdDirac) \psi_0, \psi_0 \rangle \dS \\
  &= \|\nabla \Psi\|^2_{\Lp^2} + \int_{\partial M} \langle (\tfrac{1}{2} h - \BdDirac) \psi_1, \psi_1 \rangle \dS \geq \|\nabla \Psi\|^2_{\Lp^2} + \lambda_0 \int_{\partial M} |\psi_1|^2 \dS \geq 0,
 \end{align*}
 Here the cross terms vanish because \(\frac{1}{2} h - \BdDirac\) is self-adjoint and \((\frac{1}{2} h - \BdDirac)\psi_0 = 0\), while \(\lambda_0 > 0\) denotes the smallest positive eigenvalue of the operator \(\tfrac{1}{2} h - \BdDirac\).
 This means that \(\nabla \Psi = 0\), \(\psi_1 = 0\) and hence \(\Psi|_{\partial M} = \psi_0\).
 So we have found the desired parallel section \(\Psi\) and it extends \(\psi_0\).
 Moreover, it then follows from \labelcref{eq:boundary_dirac} that \(\BdDirac \psi_0 = \frac{1}{2} \mean_{\partial M} \psi_0\) because \(\psi\) is nowhere vanishing as an extension of a parallel section.
 Since by assumption we have \(\BdDirac \psi_0 = \frac{1}{2} h \psi_0\) and since $\psi_0$ is nonvanishing almost everywhere, this means that \(h = \mean_{\partial M}\).
 
 On the other hand, if the operator \(\Dirac \colon \SobolevH^1(M,S; B) \to \Lp^2(M,S)\) is not surjective, then the operator \(\Dirac\), subject to the adjoint condition \(B^{\mathrm{ad}}\), must have a non-trivial element \(\Psi\) in the kernel, that is, \(\Dirac \Psi = 0\) and \(\chi_{(0,\infty)}(\BdDirac + \frac{1}{2} h)(\Psi|_{\partial M})\) = 0.
  By \eqref{eq:schroedinger_lichnerowicz},
  \begin{align*}
    0 = \|\Dirac \Psi\|_{\Lp^2(M)}^2 &\geq \| \nabla \Psi \|_{\Lp^2}^2 + \int_{\partial M} \langle (\tfrac{1}{2} \mean_{\partial M} - \BdDirac) \Psi, \Psi \rangle \dS \\
    &\geq \int_{\partial M} \tfrac{1}{2} \mean_{\partial M}  |\Psi|^2 \dS 
    + \int_{\partial M} \langle (-\tfrac{1}{2} h - \BdDirac) \Psi, \Psi \rangle \dS \geq 0,
  \end{align*}
  where we crucially use non-negativity of \(h\) and \(\mean_{\partial M}\).
  We conclude that \(\nabla \Psi = 0\), which provides the desired parallel section for this case, as well as \(0 = \mean_{\partial M} \geq h \geq 0\) and so also \(\mean_{\partial M} = h = 0\).
  In both cases, all non-negative terms in \labelcref{eq:schroedinger_lichnerowicz} vanish, so in particular \((\Curv^S \Psi,\Psi)_{\Lp^2(M)} = 0\).
  Since \(\Curv^S \geq 0\) pointwise on \(M\) as a fiberwise self-adjoint endomorphism, this implies \(\Curv^S \Psi = 0\).
  In particular, this second case can only occur if \(\mean_{\partial M} = 0\).
\end{proof}

Applying \cref{extension_lemma} to the spinor bundle itself yields the following statement.

\begin{proposition}\label{prop:spinor_extension}
    Let \((\Sigma^{n-1},g_\Sigma)\) be a closed spin manifold, and let \(h \colon \Sigma \to [0,\infty)\) be a smooth non-negative function such that there exists a non-trivial spinor \(\psi \in \Ct^\infty(\Sigma, \ReducedSpinBdl_\Sigma)\) satisfying \(\ReducedSpinDirac_\Sigma \psi = \frac{1}{2} h \psi\).
    Let \((M,g)\) be a connected compact Riemannian spin manifold with boundary such that \(\scal_M \geq 0\).
    Let \(\partial_0 M \subseteq \partial M\) be a union of connected components such that \((\partial_0 M,g|_{\partial_0 M})=(\Sigma,g_\Sigma)\) \embparen{with the given spin structure on \(\Sigma\)}.
    Assume that \(\mean_{\partial M} \geq h\) on \(\partial_0 M\) and \(\mean_{\partial M} \geq 0\) everywhere.
    Then \(M\) admits a parallel spinor and thus is Ricci-flat. Moreover, in this case \(\mean_{\partial M} = h\) on \(\partial_0 M\) and \(\mean_{\partial M}=0\) on \(\partial M \setminus \partial_0 M\).
    If, in addition, \(h \not\equiv 0\), then \(\partial_0 M\) is connected and \(\partial_0 M = \partial M\).
\end{proposition}
\begin{proof}
    Let \(S \to M\) denote the spinor bundle of \(M\) with Dirac operator \(\Dirac\).
    Then, along \(\partial_0 M\), the restriction \(S^\partial|_{\partial_0 M}\) can be identified with one or two copies (depending on the dimension parity) of the spinor bundle \(\ReducedSpinBdl_\Sigma\) such that the canonical boundary operator \(\BdDirac\) identifies with \(\ReducedSpinDirac_\Sigma\) on each copy.
    Thus, by assumption there exists a section \(\psi_0 \in \Ct^\infty(\partial_0 M, S^\partial)\) such that \(\BdDirac \psi_0 = \frac{1}{2} h \psi_0\) on \(\partial_0 M\).

    Extend \(\psi_0\) by \(0\) to a section \(\tilde\psi_0 \in \Ct^\infty(\partial M, S^\partial)\), and extend \(h\) by \(0\) to a smooth function \(\tilde h \colon \partial M \to [0,\infty)\).
    Then \(\BdDirac \tilde\psi_0 = \frac{1}{2} \tilde h \tilde\psi_0\) on \(\partial M\) and \(\mean_{\partial M} \geq \tilde h\).
    Hence the first part of the statement follows from \cref{extension_lemma}.
    In particular, \(\mean_{\partial M}=\tilde h\), so \(\mean_{\partial M}=h\) on \(\partial_0M\) and \(\mean_{\partial M}=0\) on \(\partial M\setminus\partial_0M\).

    Finally, assume \(h \not\equiv 0\).
    Hence \(\mean_{\partial M}\not\equiv 0\).
    Let \(\Gamma\) be a connected component of \(\partial_0M\) such that \(\psi_0|_{\Gamma}\not\equiv 0\), and let \(\tilde\psi_\Gamma\) be the extension of \(\psi_0|_{\Gamma}\) by \(0\) to all of \(\partial M\).
    Since \(\BdDirac\psi_0=\frac{1}{2}h\psi_0\) on \(\partial_0M\), \(\tilde h\) is the extension of \(h\) by \(0\) to \(\partial M\), and \(\tilde\psi_\Gamma=0\) on \(\partial M\setminus\Gamma\), we have
    \[
      \BdDirac \tilde\psi_\Gamma = \frac{1}{2}\tilde h\,\tilde\psi_\Gamma
      \quad\text{on }\partial M.
    \]
    Therefore, by the last statement of \cref{extension_lemma}, \(\tilde\psi_\Gamma\) extends to a non-zero parallel spinor on \(M\), which is nowhere vanishing.
    Since \(\tilde\psi_\Gamma\) vanishes on \(\partial M\setminus\Gamma\), this forces \(\partial M=\Gamma\).
    In particular, \(\partial_0M=\partial M\) and \(\partial_0M\) is connected.
\end{proof}

We are now ready to prove the first two of our main theorems.

\begin{proof}[Proof of \cref{Thm:Berger}]
    On an infinite (but non-generic) family of Berger metrics on \(\Sphere^3\), there are non-trivial solutions to the Dirac equation according to \cite[\S3.1]{Hitchin}.
    Similarly, Bär showed in \cite{Baer_Harmonic_spinors} that on every closed spin manifold of dimension $n\equiv 3 \mod 4$, there exists a metric admitting a harmonic spinor, and Hitchin did so in \cite[Theorem 4.5]{Hitchin} for dimension $n\equiv 0,1,7\mod 8$.
    Hence, the result follows from \cref{prop:spinor_extension} with \(h = 0\).
\end{proof}

\begin{proof}[Proof of \cref{Thm:ricci_flat_extremality}]
    Given a parallel spinor on \(N\), its restriction to \(\Sigma\) provides a spinor \(\psi_0\) satisfying \(\BdDirac \psi_0 = \frac{1}{2}\mean_{\partial N} \psi_0\) because of \labelcref{eq:boundary_dirac}.
    Let \((M,g)\) be a spin NNSC fill-in of \((\Sigma, g_\Sigma,h)\). Since \(\mean_{\partial M}=h \geq \mean_{\partial N}\geq0\), we can apply \cref{prop:spinor_extension} with \(\partial_0M=\partial M\) and \(h\) replaced by \(\mean_{\partial N}\).
    This gives a parallel spinor on \(M\) and \(\mean_{\partial M}=\mean_{\partial N}\), hence \(h=\mean_{\partial N}\).
    If \(h\not\equiv0\), then also \(\mean_{\partial N}\not\equiv0\), so the last part of \cref{prop:spinor_extension} yields connectedness of \(\partial M=\Sigma\).
\end{proof}

\begin{remark}\label{remark:extendability_issue}
    The proof of \cref{extension_lemma} has the curious feature that both cases of the Fredholm alternative lead to the existence of a spinor with the desired property.
    One might be tempted to ask more generally whether in the setting of the theorem any spinor on the boundary \(\psi_0 \in \Ct^\infty(\partial M, S)\) satisfying \(\BdDirac \psi_0 = \frac{1}{2} h \psi_0\) can be extended to a parallel spinor on a spin NNSC fill-in with \(\mean \geq h\).
    
    The proof already shows that this is the case whenever the first part of the Fredholm alternative applies.
    Moreover, the second case can only occur if \(h = 0\).
    So if \(h \neq 0\), any such spinor indeed extends to a parallel spinor on the fill-in.
    
    However, if \(h = 0\) and we have a non-trivial spinor \(\psi_0\) at the boundary satisfying \(\BdDirac \psi_0 = 0\), then \(\psi_1 = \clm(\nu_M) \psi_0\) also satisfies \(\BdDirac \psi_1 = 0\).
    But it is not possible to simultaneously extend both \(\psi_0\) and \(\psi_1\) to harmonic spinors on the spin fill-in \(M\), because if \(\Psi_i\) was a harmonic spinor on \(M\) extending \(\psi_i\) for \(i = 0,1\), then we would have
    \[
        0 = (\Dirac \Psi_0, \Psi_1)_{\Lp^2(M)} - ( \Psi_0, \Dirac\Psi_1)_{\Lp^2(M)} = \int_{\partial M} \langle \psi_0, \clm(\nu_M) \psi_1 \rangle \dS = - \int_{\partial M} \lvert \psi_0 \rvert^2 \dS \neq 0.
    \]
    
    Relatedly, non-negativity of \(h\) cannot be dropped from the hypotheses of \cref{extension_lemma} because if \(\BdDirac \psi_0 = \frac{1}{2} h \psi_0\), then we have \(\BdDirac \psi_1 = \frac{1}{2} (-h) \psi_1\) for \(\psi_1 = \clm(\nu_M) \psi_0\).
    For instance, the sphere \(\Sphere^{n-1}\) for \(n \geq 3\) admits a spinor \(\psi_1\) with \(\ReducedSpinDirac_{\Sphere^{n-1}} \psi_1 = -\frac{n-1}{2} \psi_1\) even though the hemisphere in \(\Sphere^n\) is a spin PSC fill-in of \(\Sphere^{n-1}\) with \(\mean = 0 \geq - \frac{n-1}{2}\).
    More drastically, the boundary Dirac operator always has arbitrarily negative eigenvalues.
\end{remark}

\begin{remark} \label{remark:previous_results}
A similar result as in \cref{prop:spinor_extension} was established by \textcite[Theorem~1]{Raulot:RigidityCompact} provided that \(h\) does not vanish identically.
However, for the rigidity statement in our main application, \cref{Thm:Berger}, we need to include the case \(h = 0\).

    A closely related statement was also claimed by \textcite[Theorem~6]{Hijazi_etal} formulated in terms of the smallest non-negative eigenvalue \(\lambda_1 \geq 0\) of the boundary Dirac operator \(\BdDirac\), which can be viewed as a special case of \cref{prop:spinor_extension} with constant \(h = 2 \lambda_1\).
    However, the statement and proof of \cite[Theorem~6]{Hijazi_etal} have a gap because it is claimed that if \(\frac{\mean_{\partial M}}{2} \geq \lambda_1\), any spinor satisfying \(\BdDirac \psi = \lambda_1 \psi\) on the boundary extends to a parallel spinor, which is not true in the case relevant for us, namely \(\lambda_1 = 0\), as we have discussed in \cref{remark:extendability_issue}.
    The root of the problem occurs in \cite[Theorem 2]{Hijazi_etal} which does not account for the possibility of having harmonic spinors on the boundary in which case the APS boundary value problem is not self-adjoint.
    
    Nevertheless, the first part of the statement in \cite[Theorem~6]{Hijazi_etal}, namely that for any NNSC fill-in \((M,g)\) we have \(
      \lambda_1 \geq \min_{\Sigma} \frac{\mean_{\partial M}}{2}
    \),
    remains true even if \(\lambda_1 = 0\), as it is a consequence of our \cref{extension_lemma}.\footnote{This fix is also exhibited in detail by \textcite[Appendix A]{baer2024diraceigenvalueshypersphericalradius}.}
    We also mention that another recent paper of \textcite{Raulot_DEC-fillin} treats a similar eigenvalue estimate in the setting of initial data sets and fill-ins satisfying the dominant energy condition.
\end{remark}

Next, we show that these methods imply a comparison result in the spirit of \textcite{GoetteSemmelmann,Lott}.
Indeed, the following theorem generalizes \cite[Theorem~1.3]{Lott} modulo the spin condition \labelcref{item:spin_condition}. 
Our proof does not use the Atiyah--Patodi--Singer index theorem as in \cite{Lott} but instead relies on the extension principle of \cref{extension_lemma}. 

\begin{theorem} \label{APS_lott}
    Let \(f \colon (M, \partial M) \to (N, \partial N)\) be a smooth spin map between connected Riemannian manifolds with boundary such that
    \begin{myenumi}
        \item the restricted map \(\partial f \coloneqq f|_{\partial M} \colon \partial M \to \partial N\) is an isometry, \label{item:boundary_isometry}
        \item the map \(f\) admits a spin structure which extends the canonical spin structure of \(\partial f\) from \cref{example:identity},\label{item:spin_condition}
        \item \(f \colon M \to N\) is area non-increasing, \label{item:area_condition}
        \item \(\scal_{M} \geq \scal_{N} \circ f\) and the curvature operator of \(N\) is non-negative, \label{item:scal_condition}
        \item \(\mean_{\partial M} \geq \mean_{\partial N} \circ \partial f\) and \(\mean_{\partial N} \geq 0\). \label{item:mean_condition}
    \end{myenumi}
    Then \(\scal_M = \scal_N \circ f\) and \(\mean_{\partial M} = \mean_{\partial N} \circ \partial f\).
    Moreover, if one of the two following additional conditions holds,
    \begin{itemize}
        \item \(0 <  \Ric_N < \frac{\scal_N}{2} \),
        \item or \(0 < \Ric_N\) and \(f\) is distance non-increasing,
    \end{itemize}
    then \(f \colon M \to N\) is an isometry.
\end{theorem}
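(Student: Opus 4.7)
The plan is to apply the extension principle of \cref{extension_lemma} to the spinor bundle $S \to M$ associated with the spin map $f$, as constructed in \cref{subsec:spin_maps}. This requires verifying three inputs with $h \coloneqq \mean_{\partial N} \circ \partial f$: (a) non-negativity $\Curv^S \geq 0$ on $M$; (b) existence of a non-zero boundary section $\psi_0 \in \Ct^\infty(\partial M, S^\partial)$ satisfying $\BdDirac \psi_0 = \tfrac{1}{2} h \psi_0$; and (c) the mean-curvature inequality $\mean_{\partial M} \geq h \geq 0$. Condition (c) is immediate from \labelcref{item:mean_condition}. For (a), using $\Curv^S = \frac{\scal_M}{4} + \Curv^N$ together with \labelcref{eq:twisting_curvature}, a standard Llarull--Goette--Semmelmann--Lott estimate based on the non-negativity of the curvature operator of $N$ in \labelcref{item:scal_condition} and the area non-increasing assumption \labelcref{item:area_condition} yields $\Curv^N \geq -\tfrac{\scal_N \circ f}{4}$ as symmetric endomorphisms of $S$; combined with $\scal_M \geq \scal_N \circ f$ this gives $\Curv^S \geq 0$.

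The core technical step is establishing (b). Conditions \labelcref{item:boundary_isometry} and \labelcref{item:spin_condition} together identify the restricted bundle $S^\partial$---along with its boundary Dirac operator---with the analogous construction of \cref{example:identity} for the identity map on $\partial M = \partial N$ augmented by the normal direction. Using this identification one obtains a canonical nonzero section $\psi_0$ of $S^\partial$ coming from the diagonal reduction of structure groups afforded by the boundary isometry. A direct computation using \labelcref{eq:boundary_dirac} and the fact that the spin connection on $S$ combines contributions from $TM$ and $f^*TN$---whose tangential parts match via $\partial f$ while their normal parts involve the shape operators of $\partial M \subset M$ and $\partial N \subset N$ (pulled back via $\partial f$)---should produce the generalized eigenvalue equation $\BdDirac \psi_0 = \tfrac{1}{2} (\mean_{\partial N} \circ \partial f) \psi_0$.

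Once all the hypotheses are verified, \cref{extension_lemma} yields a nowhere-vanishing parallel section $\Psi$ of $S \to M$ and forces $\mean_{\partial M} = \mean_{\partial N} \circ \partial f$. Plugging $\nabla \Psi = 0$ into the Weitzenböck formula \labelcref{eq:schroedinger_lichnerowicz} then forces $\Curv^S \Psi = 0$ pointwise, making the earlier curvature inequality sharp on $\Psi$, and hence $\scal_M = \scal_N \circ f$. For the additional isometry conclusion, the pointwise equality $\Curv^N \Psi = -\tfrac{\scal_N \circ f}{4} \Psi$ combined with either the pinching $\Ric_N < \tfrac{\scal_N}{2}$ or the distance non-increasing hypothesis on $f$ forces $Df$ to be a linear isometry at every point via the classical algebraic rigidity argument (see, e.g., \cite[Theorem~3.2]{wang2023gromovs}); connectedness of $M$ together with the boundary isometry then promote $f$ to a global isometry. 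The main obstacle I anticipate is the verification of the eigenvalue equation in step (b): it requires a careful book-keeping of the two-sided spin connection along $\partial M$ and the corresponding shape-operator contributions to $\BdDirac$, ensuring that the naive ``intrinsic" $\tfrac{\mean_{\partial M}}{2}$ factor is effectively corrected to $\tfrac{\mean_{\partial N} \circ \partial f}{2}$ by the normal-direction contributions from both bundles.
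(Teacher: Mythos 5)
Your overall strategy matches the paper's: apply \cref{extension_lemma} with $h = \mean_{\partial N} \circ \partial f$, use the Goette--Semmelmann estimate to get $\Curv^S \geq 0$, and deduce the conclusions from the resulting parallel spinor together with the rigidity results of Goette--Semmelmann and Lott. The verification steps you list, and the way you close out the scalar- and mean-curvature equalities and the final isometry conclusion, are all in order.

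The genuine gap is in step (b), which you explicitly flag as the main obstacle. You posit that the eigenvalue equation $\BdDirac\psi_0 = \tfrac12(\mean_{\partial N}\circ\partial f)\psi_0$ should emerge from a direct book-keeping of shape-operator contributions, phrased as ``correcting'' a naive $\tfrac{\mean_{\partial M}}{2}$ factor coming from \labelcref{eq:boundary_dirac}. This framing is misleading: the canonical boundary operator $\BdDirac$ is intrinsic and has no built-in mean-curvature term; the $\tfrac{\mean_{\partial M}}{2}$ in \labelcref{eq:boundary_dirac} only appears when one expresses $\BdDirac$ in terms of an extension off $\partial M$ into $M$, and you have no parallel extension into $M$ to control the other two terms. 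The paper sidesteps this entirely. Using the boundary isometry \labelcref{item:boundary_isometry} and the spin condition \labelcref{item:spin_condition}, one identifies $S^\partial$ with $(\partial f)^*\bigl(\bar S|_{\partial N}\bigr)$ where $\bar S = \bigwedge \T N$ is the spinor bundle for $\id_N$ from \cref{example:identity}, in a way that intertwines the canonical boundary connections, hence the canonical boundary Dirac operators $\BdDirac$ and $\bar\BdDirac$. One then takes $\psi_0 = (\partial f)^*1$. Because the constant function $1 \in \Ct^\infty(N, \bigwedge^0 \T N)$ is parallel on $N$, formula \labelcref{eq:boundary_dirac} \emph{relative to the ambient manifold $N$} immediately gives $\bar\BdDirac(1) = \tfrac12\mean_{\partial N}\cdot 1$ (the $\bar{\Dirac}$ and normal-derivative terms vanish), and pulling back gives the desired equation. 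So the essential missing idea in your proposal is to exploit the known parallel extension on $N$ rather than attempting an intrinsic computation involving shape operators of $\partial M$.
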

\begin{remark}
    The condition on the spin structure \labelcref{item:spin_condition} is automatically satisfied if the restriction to the boundary induces a surjective map \(\HZ^1(N; \Z/2) \to \HZ^1(\partial N; \Z/2)\) on the first \(\Z/2\)-cohomology, for instance if \(\HZ^1(\partial N; \Z/2) = 0\).
    Indeed, the discrepancy between the induced boundary spin structure and the canonical one is measured by a class in \(\HZ^1(\partial M;\Z/2)\), which we may view as an element of \(\HZ^1(\partial N;\Z/2)\) via the isometry \(\partial f\).
    Twisting the spin structure on \(f\) by \(f^\ast\beta\) for \(\beta \in \HZ^1(N;\Z/2)\) changes this discrepancy by \(\beta|_{\partial N}\) (pulled back along \(\partial f\)).
    Thus, if \(\HZ^1(N;\Z/2) \to \HZ^1(\partial N;\Z/2)\) is surjective, we can choose \(\beta\) to cancel the discrepancy and obtain \labelcref{item:spin_condition}.
\end{remark}
\begin{proof}[Proof of \cref{APS_lott}]
    We work with the Dirac bundle \(S \to M\) as in \cref{subsec:spin_maps}, where we choose a spin structure on \(f\) with the property as in condition \labelcref{item:spin_condition}.
    We use the normal field \(\nu_M\) to identify \(\T M|_{\partial M} = \T(\partial M) \oplus \R\).
    Then \(S^\partial\) is a spinor bundle associated to a spin structure on the bundle \(\T (\partial M) \oplus \R \oplus (\partial f)^\ast \left( \T N|_{\partial N}\right)\) with respect to a suitable pseudo-Riemannian bundle metric (compare \cref{subsec:spin_maps}).
    The boundary connection \(\nabla^\partial\) on \(S^\partial\) as defined in \cref{subsec:boundary_dirac} then corresponds to the lift of the connection \(\nabla^{\partial M} \oplus \D \oplus f^\ast(\nabla^N)\), where \(\nabla^{\partial M}\) and \(\nabla^N\) are the Levi--Civita connections on \(\partial M\) and \(N\), respectively, and \(\D\) denotes the trivial connection.
    Next consider \(\bar{S} = \bigwedge \T N\) the exterior algebra on \(N\), which is also the spinor bundle associated to the canonical spin structure on \(\T N \oplus \T N\) as in \cref{example:identity}.
    Note that, along \(\partial N\), we can again identify \(\left(\T N \oplus \T N \right)|_{\partial N} = \T (\partial N) \oplus \R \oplus \T N\) using the normal field \(\nu_N\).
    The corresponding boundary connection \(\bar{\nabla}^\partial\) on \(\bar{S}\) as defined in \cref{subsec:boundary_dirac} again corresponds to the lift of the connection \(\nabla^{\partial N} \oplus \D \oplus \nabla^N\).
    Together with \labelcref{item:boundary_isometry}, we obtain an isometric identification
    \begin{align*}
        \left( \T M \oplus f^\ast \T N \right)|_{\partial M} &=  \T(\partial M) \oplus \R \oplus \left( f^\ast \T N \right)|_{\partial M} \\
        &\cong (\partial f)^\ast (\T (\partial N)) \oplus \R \oplus \left( f^\ast \T N \right)|_{\partial M} \\
        &= (\partial f)^\ast \left( \T (\partial N) \oplus \R \right) \oplus \left( f^\ast \T N \right)|_{\partial M} \\
        &= \left( f^\ast \left(  \T N \oplus \T N \right)\right)|_{\partial M}
    \end{align*}
    which takes the connection \(\nabla^{\partial M} \oplus \D \oplus f^\ast(\nabla^N)\) to \((\partial f)^\ast \left( \nabla^{\partial N} \oplus \D \oplus \nabla^N \right)\).
    Condition \labelcref{item:spin_condition} implies that this lifts to an identification \(S|_{\partial M} \cong (\partial f)^\ast (\bar{S}|_{\partial N}) = (\partial f)^\ast(\bigwedge \T N)\) taking \(\nabla^\partial\) to \(\bar{\nabla}^\partial\).
    In particular, the associated canonical boundary Dirac operators \(\BdDirac\) on \(S^\partial\) and \(\bar{\BdDirac}\) on \(\bar{S}^\partial\) agree along this isomorphism.
    
    Taking the preparation from the previous paragraph into account, now let \(\psi_0 = (\partial f)^\ast 1 \in \Ct^\infty(\partial M, S)\) be the section corresponding to the constant function \(1 \in \Ct^\infty(\partial N, \bigwedge^0 \T N)\) under this identification.
    Since this extends to the parallel section \(1 \in \Ct^\infty(N, \bigwedge^0 \T N)\) on \(N\), it follows by \labelcref{eq:boundary_dirac} that we have \(\bar{\BdDirac}(1) = \frac{\mean_{\partial N}}{2} 1\).
     Thus \(\BdDirac(\psi_0) = (\partial f)^\ast (\bar{\BdDirac} 1) = (\partial f)^\ast \left( \frac{1}{2}\mean_{\partial N} 1\right) = \frac{1}{2}(\mean_{\partial N} \circ (\partial f)) \psi_0\).
    Finally, it follows from the computation of \textcite[Lemma~1.1]{GoetteSemmelmann} together with conditions \labelcref{item:area_condition,item:scal_condition} that \(\Curv^S \geq 0\), compare also \cref{interior_curvature_estimate} below.
    Hence we are in the setting of \cref{extension_lemma} with \(h = \mean_{\partial N} \circ (\partial f)\) because of \labelcref{item:mean_condition}.
    Hence there exists a non-zero parallel section \(\Psi \in \Ct^\infty(M, S)\) satisfying \(\Curv^S \Psi = 0\), and \(\mean_{\partial M} = \mean_{\partial N} \circ (\partial f)\).
    The remaining statements then follow from the equality case analysis in \textcite{GoetteSemmelmann,Lott}.
\end{proof}

\begin{remark}\label{remark:spin_fill-in}
    The setup in the statement of \cref{prop:spinor_extension} also includes the spin structure in a subtle way.
    Indeed, we only obtain a result on fill-ins \(M\) that extend the particular spin structure of \(\Sigma\) on whose spinor bundle the given spinor \(\psi\) lives.
    That this is a necessary restriction can already be seen in the case of \(\Sphere^1\) which admits two spin structures, one of which extends to the disk but has no harmonic spinors, whereas the other admits a harmonic spinor but does not extend to the disc.
    In particular, the presence of the latter spin structure does not contradict the fact that the disk is an NNSC fill-in of \(\Sphere^1\) with positive mean curvature.
    Indeed, following \cref{subsec:boundary_dirac}, the spinor bundle of the disc restricted to the circle \(\Sphere^1\) can be identified with the trivial bundle \(\Sphere^1 \times \C^2\) with canonical boundary operator
    \[
      \BdDirac = \begin{pmatrix}
        -\iu \frac{\D}{\D \theta} + \frac{1}{2} & 0 \\
        0 & \iu \frac{\D}{\D \theta} + \frac{1}{2}
      \end{pmatrix}  
    \]
    which visibly does not have a kernel.
    A more intrinsic description is that sections of the spinor bundle on \(\Sphere^1\) associated to the spin structure restricted from the disc can be identified with \(2\pi\)-anti-periodic functions on \(\R\).
    The other spin structure on \(\Sphere^1\)---the one that does \emph{not} extend to the disc---corresponds to the trivial principal spin bundle and hence is just the trivial bundle \(\Sphere^1 \times \C\) on which the Dirac operator \(\iu \frac{\D}{\D \theta}\) acts. It clearly has a kernel given by the constant functions.
    See for instance \cite{Baer:Dependence_spin_structure} for more details on these and other examples of this nature.
    
    Of course, this aspect is not relevant in the case of the main example in \cref{Thm:Berger}, the Berger sphere, since \(\Sphere^3\) has only one spin structure.
    
    In the case of \cref{APS_lott}, there is also a spin extension condition as part of \labelcref{item:spin_condition}, but it is unclear if this is necessary.
\end{remark}

\section{The main integral formula}\label{sec:integral_formula}

In this section, we estimate the terms involving \(\Curv^N\) and \(\BdDirac\) from the Schr\"odinger--Lichnerowicz formula \labelcref{eq:schroedinger_lichnerowicz} in the setting of \cref{subsec:spin_maps} to obtain the integral inequality in \cref{main_integral_inequality}.
This combines the estimates of \textcite{GoetteSemmelmann,listing2010scalar,Lott}, but we need a more precise version of the boundary term for the proof of \cref{Thm:Llarull intro}.
We start with the following commutation relation.

\begin{lemma} \label{boundary_dirac_commutation}
    \begin{equation*}
        \BdDirac \clms(\nu_N) = \clms(\nu_N) \BdDirac + \clm(\nu_M) \sum_{i=1}^{n-1} \clm(e_i) \clms(- \nabla_{\D f(e_i)} \nu_N) 
    \end{equation*}
\end{lemma}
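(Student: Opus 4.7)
The plan is to expand the definition of $\BdDirac$ and apply the Leibniz rule to move $\clms(\nu_N)$ through the boundary covariant derivative, then use the anticommutation relations of the two Clifford actions to collect terms. The main technical point is keeping track of which connection acts on what and verifying that the pullback connection $f^\ast \nabla^N$ produces exactly $\nabla^N_{\D f(e_i)} \nu_N$.

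First I would note that although $\nabla^\partial$ differs from $\nabla$ by the term $\tfrac{1}{2}\clm^\partial(\nabla_\xi \nu_M)$ involving only the $TM$-Clifford action, this extra term commutes with $\clms(\nu_N)$: indeed $\clms(\nu_N)$ anticommutes with both $\clm(\nabla_\xi \nu_M)$ and $\clm(\nu_M)$ and the two sign flips cancel. Combined with the fact that $\nabla$ is compatible with both Clifford actions, this shows
\[
\nabla^\partial_{e_i}(\clms(\nu_N)\psi) \;=\; \clms\bigl(\nabla^N_{\D f(e_i)}\nu_N\bigr)\psi \;+\; \clms(\nu_N)\,\nabla^\partial_{e_i}\psi,
\]
where I use that $\nu_N$ is a section of $f^\ast \T N$ and $(f^\ast\nabla^N)_{e_i}\nu_N = \nabla^N_{\D f(e_i)}\nu_N$.

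Next I would multiply by $\clm(e_i)\clm(\nu_M)$ and sum over $i = 1, \dots, n-1$. In the second summand, since $\clms(\nu_N)$ anticommutes separately with $\clm(e_i)$ and with $\clm(\nu_M)$, it commutes with the product $\clm(e_i)\clm(\nu_M)$; therefore pulling $\clms(\nu_N)$ to the left recovers $\clms(\nu_N)\BdDirac\psi$ exactly. For the first summand, since $e_i$ is tangent to $\partial M$ and hence orthogonal to $\nu_M$, we have $\clm(e_i)\clm(\nu_M) = -\clm(\nu_M)\clm(e_i)$, and pulling out the minus sign into the argument of $\clms$ produces the stated extra term $\clm(\nu_M)\sum_i \clm(e_i)\clms(-\nabla_{\D f(e_i)}\nu_N)$.

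The only thing that could go wrong is a bookkeeping error with signs or with the distinction between the $TM$-modification of $\nabla^\partial$ and the $f^\ast\T N$-part, but since the boundary correction only involves $TM$-vectors (and hence commutes with $\clms(\nu_N)$) the Leibniz rule for $\nabla^\partial$ and $\clms$ is as naive as one could hope. I do not expect any substantial obstacle beyond this careful check.
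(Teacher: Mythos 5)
Your argument is correct and follows the same route as the paper's proof: expand $\BdDirac$, apply the Leibniz rule to pull $\clms(\nu_N)$ through $\nabla^\partial$, and use the anticommutation relations to collect terms. You verify the Leibniz rule for $\nabla^\partial$ and $\clms(\nu_N)$ in more detail than the paper does (the paper simply states it as an observation), but the computation and sign bookkeeping are the same.
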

\begin{proof}
    Let \(\psi\) be a smooth section of \(S\) along \(\partial M\).
    First observe that \(\nabla^\partial_{\xi} \clms(\nu_N) \psi = \clms(\nabla_{\D{f}(\xi)} \nu_N) \psi + \clms(\nu_N) \nabla^\partial_\xi \psi \).
    We thus have
    \begin{align*}
        \BdDirac \clms(\nu_N) \psi &= \sum_{i=1}^{n-1} \clm(e_i) \clm(\nu_M) \nabla_{e_i}^\partial \clms(\nu_N) \psi \\
        &= \sum_{i=1}^{n-1} \left( \clm(e_i) \clm(\nu_M) \clms(\nabla_{\D{f}(e_i)} \nu_N) \psi + \clm(e_i)\clm(\nu_M)\clms(\nu_N) \nabla^\partial_{e_i} \psi \right) \\
        &= \left(\clm(\nu_M)  \sum_{i=1}^{n-1} \clm(e_i) \clms(- \nabla_{\D{f}(e_i)} \nu_N) \psi \right) + \clms(\nu_N) \BdDirac \psi. \qedhere
    \end{align*}
\end{proof}

To estimate the curvature terms further, we will use various norms of a linear map \(T \colon W \to V\) between Euclidean vector spaces \(W,V\), in particular the trace norm \(|T|_{\tr}\), the operator norm \(|T|_\op\), and the singular values \(\sigma_i(T)\), see \cref{appendix:matrix_norms} for details.

In the following, we will use the notation \(\Shape_{\partial N} = - \nabla \nu_N \colon \T(\partial N) \to \T(\partial N)\) to denote the Weingarten map of \(\partial N\).

\begin{lemma} \label{boundary_curvature_estimate}
    Suppose that \(\psi\) satisfies the boundary condition \labelcref{eq:boundary_condition}, that is, \(\clm(\nu_M) \psi = s \clms(\nu_N) \psi\).  
    Then pointwise on \(\partial M\) we have
    \begin{equation*}
        \langle \BdDirac \psi , \psi \rangle \leq \frac{1}{2} \lvert \Shape_{\partial N} \circ \D{f} \rvert_{\tr} \lvert \psi \rvert^2 - \frac{\sigma_{\min}(\Shape_{\partial N} \circ \D{f})}{4}\ \lvert \clms(U \blank) \psi - s \clm(\blank)\psi\rvert^2,
    \end{equation*}
    where \(U_x \colon \T_{x}(\partial M) \to \T_{f(x)}(\partial N)\) is an isometry coming from a polar decomposition of \(\Shape_{\partial N} \circ \D_x f \colon \T_{x}(\partial M) \to \T_{f(x)}(\partial N)\) and \(\sigma_{\min} = \sigma_{\min}(\Shape_{\partial N} \circ \D{f}) \geq 0\) denotes the smallest singular value of \(\Shape_{\partial N} \circ \D{f}\).
\end{lemma}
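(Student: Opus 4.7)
The plan is to reduce the pairing $\langle \BdDirac\psi,\psi\rangle$ to a purely algebraic expression in $\psi$ and the shape operator by exploiting the boundary condition, and then to estimate this algebraic expression using Clifford relations and a polar decomposition of \(A := \Shape_{\partial N}\circ \D f\). The main obstacle is precisely the first part: \(\langle\BdDirac\psi,\psi\rangle\) \emph{a priori} depends on tangential derivatives of \(\psi\) at the point in question, and one has to turn this into an algebraic quantity at the point before trying to estimate anything.

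To accomplish this, introduce \(\chi := \clms(\nu_N)\clm(\nu_M)\). Using \(\clm(\nu_M)^\ast = -\clm(\nu_M)\), \(\clms(\nu_N)^\ast = \clms(\nu_N)\) and the anti-commutativity of \(\clm\) and \(\clms\) on independent vectors, one checks that \(\chi\) is a self-adjoint involution; the boundary condition \labelcref{eq:boundary_condition} is equivalent to \(\chi\psi = s\psi\), which may be rewritten as \(\clms(\nu_N)\psi = s\clm(\nu_M)\psi\). I would then apply \(\BdDirac\) to the identity \(\clms(\nu_N)\psi = s\clm(\nu_M)\psi\). On the right-hand side, the anti-commutation relation \(\BdDirac\clm(\nu_M) = -\clm(\nu_M)\BdDirac\) (immediate from the definitions since \(\nabla^\partial\) is compatible with \(\clm(\nu_M)\) and \(\clm^\partial\) anti-commutes with \(\clm(\nu_M)\)) produces \(-s\clm(\nu_M)\BdDirac\psi\); on the left-hand side, \cref{boundary_dirac_commutation} combined with \(-\nabla_{\D f(e_i)}\nu_N = \Shape_{\partial N}(\D f(e_i)) = A(e_i)\) produces \(\clms(\nu_N)\BdDirac\psi + \clm(\nu_M)\sum_i\clm(e_i)\clms(A e_i)\psi\). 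Solving for \(\BdDirac\psi\) and multiplying by \(-\clm(\nu_M)\) collapses the left-hand side into \((s+\chi)\BdDirac\psi\), so that
\begin{equation*}
(1 + s\chi)\,\BdDirac\psi = -s\sum_{i=1}^{n-1}\clm(e_i)\clms(A e_i)\,\psi =: K\psi.
\end{equation*}
Taking the pointwise pairing with \(\psi\) and using self-adjointness of \(\chi\) together with \(\chi\psi = s\psi\), the left-hand side becomes \(2\langle\BdDirac\psi,\psi\rangle\), giving the desired purely algebraic identity
\begin{equation*}
\langle \BdDirac\psi,\psi\rangle = -\frac{s}{2}\sum_{i=1}^{n-1}\langle \clm(e_i)\clms(A e_i)\psi,\psi\rangle.
\end{equation*}

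For the estimate, I would take a polar decomposition \(A = UH\) with \(H = (A^\ast A)^{1/2}\) positive semidefinite on \(\T(\partial M)\) and \(U\colon \T(\partial M)\to\T(\partial N)\) an isometry extending the canonical one from the closure of \(\mathrm{range}\, H\). Choose \(\{e_i\}\) to be an orthonormal eigenbasis of \(H\) with eigenvalues \(\sigma_i = \sigma_i(A)\) and set \(f_i := U e_i\). Then \(A e_i = \sigma_i f_i\) and the operators \(\omega_i := \clm(e_i)\clms(f_i)\) satisfy \(\omega_i^\ast = \omega_i\), \(\omega_i^2 = 1\), and pairwise commute, as a short Clifford-relations calculation shows using \(\clm(e_i)\clms(f_j) = -\clms(f_j)\clm(e_i)\). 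Expanding the squared norm yields the key identity
\begin{equation*}
|\clms(f_i)\psi - s\clm(e_i)\psi|^2 = 2|\psi|^2 + 2s\langle \omega_i\psi,\psi\rangle,
\end{equation*}
i.e.\ \(-s\langle\omega_i\psi,\psi\rangle = |\psi|^2 - \tfrac{1}{2}|\clms(f_i)\psi - s\clm(e_i)\psi|^2\).

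Substituting this back into the algebraic identity for \(\langle\BdDirac\psi,\psi\rangle\) gives
\begin{equation*}
\langle \BdDirac\psi,\psi\rangle = \frac{1}{2}\Big(\sum_i\sigma_i\Big)|\psi|^2 - \frac{1}{4}\sum_i\sigma_i\,|\clms(f_i)\psi - s\clm(e_i)\psi|^2.
\end{equation*}
Recognising \(\sum_i\sigma_i = |A|_{\tr}\) and bounding \(\sigma_i \geq \sigma_{\min}\) in the second sum, together with the Hilbert--Schmidt interpretation \(\sum_i|\clms(f_i)\psi - s\clm(e_i)\psi|^2 = |\clms(U\blank)\psi - s\clm(\blank)\psi|^2\), yields the claimed inequality. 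The scheme works uniformly whether or not \(A\) is invertible, since the isometry \(U\) can be chosen arbitrarily on \(\ker H\) without affecting either term on the right-hand side (the \(\sigma_i = 0\) summands drop out of both).
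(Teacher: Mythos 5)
Your proof is correct and follows essentially the same route as the paper: both hinge on \cref{boundary_dirac_commutation} to convert $\langle \BdDirac\psi,\psi\rangle$ into the algebraic expression $-\tfrac{s}{2}\sum_i\langle\clm(e_i)\clms(Ae_i)\psi,\psi\rangle$, then diagonalize $A$ via polar decomposition and complete the square. The only cosmetic difference is that you obtain the algebraic identity by applying $\BdDirac$ to the sectional identity $\clms(\nu_N)\psi = s\clm(\nu_M)\psi$ and solving, whereas the paper moves operators around inside the pairing; both rely equally on the boundary condition holding in a neighborhood of the point.
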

\begin{proof}
    It follows from the fact that $\BdDirac$ anticommutes with $\clm(\nu_M)$ and \(\clms(\nu_N) \clm(\nu_M)  \psi = s \psi\) that
    \begin{align*}
     \langle \BdDirac\psi,\psi\rangle=  & s \langle \BdDirac \psi, \clms(\nu_N) \clm(\nu_M)\psi \rangle= s\langle\clms(\nu_N)\BdDirac \psi, \clm(\nu_M)\psi \rangle\\
       =&s\langle (\clms(\nu_N)\BdDirac-\BdDirac\clms(\nu_N)) \psi, \clm(\nu_M)\psi \rangle - s\langle \BdDirac\clms(\nu_N) \clm(\nu_M)\psi,\psi\rangle\\
         =&s\langle (\clms(\nu_N)\BdDirac-\BdDirac\clms(\nu_N)) \psi, \clm(\nu_M)\psi \rangle - \langle \BdDirac\psi,\psi\rangle.
    \end{align*}
    Hence, \cref{boundary_dirac_commutation}  implies
    \begin{align*}
        \langle \BdDirac \psi, \psi \rangle &= -\frac{s}{2} \sum_{i=1}^{n-1}  \left\langle \clm(\nu_M) \clm(e_i)\clms( (\Shape_{\partial N} \circ \D{f})(e_i))\psi, \clm(\nu_M) \psi \right\rangle \\
        &= -\frac{s}{2} \sum_{i=1}^{n-1}  \left\langle \clm(e_i)\clms( (\Shape_{\partial N} \circ \D{f})(e_i))\psi, \psi \right\rangle.
    \end{align*}
    Now choose an orthonormal basis \((e_i)\) of \(\T_x(\partial M)\) such that \((\Shape_{\partial N} \circ \D{f})(e_i) = \sigma_i \bar{e}_i\), where \(\bar{e_i} = U e_i\) is an orthonormal basis of \(\T_{f(x)}(\partial N)\) and \(\sigma_i = \sigma_i(\Shape_{\partial N} \circ \D_x{f}) \geq 0\) are the singular values of \(\Shape_{\partial N} \circ \D_x f\).
    Then \(\lvert \Shape_{\partial N} \circ \D{f} \rvert_{\tr} = \sum_{i=1}^{n-1} \sigma_i\).
    We thus obtain
    \begin{align*}
         \langle \BdDirac \psi , \psi \rangle &= -\frac{s}{2} \sum_{i=1}^{n-1} \left\langle \clm(e_i)\clms( (\Shape_{\partial N} \circ \D{f})(e_i))\psi, \psi \right\rangle \\
        &= -\frac{s}{2} \sum_{i=1}^{n-1} 
        \sigma_i \underbrace{ \left\langle \clm(e_i)\clms(\bar{e}_i)\psi, \psi \right\rangle}_{-|\psi|^2 \leq \dots \leq |\psi|^2} \\
        &= \frac{1}{2} \lvert \Shape_{\partial N} \circ \D{f} \rvert_{\tr} \lvert \psi \rvert^2 -\frac{1}{2} \sum_{i=1}^{n-1} 
        \sigma_i \underbrace{\left( \lvert \psi \rvert^2 +  s \left\langle \clm(e_i)\clms(\bar{e}_i)\psi, \psi \right\rangle \right)}_{\geq 0} \\
        &\leq \frac{1}{2} \lvert \Shape_{\partial N} \circ \D{f} \rvert_{\tr} \lvert \psi \rvert^2 - \frac{\sigma_{\min}}{2} \sum_{i} \left(\lvert \psi \rvert^2 + s \left\langle \clm(e_i)\clms(\bar{e}_i)\psi, \psi \right\rangle \right) \\
        &= \frac{1}{2} \lvert \Shape_{\partial N} \circ \D{f} \rvert_{\tr} \lvert \psi \rvert^2 - \frac{\sigma_{\min}}{2} \sum_{i} \left(\lvert \psi \rvert^2 - s \left\langle \clms(\bar{e}_i)\psi, \clm(e_i) \psi \right\rangle \right)\\
        &= \frac{1}{2} \lvert \Shape_{\partial N} \circ \D{f} \rvert_{\tr} \lvert \psi \rvert^2 - \frac{\sigma_{\min}}{4} \sum_{i} \lvert \clms(U e_i) \psi - s \clm(e_i) \psi \rvert^2 \\
        &= \frac{1}{2} \lvert \Shape_{\partial N} \circ \D{f} \rvert_{\tr} \lvert \psi \rvert^2 - \frac{\sigma_{\min}}{4}\ \lvert \clms(U \blank) \psi - s \clm(\blank)\psi\rvert^2. \qedhere
    \end{align*}
\end{proof}

The same argument also shows:

\begin{lemma} \label{interior_curvature_estimate}
    For a section \(\Psi \in \Ct^\infty(M, S)\), pointwise on \(M\) we have
    \[
        \langle \Curv^N \Psi, \Psi \rangle  \geq -\frac{1}{2}\lvert \mathrm{R}^{\T N} \circ (\D{f} \wedge \D{f}) \rvert_{\tr} \lvert \Psi \rvert^2. %
    \]
 \end{lemma}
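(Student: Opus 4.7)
The plan is to mirror the proof of \cref{boundary_curvature_estimate} essentially verbatim, but now applying a singular value decomposition to the bivector map
\[
T \coloneqq \mathrm{R}^{\T N} \circ (\D f \wedge \D f) \colon \bigwedge\nolimits^{2} \T_x M \to \bigwedge\nolimits^{2} \T_{f(x)} N.
\]
Unpacking \labelcref{eq:twisting_curvature}, pointwise on $M$ one has
\[
\Curv^N = -\tfrac{1}{2}\sum_{i<j} \clm(e_i)\clm(e_j)\,\clms\bigl(T(e_i \wedge e_j)\bigr),
\]
and since the two Clifford actions extend linearly to bivectors, this expression depends only on $T$ and not on the choice of orthonormal frame $(e_i)$. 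I would therefore pick orthonormal bases $(\omega_\alpha)$ of $\bigwedge^{2} \T_x M$ and $(\bar\omega_\alpha)$ of $\bigwedge^{2} \T_{f(x)} N$ realizing the SVD of $T$, so that $T(\omega_\alpha) = \sigma_\alpha \bar\omega_\alpha$ with $\sigma_\alpha \geq 0$ and $\sum_\alpha \sigma_\alpha = |T|_{\tr}$. Rewriting the sum in this adapted bivector basis gives
\[
\langle \Curv^N \Psi, \Psi\rangle = -\tfrac{1}{2}\sum_\alpha \sigma_\alpha \bigl\langle \clm(\omega_\alpha)\clms(\bar\omega_\alpha)\Psi,\Psi\bigr\rangle.
\]

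It then remains to establish the pointwise spectral bound $\lvert\langle \clm(\omega_\alpha)\clms(\bar\omega_\alpha)\Psi,\Psi\rangle\rvert \leq \lvert\Psi\rvert^2$ for each $\alpha$, in direct analogy with the per-term estimate $\lvert\langle \clm(e_i)\clms(\bar e_i)\psi,\psi\rangle\rvert \leq \lvert\psi\rvert^2$ used in the boundary lemma; summing over $\alpha$ and invoking $\sum_\alpha \sigma_\alpha = |T|_{\tr}$ then yields the inequality. For simple bivectors $\omega = e \wedge e'$ and $\bar\omega = \bar e \wedge \bar e'$ this per-term bound is a direct Clifford-algebra verification: using $\clm(v)^\ast = -\clm(v)$, $\clms(w)^\ast = \clms(w)$ and the mutual anticommutation of $\clm$ and $\clms$, the operator $\clm(e)\clm(e')\clms(\bar e)\clms(\bar e')$ squares to the identity and is self-adjoint, hence has operator norm one. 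The main obstacle is that the bivectors $\omega_\alpha, \bar\omega_\alpha$ produced by the abstract SVD need not be simple, and naively $\clm(\omega)\clms(\bar\omega)$ can have operator norm larger than $|\omega||\bar\omega|$. This is exactly what the matrix-norm framework of \cref{appendix:matrix_norms} is designed to handle: the trace norm $|T|_{\tr}$ is set up so that the decomposition may be refined into simple-bivector contributions without increasing the total mass, whence the per-term bound applies piece by piece and the lemma follows.
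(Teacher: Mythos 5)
Your proposal is structurally identical to the paper's proof: take the singular value decomposition $T(\omega_\alpha) = \lambda_\alpha\bar\omega_\alpha$ of $T = \mathrm{R}^{\T N}\circ(\D f\wedge\D f)$ in orthonormal bases of $\Lambda^2\T_x M$ and $\Lambda^2\T_{f(x)}N$, observe that the expression \labelcref{eq:twisting_curvature} for $\Curv^N$ is basis-invariant and thus becomes $-\frac{1}{2}\sum_\alpha\lambda_\alpha\clm(\omega_\alpha)\clms(\bar\omega_\alpha)$, and then invoke the per-term estimate $\lvert\langle\clm(\omega_\alpha)\clms(\bar\omega_\alpha)\Psi,\Psi\rangle\rvert\leq\lvert\Psi\rvert^2$. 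You correctly pin down that this per-term bound is the crux, and you are right to be suspicious of it when the singular bivectors are not simple — for non-simple unit bivectors the bound can genuinely fail. A concrete instance in $\Cl_{4,4}$: take $\omega = \bar\omega = \tfrac{1}{\sqrt{2}}(e_1\wedge e_2 + e_3\wedge e_4)$; then $\clm(\omega)$ and $\clms(\bar\omega)$ are commuting skew-adjoint operators of operator norm $\sqrt{2}$, their product is self-adjoint with $\pm 2$ in its spectrum, and for instance $\psi = e_1\wedge e_3 - i\,e_1\wedge e_4 - i\,e_2\wedge e_3 - e_2\wedge e_4 \in\bigwedge\R^4\otimes\C$ satisfies $\clm(\omega)\clms(\bar\omega)\psi = 2\psi$, so $\langle\clm(\omega)\clms(\bar\omega)\psi,\psi\rangle = 2\lvert\psi\rvert^2$.

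The resolution you then offer is, however, not correct. Writing a non-simple unit bivector $\omega_\alpha$ in canonical form $\sum_k\sigma_k\,e_{2k-1}\wedge e_{2k}$ with $\sum_k\sigma_k^2 = 1$ and applying the per-term bound to each simple piece gives a factor of $\sum_k\sigma_k = \lvert\clm(\omega_\alpha)\rvert_\op$, which is $\geq 1$ and strictly bigger than $1$ precisely when $\omega_\alpha$ is non-simple — the "total mass" is \emph{not} preserved, and nothing in \cref{appendix:matrix_norms} (which is only about Schatten--Hölder inequalities for linear maps, not about simple-bivector decompositions) repairs this. To be fair, the paper's own one-line "and thus" commits the same implicit per-term claim without further comment; the authors note after \cref{boundary_rigidity} that in their application $N$ is flat, so $\mathrm{R}^{\T N} = 0$ and the lemma is vacuous, and that they deliberately "stop here to keep the notation reasonably light." A complete proof in the stated generality needs a further algebraic input beyond the bare SVD — for instance the Bianchi symmetries of $\mathrm{R}^{\T N}$, or (as in Goette--Semmelmann) the non-negativity of the curvature operator, which yields a decomposition into rank-one terms along \emph{simple} bivectors. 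As written, both your argument and the paper's terse version leave this gap open.
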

 \begin{proof}
    Choose an orthonormal basis \((\omega_\alpha)\) of \(\Lambda^2 \T_x M\) such that \(\mathrm{R}^{\T N}\circ (\D{f} \wedge \D{f})(\omega_\alpha) = \lambda_\alpha \bar{\omega}_\alpha\), where \((\bar{\omega}_\alpha)\) is an orthonormal basis of \(\Lambda^2 \T_{f(x)} N\) and \(\lambda_\alpha \geq 0\).
    Then \(\lvert \mathrm{R}^{\T N}\circ (\D{f} \wedge \D{f}) \rvert_{\tr} = \sum_{\alpha} \lambda_\alpha\).
    It follows from \labelcref{eq:twisting_curvature} that
    \[
        \langle \Curv^N \Psi, \Psi \rangle = -\frac{1}{2}\sum_{\alpha} \lambda_\alpha \langle \clm(\omega_\alpha) \clms(\bar{\omega}_\alpha) \Psi, \Psi \rangle
    \]
    and thus \(\langle \Curv^N \Psi, \Psi \rangle \geq -\frac{1}{2} \lvert \mathrm{R}^{\T N} \circ (\D{f} \wedge \D{f})\rvert_{\tr} \lvert \Psi \rvert^2\).
 \end{proof}

 Combining \cref{interior_curvature_estimate,boundary_curvature_estimate} we thus obtain the main integral inequalities:
 
 \begin{proposition}\label{main_integral_inequality}
    For any section \(\Psi \in \Ct^\infty(M, S)\) which satisfies the boundary condition \labelcref{eq:boundary_condition}, we have the following inequality.
    \begin{alignat*}{2}
        \| \Dirac \Psi \|_{\Lp^2(M)}^2 &\geq \| \nabla \Psi \|_{\Lp^2(M)}^2 &&+ \frac{1}{4} \left(\scal \Psi, \Psi\right)_{\Lp^2(M)} - \frac{1}{2}\int_M \lvert \mathrm{R}^{\T N} \circ \D{f} \wedge \D{f} \rvert_{\tr} \lvert \Psi \rvert^2 \dV \\
    & &&+ \frac{1}{2} \int_{\partial M}  \left(\mean_{\partial M} - \lvert \Shape_{\partial N} \circ \D{f}|_{\tr} \right) |\Psi|^2 \dS \\
    & &&+ \frac{1}{4} \int_{\partial M}  \sigma_{\min}(\Shape_{\partial N} \circ \D{f}) \lvert \clms(U \blank) \Psi - s \clm(\blank) \Psi\rvert^2 \dS,
    \end{alignat*}
    where \(U_x \colon \T_{x}(\partial M) \to \T_{f(x)}(\partial N)\) is a measurable section of bundle isometries coming from a polar decomposition of \(\Shape_{\partial N} \circ \D_x f \colon \T_{x}(\partial M) \to \T_{f(x)}(\partial N)\).
\end{proposition}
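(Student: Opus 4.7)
The plan is to assemble the inequality by plugging the two pointwise curvature estimates (\cref{interior_curvature_estimate,boundary_curvature_estimate}) into the integrated Bochner--Lichnerowicz--Weitzenböck formula \labelcref{eq:schroedinger_lichnerowicz} from \cref{subsec:boundary_dirac}. Since \(S\) is the spinor bundle associated to a spin structure on \(\T M \oplus f^\ast \T N\), the relevant interior curvature operator splits as \(\Curv^S = \frac{\scal}{4} + \Curv^N\), as recalled in \cref{subsec:spin_maps}. Substituting this into \labelcref{eq:schroedinger_lichnerowicz} gives
\begin{align*}
\|\Dirac \Psi\|_{\Lp^2(M)}^2 = \|\nabla \Psi\|_{\Lp^2(M)}^2 &+ \tfrac{1}{4}(\scal \Psi, \Psi)_{\Lp^2(M)} + (\Curv^N \Psi, \Psi)_{\Lp^2(M)} \\
&+ \int_{\partial M} \tfrac{1}{2}\mean_{\partial M} |\Psi|^2 \dS - \int_{\partial M} \langle \BdDirac \Psi, \Psi\rangle \dS,
\end{align*}
which already accounts for the first two interior terms and the mean-curvature boundary term of the target inequality.

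Next I would bound the interior curvature term \((\Curv^N \Psi, \Psi)_{\Lp^2(M)}\) from below by integrating the pointwise estimate of \cref{interior_curvature_estimate}, yielding exactly the term \(-\tfrac{1}{2}\int_M |\mathrm{R}^{\T N} \circ \D f \wedge \D f|_{\tr} |\Psi|^2 \dV\). For the boundary, I would note that \(\Psi\) satisfies the local boundary condition \labelcref{eq:boundary_condition} by hypothesis, so \cref{boundary_curvature_estimate} applies pointwise to \(\psi = \Psi|_{\partial M}\); integrating its upper bound on \(\langle \BdDirac \Psi, \Psi\rangle\) and reversing the sign produces both the remaining \(-\tfrac{1}{2}|\Shape_{\partial N}\circ \D f|_{\tr}|\Psi|^2\) boundary term and the non-negative singular-value correction \(\tfrac{1}{4}\sigma_{\min}|\clms(U\cdot)\Psi - s\clm(\cdot)\Psi|^2\).

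Combining these substitutions gives the claimed inequality directly; the inequality direction is preserved because \cref{interior_curvature_estimate} is a lower bound on a term entering with a positive sign while \cref{boundary_curvature_estimate} is an upper bound on a term entering with a negative sign. The only minor point of care is measurability of the pointwise isometry \(U_x\) arising from the polar decomposition of \(\Shape_{\partial N}\circ \D_x f\) along \(\partial M\); since polar decompositions can be chosen measurably (and the integrand \(\sigma_{\min}|\clms(U\cdot)\Psi - s\clm(\cdot)\Psi|^2\) only depends on \(U_x\) through the image of its restriction to the kernel complement when \(\sigma_{\min}>0\), and is multiplied by \(\sigma_{\min}\) otherwise), this poses no essential obstacle. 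I do not anticipate any hard step: the work is entirely contained in the two preparatory lemmas, and this proposition is just their integrated synthesis.
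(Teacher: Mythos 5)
Your proposal is correct and coincides exactly with the paper's own argument: the paper states this proposition immediately after \cref{interior_curvature_estimate,boundary_curvature_estimate} with the remark that it follows by combining them, which is precisely the substitution into the integrated Bochner--Lichnerowicz--Weitzenböck formula \labelcref{eq:schroedinger_lichnerowicz} (with \(\Curv^S = \tfrac{\scal}{4} + \Curv^N\)) that you describe. The sign bookkeeping and the measurability remark on \(U_x\) are also as in the paper, so no gap.
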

\Cref{main_integral_inequality} together with the matrix Hölder inequality \labelcref{eq:trace_hoelder} yields the estimate of \textcite{GoetteSemmelmann,listing2010scalar,Lott} because if the curvature operator of \(N\) is non-negative, then \(|\mathrm{R}^{\T N}|_{\tr} = \frac{\scal_N}{2}\), and if the boundary is convex, then \(|\Shape_{\partial N}|_{\tr} = \mean_{\partial N}\).
In the following corollary, we focus attention on the curvature term at the boundary.

	\begin{corollary} \label{boundary_rigidity}
	        In particular, if the boundary \(\partial N\) is convex and \(\lambda \colon \partial M \to (0,\infty)\) is a function with \(\lambda \geq \lvert \D{f} \rvert_{\op}\), then
	    \begin{alignat*}{2}
	        \| \Dirac \Psi \|_{\Lp^2(M)}^2 &\geq \| \nabla \Psi \|_{\Lp^2(M)}^2 &&+ \frac{1}{4} \left(\scal_M \Psi, \Psi\right)_{\Lp^2(M)} - \frac{1}{2}\int_M \lvert \mathrm{R}^{\T N} \circ \D{f} \wedge \D{f} \rvert_{\tr} \lvert \Psi \rvert^2 \dV \\
	    & &&+ \frac{1}{2} \int_{\partial M}  \left(\mean_{\partial M} - (\mean_{\partial N} \circ f) \cdot \lambda  \right) |\Psi|^2 \dS \\
	    & &&+ \frac{1}{4} \int_{\partial M}  \frac{(\sigma_{\min}(\Shape_{\partial N})\circ f)}{\lambda} \lvert \D{f} - \lambda U \rvert_2^2 |\Psi|^2 \dS \\
	    & &&+ \frac{1}{4} \int_{\partial M}  \sigma_{\min}(\Shape_{\partial N} \circ \D{f}) \lvert \clms(U \blank) \Psi - s \clm(\blank) \Psi\rvert^2 \dS,
	\end{alignat*}
	where \(U_x \colon \T_{x} \partial M \to \T_{f(x)}(\partial N)\) is a measurable section of bundle isometries coming from a polar decomposition \embparen{in the sense of Appendix~\labelcref{appendix:matrix_norms}} of the linear map \(\Shape_{\partial N} \circ \D_x f \colon \T_{x} \partial M \to \T_{f(x)}(\partial N)\).

	In particular, if in this situation \(\Dirac \Psi = 0\), \(\scal_M \geq  \lvert \mathrm{R}^{\T N} \circ \D{f} \wedge \D{f} \rvert_{\tr}\) and \(\mean _{\partial M} \geq (\mean_{\partial N} \circ f) \cdot \lambda\), then \(\nabla \Psi = 0\).
	If furthermore \(\partial N\) is strictly convex, then \(\lambda^{-1} \D{f} = U \colon \T (\partial M) \to \T(\partial N)\) is a bundle isometry and we have \(\clms(\D{f}(\xi)) \Psi = s \lambda \clm(\xi) \Psi\) on \(\partial M\) for any smooth vector field \(\xi\) tangent to \(\partial M\).
	 \end{corollary}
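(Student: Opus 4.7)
The plan is to start from \cref{main_integral_inequality} and refine only the boundary trace-norm term $|\Shape_{\partial N}\circ \D{f}|_{\tr}$, leaving the interior integrand untouched. Convexity of $\partial N$ makes $\Shape_{\partial N}$ positive semi-definite, so $|\Shape_{\partial N}|_{\tr} = \mean_{\partial N}$. For the composition with $\D{f}$ I would invoke the quantitative matrix H\"older inequality \cref{trace_inequality_polar_decomposition}: for any $\lambda \geq |\D{f}|_\op$ and any isometry $U$ coming from a polar decomposition of $\Shape_{\partial N}\circ \D{f}$,
\[
|\Shape_{\partial N}\circ \D{f}|_{\tr} \;\leq\; \mean_{\partial N}\cdot \lambda \;-\; \frac{\sigma_{\min}(\Shape_{\partial N})}{2\lambda}\,|\D{f}-\lambda U|_2^2.
\]
Substituting this into the integrand $\tfrac{1}{2}(\mean_{\partial M} - |\Shape_{\partial N}\circ \D{f}|_{\tr})|\Psi|^2$ of \cref{main_integral_inequality} produces the $(\mean_{\partial M}-\mean_{\partial N}\circ f\cdot\lambda)|\Psi|^2$ contribution together with the additional non-negative term $\tfrac{\sigma_{\min}(\Shape_{\partial N})}{4\lambda}|\D{f}-\lambda U|_2^2|\Psi|^2$, yielding the displayed inequality.

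For the rigidity portion, assume $\Dirac \Psi = 0$ and that the sign hypotheses on $\scal_M$, the interior twisting curvature, and $\mean_{\partial M}$ all hold. Then the left-hand side of the refined inequality vanishes while every term on the right is pointwise non-negative, so each must vanish separately. The Bochner term gives $\nabla \Psi = 0$. The first boundary correction gives $\sigma_{\min}(\Shape_{\partial N})|\D{f}-\lambda U|_2^2 \, |\Psi|^2 \equiv 0$ on $\partial M$, and the second gives $\sigma_{\min}(\Shape_{\partial N}\circ \D{f})\,|\clms(U\blank)\Psi - s\clm(\blank)\Psi|^2 \equiv 0$. A parallel spinor either vanishes identically or is nowhere zero on each connected component, so outside the trivial case $\Psi\equiv 0$ we may assume $|\Psi|>0$ pointwise. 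Strict convexity of $\partial N$ then gives $\sigma_{\min}(\Shape_{\partial N})>0$ on all of $\partial M$, which forces $\D{f}=\lambda U$, so $\lambda^{-1}\D{f}=U$ is a bundle isometry. Consequently $\sigma_{\min}(\Shape_{\partial N}\circ \D{f}) = \lambda\sigma_{\min}(\Shape_{\partial N}) > 0$, so the remaining boundary term forces $\clms(U\xi)\Psi = s\clm(\xi)\Psi$ pointwise; multiplying by $\lambda$ and using $\D{f}(\xi)=\lambda U\xi$ yields the claimed identity $\clms(\D{f}(\xi))\Psi = s\lambda\clm(\xi)\Psi$.

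The only non-routine ingredient is the quantitative H\"older inequality \cref{trace_inequality_polar_decomposition} together with a measurable choice of the polar-decomposition isometry $U_x$ on $\partial M$; these are precisely the points handled by the appendix on matrix norms. Beyond that, the argument is direct substitution into \cref{main_integral_inequality} combined with exploiting the non-negativity of each resulting summand, so I expect no further obstacle.
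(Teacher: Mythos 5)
Your proof is correct and matches the approach the paper indicates (the paper does not write out an explicit proof of this corollary, but the surrounding text prescribes exactly the route you take: substitute the quantitative matrix Hölder estimate of \cref{trace_inequality_polar_decomposition}, with $S=\Shape_{\partial N}$ and $T=\D f$, into the boundary term of \cref{main_integral_inequality}, using $|\Shape_{\partial N}|_{\tr}=\mean_{\partial N}$ by convexity). Your treatment of the rigidity case is also sound, including the observation that $\Psi$ parallel and nontrivial has constant nonzero length, that strict convexity makes $\sigma_{\min}(\Shape_{\partial N})>0$ (forcing $\D f=\lambda U$), and that $\sigma_{\min}(\Shape_{\partial N}\circ\D f)=\lambda\,\sigma_{\min}(\Shape_{\partial N})$ because composing with an isometry does not change singular values. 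One small caveat worth being aware of (it affects the paper's wording, not your substitution): as literally written, the hypothesis $\scal_M\geq\lvert\mathrm R^{\T N}\circ\D f\wedge\D f\rvert_{\tr}$ does not by itself force the interior integrand $\tfrac14\scal_M-\tfrac12\lvert\mathrm R^{\T N}\circ\D f\wedge\D f\rvert_{\tr}$ to be non-negative pointwise unless the coefficient is interpreted with a factor of two; this is vacuous in the paper's actual application where $\mathrm R^{\T N}=0$, but your phrase \enquote{every term on the right is pointwise non-negative} inherits this issue verbatim from the corollary's statement.
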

	 \begin{proof}
	     This is an immediate consequence of \cref{main_integral_inequality} and \cref{trace_inequality_polar_decomposition} from Appendix~\labelcref{appendix:matrix_norms}.
	     Indeed, since \(\partial N\) is convex, \(\Shape_{\partial N}\) is pointwise self-adjoint non-negative and \(|\Shape_{\partial N}|_{\tr} = \mean_{\partial N}\).
	     Applying \cref{trace_inequality_polar_decomposition} pointwise with \(S = (\Shape_{\partial N})_{f(x)}\), \(T = \D_x f\) and \(\lambda(x)\), we obtain an estimate for \(|\Shape_{\partial N} \circ \D f|_{\tr}\) in terms of \((\mean_{\partial N}\circ f)\cdot \lambda\) and \(\lvert \D f - \lambda U\rvert_2^2\), which yields the claimed inequality after inserting it into \cref{main_integral_inequality} and integrating over \(\partial M\).
	 \end{proof}
	 
	 In our main application, the curvature of \(N\) vanishes, so the interior curvature term \(\lvert \mathrm{R}^{\T N} \circ \D{f} \wedge \D{f} \rvert_{\tr}\) drops out completely. 
	 This is why we stop here to keep the notation reasonably light, but an analogous analysis can be applied to this interior term, including a more refined version of \cref{interior_curvature_estimate} that keeps some of the dropped terms in the same way as \cref{boundary_curvature_estimate} does for the boundary term.

\section{Almost rigidity for maps to Euclidean domains}\label{sec:almost rigidity}

In this section, we prove our main almost rigidity theorem for maps to Euclidean domains and deduce rigidity for Gromov's hyperspherical radius estimate.

\begin{proposition}\label{almost_rigidity_theorem}
    Let \(\Omega \subseteq \R^n\) be a bounded domain with smooth strictly convex boundary \(\partial \Omega\).
    Let \(M\) be a connected compact Riemannian spin manifold with boundary such that \(\scal_M \geq 0\). 
    Let \(\partial_0 M \subseteq \partial M\) be a connected component of the boundary.
    Let \(f_i \colon \partial_0 M \to \partial \Omega\) be a sequence of smooth maps such that, for each \(i \in \N\), we have that
    \begin{itemize}
        \item \(\operatorname{Lip}(f_i) \leq 1+\frac{1}{i}\),
        \item \(\mean_{\partial M} \geq 0\) and, if \(\partial M \neq \partial_0 M\), \(\mean_{\partial M}\) does not vanish identically on \(\partial M \setminus \partial_0 M\),
        \item \(\mean_{\partial M} \geq \mean_{\partial \Omega} \circ f_i - \frac{1}{i}\) on \(\partial_0 M\),
        \item \(\deg(f_i) \neq 0\).
    \end{itemize}
    Then \(\partial_0 M = \partial M\) and there exists a smooth isometry \(f \colon \partial M \to \partial \Omega\) such that \(\II_{\partial M} = f^\ast \II_{\partial \Omega}\) and a subsequence \((f_{i_k})_{k \in \N}\) such that \(f_{i_k} \to f\) in \(\SobolevW^{1,p}(\partial M, \R^n)\) for every \(p < \infty\).
 \end{proposition}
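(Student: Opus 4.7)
The plan is to apply the integral inequality of \cref{boundary_rigidity} to a sequence of harmonic spinors produced by the index formula of \cref{example:domain}, extract subsequential limits, and combine the spinor compactness with the uniform Lipschitz bound on the \(f_i\) to deduce rigidity in the limit.

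\textbf{Step 1 (harmonic spinors and integral estimate).} For each \(i\), use the Dirac bundle \(S \to M\) of \cref{example:domain} with \(N = \Omega\) and boundary condition \labelcref{eq:boundary_condition} defined by \(f_i\) with \(s = 1\). Because \(\deg(f_i) \neq 0\) and \(\ind(\Dirac,1) = \deg(f_i)\) (\cref{the_index_theorem}), there exists a non-trivial harmonic spinor \(\Psi_i\) satisfying the boundary condition; normalize \(\|\Psi_i\|_{\Lp^2(\partial M)} = 1\). Set \(\lambda_i \coloneqq 1 + 1/i \geq |\D f_i|_{\op}\). Apply \cref{boundary_rigidity} with these data. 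Since \(\mathrm{R}^{\T \R^n} = 0\), \(\scal_M \geq 0\), since strict convexity of \(\partial \Omega\) yields \(\sigma_{\min}(\Shape_{\partial \Omega}) \geq \sigma_0 > 0\), and since \(\mean_{\partial M} - \lambda_i(\mean_{\partial \Omega} \circ f_i) \geq -C/i\) for a constant \(C\) depending only on \(\Omega\), one obtains
\[
  \|\nabla \Psi_i\|_{\Lp^2(M)}^2 + \int_{\partial M} |\D f_i - \lambda_i U_i|^2 |\Psi_i|^2 \dS + \int_{\partial M} \sigma_{\min}(\Shape_{\partial \Omega} \circ \D f_i) \lvert \clms(U_i \blank) \Psi_i - \clm(\blank) \Psi_i \rvert^2 \dS = O(1/i).
\]

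\textbf{Step 2 (compactness).} The boundary normalization and \(\|\nabla \Psi_i\|_{\Lp^2(M)} \to 0\) yield a uniform \(\SobolevH^1(M)\)-bound via a Poincaré--trace inequality. A subsequence thus converges strongly in \(\SobolevH^1(M)\), and in \(\Lp^2(\partial M)\), to a parallel spinor \(\Psi_\infty\) with \(\|\Psi_\infty\|_{\Lp^2(\partial M)} = 1\); in particular \(|\Psi_\infty|^2 \equiv c > 0\) is constant on all of \(M\). Simultaneously, Arzelà--Ascoli applied to the uniformly Lipschitz family \(\{f_i\}\) yields a subsequential uniform limit \(f \colon \partial M \to \partial \Omega\), which is \(1\)-Lipschitz and inherits \(\mean_{\partial M} \geq \mean_{\partial \Omega} \circ f\). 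Passing to the limit in the boundary condition, \(\Psi_\infty\) satisfies \(\clm(\nu_M) \Psi_\infty = \clms(\nu_\Omega \circ f) \Psi_\infty\).

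\textbf{Step 3 (from weighted to unweighted \(\SobolevW^{1,p}\)-convergence).} Egoroff's theorem, together with the strict positivity \(|\Psi_\infty|^2 \equiv c > 0\), lets one drop the spinor weight in the main estimate, giving \(\int_{\partial M}|\D f_{i_k} - \lambda_{i_k} U_{i_k}|^2 \dS \to 0\). Since \(f_{i_k} \to f\) uniformly, distributional identification forces the \(\Lp^2\)-limit of \(\D f_{i_k}\) to equal \(\D f\) almost everywhere, so \(\D f\) is a.e.\ an isometry. The uniform bound \(|\D f_i| \leq 2\) then promotes \(\Lp^2\)-convergence to \(\Lp^p\)-convergence for every \(p < \infty\) via dominated convergence, whence \(f_{i_k} \to f\) in \(\SobolevW^{1,p}(\partial M, \R^n)\). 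As \(f\) is a metric isometry between smooth Riemannian manifolds, Myers--Steenrod yields smoothness.

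\textbf{Step 4 (rigidity of the second fundamental form).} With \(f\) now smooth and isometric, the rigidity portion of \cref{boundary_rigidity} applies to \((f, \Psi_\infty)\) with \(\lambda = 1\) and produces the Clifford identity \(\clms(\D f(\xi)) \Psi_\infty = \clm(\xi) \Psi_\infty\) for every vector field \(\xi\) tangent to \(\partial M\). Combined with \(\clm(\nu_M) \Psi_\infty = \clms(\nu_\Omega \circ f) \Psi_\infty\), the vanishing \(\nabla \Psi_\infty = 0\), and the Gauss formulas expressing \(\nabla^\partial \Psi_\infty\) on both sides in terms of \(\Shape_{\partial M}\) and \((\D f)^\ast \Shape_{\partial \Omega}\), the two shape operators are forced to coincide, yielding \(\II_{\partial M} = f^\ast \II_{\partial \Omega}\). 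I expect the main obstacle to be Step 3, namely removing the spinor weight to obtain unweighted \(\SobolevW^{1,p}\)-control of \(\D f_i\); this passage hinges on the limit spinor \(\Psi_\infty\) being nowhere vanishing, which is why it is crucial to normalize in \(\Lp^2(\partial M)\) rather than in \(\Lp^2(M)\).
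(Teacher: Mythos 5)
Your overall strategy mirrors the paper's: produce harmonic spinors via the index formula, feed them into \cref{boundary_rigidity}, send \(i \to \infty\), and extract a parallel spinor together with a limiting isometry. Steps~1 and 2 are sound (the boundary normalization works in place of the paper's interior \(\Lp^2\)-normalization, provided you do prove the Poincaré--trace inequality \(\|\Psi\|_{\Lp^2(M)} \lesssim \|\nabla\Psi\|_{\Lp^2(M)} + \|\Psi\|_{\Lp^2(\partial M)}\) you are implicitly using), and Step~4 would be fine once \(f\) is known to be smooth.

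The genuine gap is in Step~3. From the weighted estimate you obtain \(\|\D f_{i_k} - U_{i_k}\|_{\Lp^2(\partial M)} \to 0\), but this does \emph{not} by itself imply that \(\D f_{i_k}\) (or, equivalently, \(U_{i_k}\)) converges strongly in \(\Lp^2\). You speak of \enquote{the \(\Lp^2\)-limit of \(\D f_{i_k}\)}, but \(\Lp^\infty\)-boundedness only yields a weak-\(*\) limit, and the distributional identification \(\D f_{i_k} \rightharpoonup \D f\) is weak convergence. Isometric embeddings are the extreme points of the unit operator-norm ball, and a weak limit of extreme-point-valued sections need not be extreme-point-valued (oscillating isometries can weakly converge to a contraction), so you cannot conclude that \(\D f\) is a.e.\ an isometry from this reasoning. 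What is missing is precisely the third term of the estimate, which you compute in Step~1 and then never use. From the vanishing of \(\sqrt{\sigma_{\min}}\bigl(\clms(U_i \,\blank)\Psi_\infty - \clm(\blank)\Psi_\infty\bigr)\) together with strict convexity and the pointwise injectivity of the smooth bundle map \(T \mapsto \clms(T\,\blank)\Psi_\infty\), one deduces that \(U_i\) converges pointwise a.e.\ to a measurable section \(U\) of isometric embeddings; only then does \(\D f_i \to U\) a.e., hence in \(\Lp^p\) by dominated convergence, and hence \(\D f = U\). Without this mechanism Step~3 is not a proof.

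A secondary point: you invoke Myers--Steenrod to upgrade \(f\) to a smooth isometry. But Myers--Steenrod needs \(f\) to be a metric (distance-preserving) bijection, whereas you only have a Lipschitz map whose differential is a.e.\ orthogonal; passing from the latter to the former is itself a nontrivial rigidity statement that would require justification. The paper sidesteps this by using the Clifford identity \(\clms(\D f\,\xi)\Psi_\infty = \clm(\xi)\Psi_\infty\) together with the smooth bundle embedding \(T \mapsto \clms(T\,\blank)\Psi_\infty\): since the right-hand side and \(\Psi_\infty\) are smooth, \(\D f\) is smooth, hence \(f\) is a smooth local isometry, hence an isometry as \(\partial\Omega\) is a sphere. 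Once you repair Step~3 along the lines above, this smoothness argument comes for free, and your Step~4 then goes through.
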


 \begin{proof}
    Fix \(y_0 \in \partial\Omega\). Define \(\tilde f_i \colon \partial M \to \partial\Omega\) by
    \[
        \tilde f_i|_{\partial_0 M} = f_i, \qquad \tilde f_i \equiv y_0 \text{ on every component of \(\partial M \setminus \partial_0 M\).}
    \]
    Extend the maps \(\tilde f_i\) to obtain maps \(F_i \colon (M, \partial M) \to (\Omega, \partial \Omega)\) of the same degrees and such that \(\D{F_i}(\nu_M) = \nu_\Omega\).
    Since \(\tilde f_i\) is constant on each component of \(\partial M \setminus \partial_0 M\), those components contribute degree \(0\), so \(\deg(F_i) = \deg(\tilde f_i) = \deg(f_i) \neq 0\).
    Since by \cref{example:domain} we have \(\ind(\Dirac, 1) = \deg(F_i)\), we find non-trivial harmonic spinors \(\Psi_i\) with \(\|\Psi_i\|_{\Lp^2(M)}^2 = 1\) satisfying the boundary condition \(\clm(\nu_M) \Psi_i = \clms_i(\nu_\Omega) \Psi_i\).
    Define \(\lambda_i \colon \partial M \to (0,\infty)\) by
    \[
        \lambda_i \coloneqq \begin{cases}
            1+\frac{1}{i}, & \text{on \(\partial_0 M\),}\\[2pt]
            \frac{1}{i}, & \text{on \(\partial M \setminus \partial_0 M\).}
        \end{cases}
    \]
    Then \(\lambda_i \geq \lvert \D{\tilde f_i} \rvert_{\op}\) on \(\partial M\): on \(\partial_0 M\) this is the Lipschitz assumption, while on \(\partial M \setminus \partial_0 M\) it follows from \(\D{\tilde f_i} = 0\).
    The estimate given by \cref{boundary_rigidity} (applied with \(\lambda = \lambda_i\) and \(s = 1\)) together with the fact that there exists a fixed constant \(\kappa > 0\), independent of \(i\), such that
    \[
        \mean_{\partial M} - (\mean_{\partial\Omega}\circ \tilde f_i)\lambda_i
        =
        \mean_{\partial M} - \frac{\mean_{\partial\Omega}(y_0)}{i}
        \geq
        \mean_{\partial M} - \frac{\kappa}{i}
        \quad \text{on \(\partial M\setminus\partial_0 M\)}
    \]
    and
    \[
        (\mean_{\partial\Omega} \circ f_i)\left(1+\frac{1}{i}\right) - \mean_{\partial M} \leq \frac{\kappa}{i}
        \quad \text{on \(\partial_0 M\),}
    \]
    shows that for all \(i\) we have
    \begin{align}
        \frac \kappa i \int_{\partial M}\lvert \Psi_i \rvert^2 \dS \geq \| &\nabla \Psi_i \|_{\Lp^2(M)}^2
        + \frac{1}{2}\int_{\partial M \setminus \partial_0 M}\mean_{\partial M}\lvert \Psi_i \rvert^2 \dS  \label{eq:applied_almost_rigid_estimate}\\
        &+ \|\lvert \D{f_i} - (1+\tfrac{1}{i}) U_i \rvert_2 |\Psi_i|\|_{\Lp^2(\partial_0 M)}^2 \nonumber \\
        &+ \left\| \sqrt{\sigma_{\min}(\Shape_{\partial\Omega} \circ \D{f_i})} \left( \clms(U_i \blank) \Psi_i - \clm(\blank)\Psi_i \right) \right\|_{\Lp^2(\partial_0 M)}^2. \nonumber
    \end{align}
    Here \(U_i\) denotes a measurable bundle isometry \(\T (\partial M) \to \tilde f_i^\ast \T (\partial \Omega)\) such that \(\Shape_{\partial\Omega} \circ \D{\tilde f_i} = P_i \circ U_i\) is a polar decomposition of \(\Shape_{\partial\Omega} \circ \D{\tilde f_i}\).
    Note that while \(U_i\) is uniquely determined and smooth on any subset where \(\D{\tilde f_i}\) is invertible because we have assumed \(\Shape_{\partial\Omega}\) to be strictly positive, we may have non-uniqueness and jumps in the presence of non-regular points of \(\tilde f_i\), which is why a priori we can only assume \(U_i\) to be a measurable section.
    
    By the trace inequality applied to the function $|\Psi_i|$, we also obtain a constant $C_{\mathrm{T}} \geq 0$ not depending on $i$ such that 
    \begin{align}
        \int_{\partial M}|\Psi_i|^2 \dS \le C_{\mathrm{T}}\|\Psi_i\|_{\SobolevW^{1,2}(M)}^2 = C_{\mathrm{T}} \left( 1 + \| \nabla \Psi_i \|^2_{\Lp^2(M)}\right) \label{eq:trace_estimate}
    \end{align}
    for all \(i\).
    Plugging \labelcref{eq:trace_estimate} into \labelcref{eq:applied_almost_rigid_estimate} shows that the right-hand side of \labelcref{eq:applied_almost_rigid_estimate} tends to zero as \(i \to \infty\), that is,
    
    \begin{align}
        \| \nabla \Psi_i \|_{\Lp^2(M)} &\to 0, \label{eq:nabla_to_zero}\\
        \|\lvert \D{f_i} - (1+\tfrac{1}{i}) U_i \rvert_2 |\Psi_i|\|_{\Lp^2(\partial_0 M)} &\to 0, \label{eq:differential_almost_iso} \\
        \| \sqrt{\sigma_{\min}(\Shape_{\partial\Omega} \circ \D{f_i})} \left( \clms(U_i \blank) \Psi_i - \clm(\blank)\Psi_i \right)\|_{\Lp^2(\partial_0 M)} &\to 0, \label{eq:clifford_almost_equal}\\
        \int_{\partial M\setminus\partial_0 M}\mean_{\partial M}|\Psi_i|^2 \dS &\to 0. \label{eq:mean_curvature_control}
    \end{align}
    
    In particular, the sequence \((\Psi_i)\) is uniformly bounded in \(\SobolevW^{1,2}(M)\), so after passing to a subsequence we may assume that \(\Psi_i \to \Psi\) in \(\Lp^2(M)\) for some spinor \(\Psi\) with \(\| \Psi \|_{\Lp^2(M)} = 1\) by the Rellich theorem.
    But then \labelcref{eq:nabla_to_zero} further implies that this convergence already takes place in \(\SobolevW^{1,2}(M)\) and \(\nabla \Psi = 0\), in particular \(\Psi\) is smooth on all of \(M\) and \(\lvert \Psi \rvert\) is constant.
     Moreover, by the trace theorem, 
    \begin{equation}
        \Psi_i|_{\partial M} \to \Psi|_{\partial M} \text{ in \(\Lp^2(\partial M)\).} \label{eq:boundary_convergence}
    \end{equation}
    Assume for contradiction that \(\partial M \setminus \partial_0 M \neq \emptyset\).
    By assumption, \(\mean_{\partial M} \geq 0\) on \(\partial M \setminus \partial_0 M\) and \(\mean_{\partial M}\) is not identically zero there, so
    \[
        \int_{\partial M \setminus \partial_0 M}\mean_{\partial M}\dS > 0.
    \]
    Since \(M\) is connected and \(\Psi\) is a non-trivial parallel spinor, \(|\Psi|\) is a positive constant on \(M\).
    Thus by \labelcref{eq:boundary_convergence} we obtain
    \[
        \int_{\partial M \setminus \partial_0 M}\mean_{\partial M}|\Psi_i|^2\dS
        \to
        \int_{\partial M \setminus \partial_0 M}\mean_{\partial M}|\Psi|^2\dS
        =
        |\Psi|^2\int_{\partial M \setminus \partial_0 M}\mean_{\partial M}\dS > 0,
    \]
    contradicting \labelcref{eq:mean_curvature_control}.
    Therefore \(\partial_0 M = \partial M\), and hence \(\tilde f_i = f_i\) on \(\partial M\).
    Since \(\D{f}_i\) and \(U_i\) are uniformly bounded in \(\Lp^\infty(\partial M)\), the convergence \labelcref{eq:boundary_convergence} together with \labelcref{eq:differential_almost_iso,eq:clifford_almost_equal} means that as \(i \to \infty\),
    \begin{align*}
        \|\D{f_i} - U_i \|_{\Lp^2(\partial M)} &\to 0, \\
        \| \sqrt{\sigma_{\min}(\Shape_{\partial\Omega} \circ \D{f_i})} \left( \clms(U_i \blank) \Psi - \clm(\blank)\Psi \right)\|_{\Lp^2(\partial M)} &\to 0, 
    \end{align*}
    respectively.
    After passing to a further subsequence, we can then ensure that pointwise almost everywhere on \(\partial M\) we have
    \begin{align}
        |\D{f_i} - U_i |^2 &\to 0, \label{eq:differential_close_to_isometry}\\
        \sigma_{\min}(\Shape_{\partial\Omega} \circ \D{f_i})\ | \clms(U_i \blank) \Psi - \clm(\blank)\Psi|^2 &\to 0. \nonumber
    \end{align}
    At a point \(x \in \partial M\) where \(|\D_x{f_i} - (U_i)_x | \to 0\), we have 
    \[\liminf_{i \to \infty} \sigma_{\min}((\Shape_{\partial\Omega} \circ \D{f_i})_x) > 0 \]
    because \(\liminf_{i \to \infty} \sigma_{\min}((\Shape_{\partial \Omega})_{f_i(x)}) > 0\) everywhere due to strict convexity of the boundary. 
    So we deduce that
    \begin{equation}
        |  \clms(U_i \blank) \Psi - \clm(\blank)\Psi | \to 0 \quad \text{pointwise almost everywhere on \(\partial M\).}\label{eq:clifford_action_convergence}
    \end{equation}
    Next, \labelcref{eq:clifford_action_convergence} implies that there exists a measurable section \(U\) of \(\Hom(\T (\partial M), \R^n)\) which is a fiberwise isometric embedding \(\T_x(\partial M) \hookrightarrow \R^n\) such that \(U_i \to U\) pointwise almost everywhere.
    This is because we have an injective bundle map 
    \begin{equation}
         \Hom(\T_x(\partial M) , \R^n) \hookrightarrow \Hom(\T_x(\partial M), S_x), \quad T \mapsto \clms(T \blank) \Psi.  \label{eq:bundle_embedding}
    \end{equation}
    Together with \labelcref{eq:differential_close_to_isometry} this implies \(\D{f}_i \to U\) pointwise almost everywhere on \(\partial M\).
    Again using that \(\D{f_i}\) and \(U\) are uniformly bounded in \(\Lp^\infty(\partial M)\), this implies that \(\D{f}_i \to U\) in \(\Lp^p(\partial M, \Hom(\T (\partial M) , \R^n))\) for every \(p < \infty\) by dominated convergence.
    
    Finally, we use Arzelà--Ascoli to pass to another subsequence such that \(f_i \to f\) in \(\Ct^0\), where \(f \colon \partial M \to \partial \Omega\) is a Lipschitz map.
    Since we already know that \(\D{f_i}\) converges in \(\Lp^p\), it follows that \((f_i)\) is a Cauchy sequence also in \(\SobolevW^{1,p}(\partial M, \R^n)\) and hence the convergence \(f_i \to f\) takes place in \(\SobolevW^{1,p}(\partial M, \R^n)\) for every \(p < \infty\).
    Furthermore, it follows that \(\D{f} = U\) almost everywhere and so \(\D{f}\) is an isometry almost everywhere. 
    
    Finally, we deduce from \labelcref{eq:clifford_action_convergence} that
    \begin{equation} 
        \clms(\D{f}(\xi)) \Psi = \clm(\xi) \Psi \label{eq:clifford_action}
    \end{equation}
    almost everywhere for any smooth vector field \(\xi\) tangent to \(\partial M\).
    Now since the parallel spinor \(\Psi\) is smooth, this means already that \(\D{f} \colon \T (\partial M) \to \R^n\) is smooth, again because of the smooth bundle embedding \labelcref{eq:bundle_embedding}.
    Thus \(f\) is smooth on \(\partial M\).
    But then \(f \colon \partial M \to \partial \Omega\) is a smooth local isometry and hence an isometry because \(\partial \Omega\) is diffeomorphic to the sphere.
    
    To obtain the result on the second fundamental forms, first observe that \(\Psi\) also satisfies the boundary condition \(\clms(\nu_\Omega) \Psi = \clm(\nu_M)\Psi\), where \(\clms(\nu_\Omega)\) is defined in terms of the limiting map \(f\).
    Differentiating this equality in the direction of a vector field \(\xi\) tangential to \(\partial M\), we obtain 
    \[
        \clm(\nabla_{\xi} \nu_M) \Psi = \clms(\nabla_{\D{f}(\xi)} \nu_\Omega) \Psi  \underset{\labelcref{eq:clifford_action}}{=} \clm((\D{f})^{-1}\nabla_{\D{f}(\xi)} \nu_\Omega) \Psi,  
    \]
    and thus \(\nabla_{\xi} \nu_M = (\D{f})^{-1}\nabla_{\D{f}(\xi)} \nu_\Omega\) which proves the desired statement \(\II_{\partial M} = f^{\ast} \II_{\partial \Omega}\).
 \end{proof}

The next proposition already follows from \cite[Theorem 3.2]{wang2023gromovs}.
We include a self-contained proof, which does not make use of the analysis on manifolds with corners for the odd-dimensional case.
\begin{proposition}\label{cor:flat}
    Let \(\Omega \subseteq \R^n\) be a bounded domain with smooth strictly convex boundary \(\partial \Omega\).
    Let \(M\) be a connected compact Riemannian spin manifold with boundary such that \(\scal_M \geq 0\).
    Let $f\colon\partial M\to\partial\Omega$ be an isometry satisfying \(\II_{\partial M} = f^\ast \II_{\partial \Omega}\).
    Then $M$ is isometric to $\Omega$.
\end{proposition}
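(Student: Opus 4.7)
The plan is to apply the rigidity statement at the end of \cref{boundary_rigidity} to obtain a parallel spinor on $M$ satisfying a Clifford identity on the boundary, and then use this data to construct a developing map realizing the isometry $M \to \Omega$. The crucial point is that in the present setting all the geometric hypotheses of that rigidity statement are saturated: $\mathrm{R}^{\T\Omega} = 0$, $\scal_M \geq 0$ by assumption, and $\mean_{\partial M} = \mean_{\partial \Omega} \circ f$ follows from $\II_{\partial M} = f^\ast \II_{\partial \Omega}$.

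First, extend $f$ smoothly to a map $F\colon (M,\partial M) \to (\Omega, \partial\Omega)$ with $\D F(\nu_M) = \nu_\Omega$ along $\partial M$; such an extension exists because $f$ is a boundary isometry with matching shape operators. Since $f$ is a diffeomorphism of boundaries, $\deg(F) = \deg(f) = \pm 1$, so the index formula of \cref{example:domain} produces a non-trivial harmonic spinor $\Psi$ satisfying the local boundary condition $\clm(\nu_M)\Psi = \clms(\nu_\Omega)\Psi$. Invoking \cref{boundary_rigidity} with $s = 1$ and $\lambda \equiv 1$ then gives $\nabla\Psi = 0$ (so $\lvert \Psi \rvert$ is a positive constant) and
\[
\clms(\D f(\xi))\Psi = \clm(\xi)\Psi \quad\text{on } \partial M
\]
for every vector field $\xi$ tangent to $\partial M$; combined with the boundary condition, this upgrades to $\clms(\D F(X))\Psi = \clm(X)\Psi$ for every $X \in \T_x M$, $x \in \partial M$.

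Now for each standard basis vector $e_j \in \R^n$, viewed as a constant section of $f^\ast \T\R^n$, the Clifford action $\clms(e_j)$ is parallel because the connection on $f^\ast \T\R^n$ is trivial; hence each $\clms(e_j)\Psi$ is a parallel section of $S$. Define real $1$-forms on $M$ by
\[
\omega^j(X) = \lvert \Psi \rvert^{-2}\,\Re\,\langle \clm(X)\Psi,\, \clms(e_j)\Psi\rangle,\qquad j = 1, \ldots, n.
\]
These are parallel, and hence closed. A brief Clifford-algebra computation using the anticommutation relation $\{\clms(v), \clms(w)\} = 2\langle v,w\rangle$ together with the pointwise identity above yields $\omega^j \rvert_{\partial M} = F^\ast dx^j$, so $(\omega^1, \ldots, \omega^n)$ is an orthonormal coframe at every boundary point. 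Since $M$ is connected and parallel transport preserves the metric, $(\omega^j)$ is an orthonormal coframe on all of $M$, and in particular $M$ is flat.

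Finally, integrating the closed $1$-forms $\omega^j$ on the universal cover $\widetilde M$ produces a local isometry $\Phi\colon \widetilde M \to \R^n$ whose restriction to each boundary component factors through $f$. Since $\partial\Omega$ is strictly convex and $f\colon \partial M \to \partial\Omega$ is a diffeomorphism, a standard monodromy and development argument shows that $\Phi$ descends to a global isometry $M \to \Omega$. This last step is the main remaining obstacle: one must carefully rule out the possibility that $\pi_1(M) \neq 0$ introduces non-trivial periods and verify that the image of the development coincides with $\Omega$. The strict convexity of $\partial\Omega$ together with the compactness of $M$ is crucial here.
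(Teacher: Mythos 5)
Your proof of flatness is essentially identical to the paper's: you define the parallel $1$-forms $\omega^j$, whereas the paper defines the dual parallel vector fields $\xi_j$ by the same formula, and in both cases the boundary Clifford identity from \cref{boundary_rigidity} shows these form an orthonormal (co)frame along $\partial M$, hence on all of $M$ by parallel transport, so $M$ is flat. This part is correct.

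The final step, however, has a genuine gap which you yourself flag. After establishing flatness, you want to integrate the closed forms $\omega^j$ on the universal cover and descend the resulting developing map to a global isometry $M\to\Omega$. This requires the period homomorphism $\gamma\mapsto\bigl(\oint_\gamma\omega^j\bigr)_j$ on $\pi_1(M)$ to vanish, i.e.\ the $\omega^j$ to be exact on $M$ itself, not merely closed. Being globally defined parallel (hence closed) $1$-forms gives trivial \emph{linear} holonomy, but says nothing directly about translational periods; the fact that $\omega^j|_{\partial M}=d(x^j\circ f)$ is exact on $\partial M$ does not force exactness on $M$ either, since $H_1(\partial M)=0$ for $n\ge 3$. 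You write that ``one must carefully rule out the possibility that $\pi_1(M)\neq 0$ introduces non-trivial periods'' and that strict convexity and compactness ``are crucial here'', but you do not actually do it, and this is precisely the non-trivial remaining content. The paper avoids this issue entirely: using $\II_{\partial M}=f^*\II_{\partial\Omega}$, it glues $\R^n\setminus\Omega$ to $M$ along the boundary, producing a complete flat manifold with a metric of $\Ct^2$ regularity, and then invokes the classification of complete flat manifolds (together with the structure of the Euclidean end) to conclude that $M$ is a domain in $\R^n$; the fundamental theorem of hypersurfaces then identifies $M$ with $\Omega$ up to a rigid motion. I would recommend adopting that gluing argument rather than attempting to repair the developing-map approach, which would require an independent argument controlling $\pi_1(M)$.
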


\begin{proof}
    We work in the same setup as the proof of \cref{almost_rigidity_theorem} above.
    That is, extend $f$ to a smooth map \(F\colon (M, \partial M) \to (\Omega, \partial \Omega)\) of the same degree such that \(\D{F}(\nu_M) = \nu_\Omega\).    
    Since by \cref{example:domain} \(\ind(\Dirac, 1) = \deg(F) = \deg(f) \neq 0\), we find a non-trivial harmonic spinor \(\Psi\) satisfying the boundary condition \(\clm(\nu_M) \Psi = \clms(\nu_\Omega) \Psi\).
    By \cref{boundary_rigidity}, $\Psi$ is parallel and it satisfies 
    \begin{equation}
        \clm(\xi) \Psi = \clms(\D{F} (\xi)) \Psi \quad \text{on \(\partial M\) for any \(\xi \in \Ct^\infty(\partial M, \T M)\),} \label{eq:clifford_action_total_rigidity}
    \end{equation}
    Compare \labelcref{eq:clifford_action} above; for the normal component \(\xi = \nu_M\) this is just the boundary condition.

    Let \(\bar{e}_1, \dotsc, \bar{e}_n\) denote a constant orthonormal basis of \(\R^n\).
    For each \(i = 1, \dotsc, n\) define a vector field \(\xi_i \in \Vfds(M)\) by \(\langle \xi_i, \blank \rangle = \langle \clms(\bar{e}_i) \Psi, \clm(\blank) \Psi \rangle\).
    Since \(\Psi\) and \(\bar{e}_i\) are parallel, the vector field \(\xi_i\) is also parallel.
    Using \labelcref{eq:clifford_action_total_rigidity} on \(\partial M\) we have
    \begin{align*}
        \langle \xi_i, \xi \rangle = \langle \clms(\bar{e}_i) \Psi, \clms(\D{F}(\xi)) \Psi \rangle &= -\langle \clms(\bar{e}_i) \clms(\D{F}(\xi)) \Psi, \Psi \rangle + 2 \langle \bar{e}_i, \D{F}(\xi) \rangle \lvert \Psi \rvert^2 \\
        &= -\langle \clms(\bar{e}_i) \clm(\xi) \Psi, \Psi \rangle + 2 \langle \bar{e}_i, \D{F}(\xi) \rangle \lvert \Psi \rvert^2\\
        &= -\langle \xi_i, \xi \rangle + 2 \langle \bar{e}_i, \D{F}(\xi) \rangle \lvert \Psi \rvert^2,
    \end{align*}
    where we have used the Clifford relation \(\clms(v)\clms(w) + \clms(w)\clms(v) = 2 \langle v, w \rangle\).
    Thus
    \begin{equation}
        \langle \xi_i, \xi \rangle = \langle \bar{e}_i, \D{F}(\xi) \rangle \lvert \Psi \rvert^2 \quad \text{on \(\partial M\) for any \(\xi \in \Ct^\infty(\partial M, \T M)\).} \label{eq:vf_boundary_relation}
    \end{equation}
    In particular, assuming the parallel spinor \(\Psi\) is normalized such that \(|\Psi| = 1\), then \labelcref{eq:vf_boundary_relation} implies that \(\D{F}(\xi_i) = \bar{e}_i\) on \(\partial M\).
    Since \(\D{F}\) is an isometry at the boundary, this means that \(\xi_1, \dotsc, \xi_n\) is an orthonormal frame of \(\T M\) along \(\partial M\).
    But the vector fields \(\xi_i\) are parallel, so they form a parallel orthonormal frame of \(\T M\) on all of \(M\). This means that the metric on \(M\) is flat.

    Using that \(\II_{\partial M} = f^\ast \II_{\partial\Omega}\), we glue $\R^n\setminus\Omega$ to $M$, obtaining an open manifold equipped with a complete flat $\Ct^2$-metric (see e.g.~\cite[Lemma~4.1]{shi-tam-manifolds-with-boundary}).
    By the classification theorem for flat manifolds, we deduce that $M$ is a domain in $\R^n$.
    Using again that \(\II_{\partial M} = f^\ast \II_{\partial \Omega}\), we deduce that $M$ and $\Omega$ are related via a Euclidean motion by the fundamental theorem of hypersurfaces.
\end{proof}

\begin{proof}[Proof of \cref{Thm:Llarull intro}]
    Assume by contradiction that for some \(p \in [1,\infty)\) and \(\varepsilon_0 > 0\), we have that for every \(\delta > 0\) there exists a map \(f \colon \partial M \to \partial \Omega\) as in the statement of the theorem but which is at least \(\varepsilon_0\)-far away in \(\SobolevW^{1,p}(\partial M, \R^n)\) from any isometry \(\phi \colon \partial M \to \partial \Omega\) satisfying \(\phi^\ast \II_{\partial \Omega} = \II_{\partial M}\).
    Letting \(\delta = \frac{1}{i}\), this gives a sequence \(f_i \colon \partial M \to \partial \Omega\).
    Since \(\deg(f_i)\neq 0\), for each \(i\) there exists a connected component \(\partial_i M \subseteq \partial M\) such that \(\deg(f_i|_{\partial_i M}) \neq 0\).
    As \(\partial M\) has finitely many connected components, after passing to a subsequence we may assume \(\partial_i M = \partial_0 M\) for all \(i\).
    Set \(m_\Omega \coloneqq \min_{\partial\Omega}\mean_{\partial\Omega} > 0\), where positivity follows from strict convexity of \(\partial\Omega\).
    Since \(\mean_{\partial M} \geq \mean_{\partial\Omega}\circ f_i - \frac{1}{i}\) on \(\partial M\), for all sufficiently large \(i\) we have
    \[
        \mean_{\partial M} \geq m_\Omega - \frac{1}{i} \geq \frac{m_\Omega}{2} > 0 \quad \text{on \(\partial M\),}
    \]
    and \(\mean_{\partial M}\) cannot vanish identically on \(\partial M\setminus\partial_0 M\) if this set is nonempty.
    Hence, after discarding finitely many terms, the restricted sequence \(f_i|_{\partial_0 M}\) satisfies all assumptions of \cref{almost_rigidity_theorem}.
    Applying \cref{almost_rigidity_theorem}, after passing to a subsequence, we obtain \(\partial_0 M = \partial M\), that is, \(\partial M\) is connected.
    Moreover, we obtain an isometry \(\phi \colon \partial M \to \partial \Omega\) with \(\phi^\ast \II_{\partial \Omega} = \II_{\partial M}\) such that \(f_i \to \phi\) in \(\SobolevW^{1,p}(\partial M,\R^n)\), contradicting the choice of \(f_i\).
    Hence the first part of the theorem is established.
    
    But if we have an isometry \(\phi \colon \partial M \to \partial \Omega\) on the boundary satisfying \(\phi^\ast \II_{\partial \Omega} = \II_{\partial M}\), then \(M\) and \(\Omega\) are also isometric by \cref{cor:flat}.
\end{proof}

\begin{proof}[Proof of \cref{Thm:Gromov rigidity}]
Equivalently, we show that, if $h\geq(n-1)/R$, then $M$ is isometric to the disc $\Disk_R^n$ of radius $R$.
Since $\Sigma$ has hyperspherical radius $R$, for every \(\delta > 0\) there exists a smooth map $f \colon\partial M\to\Sphere^n_R$ such that \(\operatorname{Lip}(f) \leq 1 + \delta\) and \(\deg(f) \neq 0\).
Since $M$ is a fill-in, \(\mean_{\partial M}\geq (n-1)/R=\mean_{\partial\Disk^n_R} \circ f\).
By \cref{Thm:Llarull intro}, $M$ and $\Disk^n_R$ are isometric in case $n\ge3$. The case $n=2$ follows from Gauß--Bonnet. 
\end{proof}

\begin{proof}[Proof of \cref{cor:Lipschitz}]
    For any \(\delta > 0\), we can \(\Ct^0\)-approximate \(f\) by a smooth map \(f_\delta \colon \partial M \to \partial \Omega\) such that \(\Lip(f_\delta) \leq 1 + \delta\).\footnote{Using mollification in small coordinate patches on \(\partial M\) we can obtain such an approximation as a map \(\tilde{f}_\delta \colon \partial M \to \R^n\). Projecting back to \(\partial \Omega\) along a small tubular neighborhood yields the desired approximation \(f_\delta \colon \partial M \to \partial \Omega\).}
    By uniform continuity of \(\mean_{\partial \Omega}\), we can further assume that \(\mean_{\partial M} \geq \mean_{\partial \Omega} \circ f_\delta - \delta\).
    Fix \(p > n\).
    Then \cref{Thm:Llarull intro} proves that \(M\) is isometric to \(\Omega\) and that \(f_\delta\) can be assumed to be arbitrarily \(\SobolevW^{1,p}\)-close to an isometry.
    Hence \(f_\delta\) is \(\Ct^0\)-close to an isometry.
    In sum, we can \(\Ct^0\)-approximate the original \(1\)-Lipschitz map \(f \colon \partial M \to \partial \Omega \) arbitrarily well by isometries \(\partial M \to \partial \Omega\) and so \(f\) itself must be an isometry because metric isometries are closed in \(\Ct^0\).
\end{proof}

\section{Applications to asymptotically flat manifolds}\label{S:PMT}

Witten~\cite{witten1981new} showed that, on an asymptotically flat spin manifold $M^n$ of non-negative scalar curvature, the following identity holds:
\begin{align*}
    m=\frac{1}{2(n-1)\omega_{n-1}}\int_{M}(4|\nabla \psi |^2+\scal|\psi|^2)\dV.
\end{align*}
Here, $\psi$ is a harmonic spinor asymptotic to a constant spinor with norm $1$ at $\infty$.
For the definitions of the mass $m$ and asymptotic flatness, we refer to \cite{lee2021geometric}.
One drawback of this formula is that it needs non-negative scalar curvature since otherwise the existence of $\psi$ is not guaranteed. 
Having such a mass formula without the $\scal \ge0$ requirement has been for instance exploited in \cite[Theorem 1.5]{hirsch2020mass}.
Interestingly, \cref{main_integral_inequality} implies such a Witten-type integral formula without imposing any conditions on the scalar curvature.

Given $k\ge1$ and $s>n-2$, we say that an asymptotically flat manifold $(M^n,g)$ is $\Ct^{k}_{-s}$-asymptotically Schwarzschild of mass $m$ if in the asymptotically flat coordinate system
\begin{align}\label{eq AS}
  \left(g-\left(1-\frac{2m}{r^{n-2}}\right)^{-1}\D r^2-r^2g_{\Sphere^{n-1}}\right)\in \Ct^{k}_{-s},
\end{align}
where we refer to \cite{lee2021geometric} for the definition of the weighted function spaces $\Ct^{k}_{-s}$.
Moreover, we denote by $M_r$ the portion of $M$ where, in the asymptotically flat coordinate system, $|x|\le r$. 

\begin{theorem}
    Let $\delta>0$ and let $(M^n,g)$ be a smooth spin manifold which is $\Ct^{1}_{-(n-2)-\delta}$-asymptotically Schwarzschild of mass $m$.
    Then for every radius $r\gg1$ there exists a harmonic spinor $\psi_r$ on $M_r$ normalized so that $\|\psi_r\|^2_{\Lp^2(\partial M_r)} = |\partial M_r|$ such that
    \begin{align*}
         m\ge\frac{1}{2(n-1)\omega_{n-1}}\int_{M_r}(4|\nabla \psi_r|^2+\scal|\psi_r|^2) \dV+\bigO(r^{-\delta}).
    \end{align*}
\end{theorem}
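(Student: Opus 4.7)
The plan is to apply \cref{main_integral_inequality} to the compact manifold \(M_r\) with target the Euclidean ball \(N = B_r \subseteq \R^n\) of radius \(r\) and boundary map \(f_r \colon \partial M_r \to \partial B_r\) coming from the asymptotically flat chart (extended arbitrarily to a smooth map \(M_r \to B_r\), which is only needed to invoke the index theorem). The crucial feature of this choice is that \(N\) is flat, so the interior curvature term \(\lvert \mathrm{R}^{\T N} \circ \D{f_r} \wedge \D{f_r}\rvert_{\tr}\) vanishes identically---this is exactly what dispenses with any pointwise sign hypothesis on \(\scal_M\). A non-trivial harmonic spinor \(\Psi_r\) on \(M_r\) satisfying the local boundary condition of \cref{example:domain} with \(s = 1\) then exists by the index formula \(\ind(\Dirac, 1) = \deg(f_r) = 1\), the boundary map being a diffeomorphism. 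Unique continuation for \(\Dirac\) ensures \(\Psi_r|_{\partial M_r} \not\equiv 0\), so after rescaling we obtain \(\psi_r\) with \(\|\psi_r\|_{\Lp^2(\partial M_r)}^2 = |\partial M_r|\).

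Inserting \(\psi_r\) into \cref{main_integral_inequality} with \(\Dirac \psi_r = 0\), \(\mathrm{R}^{\T N} = 0\), and discarding the non-negative \(\sigma_{\min}\) term, one obtains
\begin{equation*}
    \tfrac{1}{2}\int_{\partial M_r}\bigl(|\Shape_{\partial B_r} \circ \D{f_r}|_{\tr} - \mean_{\partial M_r}\bigr)|\psi_r|^2 \dS \;\geq\; \|\nabla\psi_r\|_{\Lp^2(M_r)}^2 + \tfrac{1}{4}\int_{M_r}\scal_M\,|\psi_r|^2\dV.
\end{equation*}
The key step is then the pointwise asymptotic expansion of the boundary integrand. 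Since \(\Shape_{\partial B_r} = \tfrac{1}{r}\mathrm{id}\) and the induced metric on \(\partial M_r\) agrees with \(r^2 g_{\Sphere^{n-1}}\) to relative order \(\bigO(r^{-(n-2)-\delta})\), the singular values of \(\D{f_r}\) differ from \(1\) by the same order, giving \(|\Shape_{\partial B_r}\circ\D{f_r}|_{\tr} = \tfrac{n-1}{r} + \bigO(r^{-(n-1)-\delta})\). From the Schwarzschild form of \(g\) combined with the \(\Ct^1_{-(n-2)-\delta}\) control on its first derivatives, \(\mean_{\partial M_r} = \tfrac{n-1}{r}\sqrt{1-2m/r^{n-2}} + \bigO(r^{-(n-1)-\delta}) = \tfrac{n-1}{r} - \tfrac{(n-1)m}{r^{n-1}} + \bigO(r^{-(n-1)-\delta})\). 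Hence the boundary integrand equals \(\tfrac{(n-1)m}{r^{n-1}} + \bigO(r^{-(n-1)-\delta})\); pairing with \(\|\psi_r\|_{\Lp^2(\partial M_r)}^2 = |\partial M_r| = r^{n-1}\omega_{n-1}(1 + \bigO(r^{-(n-2)-\delta}))\) yields a total boundary contribution of \(\tfrac{(n-1)\omega_{n-1} m}{2} + \bigO(r^{-\delta})\). Multiplying the resulting inequality by \(\tfrac{2}{(n-1)\omega_{n-1}}\) produces the claim.

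The main obstacle I anticipate is the careful bookkeeping of these error terms uniformly in \(r\). The \(\Ct^1_{-(n-2)-\delta}\) decay on the metric perturbation gives exactly \(\bigO(r^{-(n-1)-\delta})\) control on its first derivatives, which is the precise threshold at which the leading \((n-1)m/r^{n-1}\) term survives the \(r^{n-1}\) volume factor with an \(\bigO(r^{-\delta})\) remainder. A convenient feature that simplifies the argument is that the boundary integrand is approximately constant on \(\partial M_r\) to leading order, so it factors out of the integral against \(|\psi_r|^2\) without requiring any pointwise control on \(\psi_r\) beyond the \(\Lp^2\)-normalization at the boundary.
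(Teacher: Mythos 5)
Your proof is correct and takes essentially the same route as the paper: apply the fundamental spinorial integral inequality on \(M_r\) with the flat Euclidean ball \(\Ball_r\) as target (so the interior curvature term vanishes), produce the harmonic spinor via the index formula of \cref{example:domain}, and extract the mass from the asymptotic expansions of the mean curvature and induced metric of the coordinate spheres. The only cosmetic difference is that you estimate \(\lvert\Shape_{\partial \Ball_r}\circ\D f_r\rvert_{\tr}\) directly from the singular values of \(\D f_r\), whereas the paper bounds it by \(L_r\,(n-1)/r\) via the matrix Hölder inequality; both give the same \(\bigO(r^{-(n-1)-\delta})\) error.
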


Since manifolds which are asymptotically Schwarzschild are dense within arbitrary asymptotically flat manifolds, the above theorem implies the Riemannian positive mass theorem \cite[Theorem 7]{Bray1997Thesis}.
We also remark that the spinors in our proof live in a different bundle compared to the ones in Witten's argument.

\begin{proof}
Let $r\gg1$ and consider the coordinate sphere $\partial M_r$ in the asymptotically flat end. 
The mean curvature of a coordinate sphere of Schwarzschild in static coordinates $(1-\tfrac{2m}{r^{n-2}})^{-1}\D r^2+r^2g_{\Sphere^{n-1}}$ equals $\tfrac{n-1}r\sqrt{1-\tfrac{2m}{r^{n-2}}}=\tfrac{n-1}r(1-\frac{m}{r^{n-2}}+\mathcal O(r^{-(n-1)}))$.
Since $(M^n,g)$ is asymptotically Schwarzschild, we obtain using \eqref{eq AS} 
\begin{align}\label{eq:mean curvature}
    \mean_r=\frac{n-1}r\left(1-\frac{m}{r^{n-2}}+\mathcal O(r^{-(n-2)-\delta})      \right)
\end{align}
for the mean curvature $\mean_r$ of $\partial M_r$.
Similarly, the induced metric $g_r$ on $\partial M_r$ satisfies
\begin{align}\label{coordinate sphere metric}
 g_r=g_{\Sphere^{n-1}_r}+\mathcal O(r^{-(n-2)-\delta}),
\end{align}
where $g_{\Sphere^{n-1}_r}=r^2g_{\Sphere^{n-1}}$ is the round metric on the sphere of radius $r$.
The latter condition implies that the map $f_r=\id:\partial M_r\to S_r$ is Lipschitz with Lipschitz constant $L_r$ satisfying 
\begin{align}\label{Lipschitz constant2}
    L_r=1+\mathcal \bigO(r^{-(n-2)-\delta}).
\end{align}
Now, as described in Section \ref{sec:almost rigidity}, we can extend $f_r$ to a map $F_r$ from $M_r$ to the ball $\Ball_r$ of radius $r$, and solve on the corresponding twisted spin bundle \(S\) the Dirac equation $\Dirac \psi_r=0$ with boundary condition \(\clm(\nu_{M_r}) \psi_r =  \clms(\nu_{\Ball_r}) \psi_r \).
 Combining equations \eqref{eq:mean curvature}, \eqref{coordinate sphere metric} and \eqref{Lipschitz constant2} with \cref{main_integral_inequality}, which in this situation states
    \begin{align*}
        \int_{M_r}(4|\nabla \psi_r|^2+\scal|\psi_r|^2)\dV\le2\int_{\partial M_r}\left(L_r\frac{n-1}r-\mean_r\right)|\psi_r|^2\dS,
    \end{align*}
    the result follows.
\end{proof}

We expect that this approach will also work in the initial data set setting, and we refer to \cite{brendle2023rigidity} for the corresponding Schr\"odinger--Lichnerowicz formula.
Also, compare this to \cite{bray2022harmonic} and \cite{hirsch2022spacetime}, where integral formulas for the mass are obtained without assuming non-negative scalar curvature or the dominant energy condition.

\appendix

\section{The index theorem}\label{appendix_index_theorem}
In this appendix, we exhibit a quick cut-and-paste argument to derive the index formula for the boundary value problem studied in \cref{subsec:spin_maps} for the case of maps to Euclidean domains in any dimension parity.
Suppose from now on that we are in the setting of \cref{subsec:spin_maps}.
Given a vector field \(\eta \in \Vfds(N)\), we consider the following perturbed Dirac operator
\[
    \Dirac_\eta = \Dirac + \clms(\eta).
\]
Expanding the term \(|\Dirac_\eta \psi|^2\) and integrating by parts the term $\langle \clms(\eta)\psi, \Dirac \psi\rangle$, we obtain the following formula
\begin{align}
    \|\Dirac_\eta \psi\|_{\Lp^2(M)}^2 = \|\Dirac \psi \|_{\Lp^2(M)}^2 &+ \int_M \left( \langle (\clms(\eta) \Dirac + \Dirac \clms(\eta))\psi, \psi \rangle + \lvert \eta \rvert^2 \lvert \psi \rvert^2 \right) \dV \label{eq:callias_integral_formula} \\
    &+ \int_{\partial M} \langle \clm(\nu_M) \clms(\eta) \psi, \psi \rangle \dS \nonumber.
\end{align}
Note that \(\clms(\eta) \Dirac + \Dirac \clms(\eta) = \sum_{i=1}^n \clm(e_i) \clms(\nabla_{\D{f}(e_i)} \eta)\) by a similar argument as in the proof of \cref{boundary_dirac_commutation}.
Moreover, if \(\psi\) satisfies the boundary condition \(\clm(\nu_M) \psi = s \clms(\nu_N) \psi\), then 
\begin{align}
\begin{split}
    \langle \clm(\nu_M) \clms(\eta) \psi, \psi \rangle=&
    - \langle \clms(\eta)  \clm(\nu_M) \psi, \psi \rangle\\
    =& - s \langle \clms(\eta) \clms(\nu_N) \psi, \psi \rangle\\
    =& s \langle \clms(\nu_N) \clms(\eta) \psi, \psi \rangle - 2 s \langle \eta, \nu_N \rangle \lvert \psi \rvert^2 \\
    =&  - \langle \clm(\nu_M) \clms(\eta) \psi, \psi \rangle + 2 s \langle \eta, -\nu_N \rangle \lvert \psi \rvert^2,
    \end{split}
    \end{align}
where we have used the Clifford relation \(\clms(v)\clms(w) + \clms(w)\clms(v) = 2 \langle v, w \rangle\).
Thus we conclude that \(\langle \clm(\nu_M) \clms(\eta) \psi, \psi \rangle = s \langle \eta, -\nu_N \rangle |\psi|^2\).

We obtain the following lemma:

\begin{lemma}\label{vectorfield_vanishing_lemma}
    Let \(s \colon \partial N \to \{\pm 1\}\) and suppose there exists a nowhere-vanishing vector field \(\eta \in \Vfds(N)\) such that \(s \langle \eta, -\nu_N \rangle \geq 0\) on \(\partial N\).
    Then the operator \(\Dirac\) subject to the boundary condition \(\clm(\nu_M) \psi = s \clms(\nu_N) \psi \) has vanishing index.
\end{lemma}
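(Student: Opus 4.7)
The plan is to deform the operator $\Dirac$ along the one-parameter family $\Dirac_{t\eta} = \Dirac + t\clms(\eta)$, $t \geq 0$, and use the integral formula \labelcref{eq:callias_integral_formula} (with $\eta$ replaced by $t\eta$) to show that the kernel vanishes for large $t$. Homotopy invariance of the index then yields $\ind(\Dirac, s) = 0$.

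First I would set up the family. The added term $\clms(\eta)$ is a bounded, self-adjoint bundle endomorphism of $S$, so $\Dirac_{t\eta}$ has the same principal symbol as $\Dirac$. In particular, the local boundary condition $\clm(\nu_M)\psi = s \clms(\nu_N)\psi$ remains an elliptic self-adjoint boundary condition for $\Dirac_{t\eta}$ for every $t \geq 0$. Moreover, since $\clms(v) = v \wedge + \ins_v$ shifts the exterior degree by one, $\clms(\eta)$ is odd for the $\Z/2$-grading on $S = \bigwedge \R^n \otimes \C$. Therefore $\Dirac_{t\eta}$ is odd self-adjoint Fredholm under the given boundary condition, and its index $\ind(\Dirac_{t\eta}, s)$ is defined and is independent of $t$ by homotopy invariance of the graded Fredholm index.

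Second, I would apply \labelcref{eq:callias_integral_formula} with the rescaled field $t\eta$. Using the computation preceding the lemma, the boundary integrand equals $t\, s\langle \eta, -\nu_N\rangle \lvert\psi\rvert^2$, which is non-negative by hypothesis. The interior quadratic term becomes $t^2 |\eta|^2|\psi|^2$, and since $\eta$ is nowhere-vanishing on $N$ and $f(M) \subset N$ is compact, one has $|\eta|^2 \geq c_0 > 0$ on $f(M)$. Finally, by the identity
\[
\clms(t\eta)\Dirac + \Dirac\clms(t\eta) = t\sum_{i=1}^n \clm(e_i)\clms(\nabla_{\D f(e_i)}\eta),
\]
the interior cross-term is pointwise bounded by $Ct |\psi|^2$ for some constant $C$ depending only on $|\D f|$ and $|\nabla \eta|$ on $f(M)$. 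Dropping the non-negative terms $\|\Dirac\psi\|_{\Lp^2(M)}^2$ and the boundary integral, we obtain for every $\psi$ in the domain of $\Dirac_{t\eta}$
\[
\|\Dirac_{t\eta}\psi\|_{\Lp^2(M)}^2 \geq \left(t^2 c_0 - Ct\right)\|\psi\|_{\Lp^2(M)}^2.
\]
Hence for $t$ sufficiently large, $\ker(\Dirac_{t\eta}) = 0$. Self-adjointness of the boundary problem forces the cokernel to vanish as well, so $\ind(\Dirac_{t\eta}, s) = 0$ for large $t$, and by homotopy invariance $\ind(\Dirac, s) = 0$.

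The main technical point to be careful about is the invariance of the (graded) Fredholm index along the family $\{\Dirac_{t\eta}\}_{t \geq 0}$. This is not really difficult, since the family consists of self-adjoint elliptic operators with a \emph{fixed} local elliptic boundary condition and differs only by a zeroth-order, norm-continuous, odd perturbation; nonetheless, one should state it explicitly, either by invoking stability of the index under such compact perturbations or by a spectral-flow argument for graded self-adjoint Fredholm families.
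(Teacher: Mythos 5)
Your proof is correct and follows essentially the same route as the paper's: deform along $\Dirac_{t\eta}$, invoke homotopy invariance of the index, and use \labelcref{eq:callias_integral_formula} together with $|\eta|^2 \geq c_0 > 0$ and the sign condition on the boundary term to show $\Dirac_{t\eta}$ is invertible for $t$ large. The paper's proof is just a two-sentence compressed version of what you spell out.
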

\begin{proof}
    We have \(\ind(\Dirac, {s}) = \ind(\Dirac_{\lambda \eta}, s)\) for any \(\lambda \geq 0\) by homotopy invariance of the index.
    But \labelcref{eq:callias_integral_formula} implies that under the boundary condition given by \(s\) the operator \(\Dirac_{\lambda \eta}\) is invertible for \(\lambda\) sufficiently large and hence has vanishing index.
\end{proof}

\begin{lemma}\label{decomposition_theorem}
    Suppose that we have a decomposition \(N = N_0 \cup_{\Upsilon} N_1\) and \(M = M_0 \cup_{\Sigma} M_1\) respecting all structures, where \(M_i\) and \(N_i\) are codimension zero submanifolds with boundary and the gluing happens along the additional boundary components not present in \(M\) and \(N\), denoted by \(\Upsilon \subseteq \partial N_i\) and \(\Sigma \subseteq \partial M_i\).
    We also assume that the map \(f \colon M \to N\) restricts to \(f_i \colon (M_i, \partial M_i) \to (N_i, \partial N_i)\).
    Let \(s \colon \partial N \to \{\pm 1\}\) and extend this to \(s_i \colon \partial N_i \to \{\pm 1\}\) such that \(s_0|_{\Upsilon} = - s_{1}|_{\Upsilon}\).
    Then
    \[
        \ind(\Dirac, s) = \ind(\Dirac_{0},s_0) + \ind(\Dirac_{1},s_1). 
    \]
\end{lemma}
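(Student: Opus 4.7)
The approach is a neck-stretching argument that reduces the problem to a Mayer--Vietoris-type parametrix gluing, exploiting the fact that the two boundary conditions imposed at \(\Sigma\) from the \(M_0\)- and \(M_1\)-sides are complementary.

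First, I would use homotopy invariance of the Fredholm index to deform the Riemannian metric on \(M\) so that a neighborhood of \(\Sigma\) becomes isometric to a cylinder \([-R, R] \times \Sigma\) with the product metric, and simultaneously attach half-cylinders \([0, R] \times \Sigma\) and \([-R, 0] \times \Sigma\) to \(M_0\) and \(M_1\), respectively. The map \(f\) and the spin structure on \(f\) extend as products along these cylinders (using \(f_i|_\Sigma\)), and the analogous stretching is performed in \(N\) around \(\Upsilon\). None of the indices \(\ind(\Dirac, s)\), \(\ind(\Dirac_0, s_0)\), \(\ind(\Dirac_1, s_1)\) change under this deformation.

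The next key point is the algebraic compatibility of the boundary conditions at \(\Sigma\). The grading involution \(\chi = \clms(\nu_N)\clm(\nu_M)\) on \(S^\partial|_\Sigma\) is the same whether \(\Sigma\) is viewed as a boundary component of \(M_0\) or \(M_1\): the two normals each flip sign, and these sign changes cancel inside \(\chi\). A computation using the Clifford relations (as in \cref{boundary_dirac_commutation}) shows that \(\chi\) anticommutes with the boundary Dirac operator \(A\) on \(\Sigma\), giving a \(\Z/2\)-grading that makes \(A\) odd. Since \(s_0|_\Upsilon = -s_1|_\Upsilon\), the two local boundary conditions \(\chi\psi = s_0 \psi\) and \(\chi\psi = s_1 \psi\) at \(\Sigma\) pick out complementary \(\pm\)-eigenspaces of \(\chi\), and the corresponding \(\Lp^2\)-orthogonal projections sum to the identity on \(\Lp^2(\Sigma, S^\partial)\).

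Finally, I would run the parametrix gluing. Choose a smooth partition \(\phi_0^2 + \phi_1^2 = 1\) on the stretched \(M\) with \(\phi_i\) supported in \(M_i\) together with the adjacent half of the cylinder. Given Fredholm parametrices \(Q_i\) for \((\Dirac_i, s_i)\) on the cylindrically extended \(M_i\), assemble \(Q = \phi_0 Q_0 \phi_0 + \phi_1 Q_1 \phi_1\); this gives a parametrix for \((\Dirac, s)\) on \(M\) up to commutator terms \([\Dirac, \phi_i] = \clm(\nabla \phi_i)\) supported in the cylindrical neck. On the cylinder the operator takes the product form \(\clm(\partial_t)(\partial_t + A)\), and the non-zero spectrum of \(A\) contributes exponentially decaying modes whose error contributions vanish as \(R \to \infty\). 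The main obstacle is the bookkeeping of the zero modes of \(A\), which propagate as constants in the \(t\)-direction and so do not decay; however, the \(\chi\)-splitting \(\ker A = (\ker A)^+ \oplus (\ker A)^-\) divides them evenly, and because the two local boundary conditions absorb precisely the complementary halves, their contributions match up exactly on the two sides of the identity. Together with uniform estimates for the parametrix, this yields \(\ind(\Dirac, s) = \ind(\Dirac_0, s_0) + \ind(\Dirac_1, s_1)\).
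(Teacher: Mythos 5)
The paper's own proof is a one-line reduction: it invokes the abstract splitting theorem for elliptic boundary value problems, Theorem~6.5 of B\"ar--Ballmann's \emph{Guide to elliptic boundary value problems for Dirac-type operators}, which applies directly because the two local boundary conditions $\chi\psi = s_0\psi$ and $\chi\psi = s_1\psi = -s_0\psi$ along $\Sigma$ are adjoint/complementary in the sense required there. Your argument instead proves a splitting theorem from scratch via neck-stretching and parametrix gluing --- a genuinely different and substantially longer route.

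Your structural observations are correct: in the stretched product neck the error term in \cref{boundary_dirac_commutation} vanishes so $\chi$ anticommutes with the boundary operator $A$; the involution $\clm(\nu_M)$ anticommutes with both $A$ and $\chi$, so it swaps the $\pm 1$-eigenspaces of $\chi$ inside $\ker A$ and they have equal dimension; and the two conditions $\chi\psi = \pm\psi$ at $\Sigma$ do project onto complementary subspaces. However, the decisive step --- that the parametrix $Q = \phi_0 Q_0\phi_0 + \phi_1 Q_1\phi_1$ yields, in the limit $R\to\infty$, \emph{exact} index additivity with no correction term --- is asserted rather than proven. In neck-stretching arguments the low modes of $A$ are precisely where correction terms (small-eigenvalue or ``extended solution'' contributions) typically appear, and the claim that the zero modes' ``contributions match up exactly on the two sides of the identity'' does not by itself explain what mechanism cancels such corrections. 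It is true that the complementarity $s_0|_\Sigma = -s_1|_\Sigma$ is exactly what makes this work out, but turning that into a proof requires either the abstract B\"ar--Ballmann machinery (where no stretching is needed at all) or a careful a priori estimate matching the half-infinite-cylinder $\Lp^2$-solutions across the neck. As written, the sketch reads plausibly but leaves the hard analytic content unfinished; the paper's citation is both more rigorous and much more economical.
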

\begin{proof}
    This is a consequence of the splitting theorem of \textcite[Theorem~6.5]{Baer-Ballmann:Guide-Boundary-value-problems-Dirac}, compare also \cite[Theorem~3.6]{scalar_mean_dirac}.
\end{proof}

\begin{theorem}\label{the_index_theorem}
    Let \(N = \Omega\) be a convex domain in \(\R^n\) and \(s = 1\). Then
    \[
        \ind(\Dirac, 1) = \deg(f)  
    \]
\end{theorem}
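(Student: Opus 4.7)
The plan is to combine the two lemmas provided in this appendix (\cref{decomposition_theorem,vectorfield_vanishing_lemma}) with a transversality argument to reduce to a computation on a disjoint union of balls. Homotopy invariance of the Fredholm index under deformations of the boundary condition will let us freely modify $f$ without changing the left-hand side, while the right-hand side $\deg(f)$ depends only on the homotopy class of $f\colon (M,\partial M)\to(\Omega,\partial\Omega)$ too.

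First, I would use transversality to perturb $f$ so that some fixed interior point $p_0\in\Omega^\circ$ is a regular value. Then $f^{-1}(p_0) = \{x_1,\dotsc,x_k\}$ is finite, each $x_i$ carries a local sign $\varepsilon_i = \mathrm{sign}(\det \D_{x_i} f) \in \{\pm 1\}$, and by the classical definition of degree, $\deg(f) = \sum_i \varepsilon_i$. Fix a small ball $B = B_r(p_0) \subset \Omega$ and set $B_i \subset M$ to be the connected component of $f^{-1}(B)$ containing $x_i$. For $r$ small, $f|_{B_i}\colon B_i \to B$ is a diffeomorphism, and we obtain decompositions $\Omega = B \cup_{\Upsilon} \Omega'$ (with $\Upsilon = \partial B$) and $M = (\bigsqcup_i B_i)\cup_{\Sigma} M'$ (with $\Sigma = \bigsqcup_i \partial B_i$) on which $f$ respects the splitting.

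I would then apply \cref{decomposition_theorem} with the sign choice $s_1 = +1$ on $\partial\Omega$ and $s_1 = -1$ on $\Upsilon$ (forcing $s_0 = +1$ on $\Upsilon$, which agrees with the original sign on $\partial M$ on all components), obtaining
\[
  \ind(\Dirac,1) \;=\; \ind(\Dirac_0, s_0) + \ind(\Dirac_1, s_1).
\]
To kill the second term, I would apply \cref{vectorfield_vanishing_lemma} on $\Omega'$ with the nowhere-vanishing radial vector field $\eta(x) = x - p_0$: on $\partial \Omega$, convexity of $\Omega$ gives $\langle \eta, -\nu_\Omega\rangle \geq 0$, matching $s_1 = +1$; on $\Upsilon$, $\eta$ points out of $B$ hence along $\nu_{\Omega'}$, so $\langle \eta, -\nu_{\Omega'}\rangle < 0$, matching $s_1 = -1$. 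Thus $\ind(\Dirac_1,s_1) = 0$, and by additivity over connected components,
\[
  \ind(\Dirac,1) \;=\; \sum_{i=1}^{k}\ind\bigl(\Dirac_{B_i},+1\bigr).
\]

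It remains to show that each local contribution equals $\varepsilon_i$. By homotopy invariance, $f|_{B_i}$ can be deformed (through diffeomorphisms $B_i \to B$) to the identity when $\varepsilon_i = +1$ and to a coordinate reflection when $\varepsilon_i = -1$; both deformations keep $f$ a spin map with a well-defined boundary condition. In the orientation-preserving model, the setup of \cref{example:domain} agrees on the ball with that of \cref{example:identity}, the boundary condition becomes the absolute boundary condition $\iota_\nu\alpha = 0$ for differential forms, and $\ker(\Dirac,+1)$ is the de Rham cohomology, giving $\ind = \chi(B) = 1$. In the orientation-reversing model, the degree of the induced boundary map $\partial f\colon \Sphere^{n-1}\to\Sphere^{n-1}$ is $-1$; comparing with the identity boundary map shows this is equivalent to the $s=-1$ boundary condition in the identity-case picture, for which $\ker$ is the relative cohomology $H^\ast(B,\partial B;\C)$ and hence $\ind = \chi(B,\partial B) = -1$. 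Summing, $\ind(\Dirac,1) = \sum_i \varepsilon_i = \deg(f)$.

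The main obstacle is the sign bookkeeping in the final step: one must carefully check that the reduction from an orientation-reversing diffeomorphism $B_i\to B$ to the $s=-1$ boundary condition with $f = \id$ is consistent with the $\Z/2$-grading on $\bigwedge\R^n$. This is where dimension-parity temporarily enters (since the degree of the antipodal map is $(-1)^n$) but cancels out against the parity of the top-degree relative class generating $H^n(B,\partial B)$, yielding the uniform answer $-1$ in either parity. All other steps are standard cut-and-paste and homotopy invariance of the index.
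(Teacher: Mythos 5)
Your overall strategy is the same as the paper's: pick a regular value, split $\Omega$ into a small ball $B$ around it and its complement $\Omega'$, use the radial vector field together with \cref{vectorfield_vanishing_lemma} to kill the index on $\Omega'$, and then use \cref{decomposition_theorem} to reduce to a sum of local indices over the preimage balls $B_i$, each mapping diffeomorphically to $B$. The decomposition, sign choices, and vector-field argument all match. The difference is in the final step, and it is the one place you yourself flagged as the main obstacle — and there your reasoning does not work.

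For an orientation-reversing diffeomorphism $B_i \to B$, the induced identification $\T B_i \oplus f^\ast \T B \cong \T B_i \oplus \T B_i$ is the identity on the first summand and $\D f^{-1}$ on the second, hence has determinant $-1$: it lies in $\mathrm{O}_n \times \mathrm{O}_n$ but \emph{not} in $\mathrm{S}(\mathrm{O}_n\times\mathrm{O}_n)$, so the lift to spinor bundles reverses the $\Z/2$-grading. Crucially, the boundary operator $\chi = \clms(\nu_N)\clm(\nu_M)$ is conjugated to $\clms(\nu_M)\clm(\nu_M)$, so the boundary condition is carried to the \emph{same} $s=+1$ (absolute) condition for the de Rham complex, not to $s=-1$. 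Thus $\ker(\Dirac,+1)\cong H^\ast(B;\C)=\C$ concentrated in degree $0$, and the grading reversal gives $\ind = -\chi(B) = -1$, uniformly in $n$. Your route through the $s=-1$ condition and $H^\ast(B,\partial B;\C)$ computes $\chi(B,\partial B) = (-1)^n$, not $-1$; the "parity cancellation" you invoke does not exist — a reflection on $\Sphere^{n-1}$ has degree $-1$, not $(-1)^n$ (that is the antipodal map), so there is no matching sign to cancel against the top-degree relative class. Concretely, your argument as written would return $(-1)^n$ for the local contribution in the orientation-reversing case, which fails when $n$ is even. Replacing that step with the grading-flip argument above repairs the proof; the rest of your write-up is correct and follows the paper's cut-and-paste scheme.
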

\begin{proof}
    We proceed in several steps of successive generality.
    
\begin{enumerate*}
    \item \label{case_identity} The case where \(M = N\) is a convex domain in \(\R^n\) with identical metrics and \(f = \id\) reduces to \cref{example:identity} because \(\chi(N) = 1\).
\end{enumerate*}

\begin{enumerate*}[resume]
    \item \label{case_diffeomorphism} Now assume that \(f \colon M \to N\) is a diffeomorphism, where \(N\) is a convex domain in \(\R^n\).
    Then by homotopy invariance of the index we may assume that \(f\) is an isometry.
    Thus we obtain a bundle isomorphism \(\T{M} \oplus f^\ast \T{N} \cong \T{M} \oplus \T{M}\) which preserves the pseudo-Riemannian metric and connection.
    If \(f\) is also orientation preserving, this is an orientation preserving bundle isomorphism and hence lifts to the associated spinor bundles, in which case we are immediately reduced to \labelcref{case_identity} because then \(\ind(\Dirac, 1) = 1 = \deg{f}\).
    Otherwise, if \(f\) is orientation reversing, then the induced isomorphism \(\T{M} \oplus f^\ast \T{N} \cong \T{M} \oplus \T{M}\) is also orientation reversing and the effect on the spinor bundle is a reversal of the \(\Z/2\)-grading.
    Hence \(\ind(\Dirac, 1) = -1 = \deg(f)\).
\end{enumerate*}

\begin{enumerate*}[resume]
    \item \label{case_local_diffeomorphism} Next we consider a map \(f \colon M \to N\) which on each connected component of \(M\) restricts to a diffeomorphism onto the convex domain \(N\).
    Then the desired index formula follows directly from \labelcref{case_diffeomorphism} by additivity over connected components.
\end{enumerate*}

\begin{enumerate*}[resume]
    \item Now we are ready to treat the general case.
    To this end, let \(y_0 \in N\) be an interior point of \(N\) which is a regular value of \(f\). 
    Choose a smooth vector field \(\eta\) on \(N\) which satisfies \(\eta|_{\partial N} = -\nu_N\) and which is equal to the vector field \(y \mapsto y - y_0\) in a disk neighborhood \(N_0 \subset \interior{N}\) around \(y_0\) and such that \(\eta\) has no zeroes other than \(y_0\).
    By possibly shrinking the disk \(N_0\) further, we can also assume that \(f \colon f^{-1}(N_0) \to N_0\) is a diffeomorphism on each connected component of \(M_0 \coloneqq f^{-1}(N_0)\) and \(M_0\) has a smooth boundary \(\Sigma \coloneqq \partial M_0\).
    Let \(N_1\) denote the closure of the complement of \(N_0\) and \(M_1 \coloneqq f^{-1}(N_1)\) as well as \(\Upsilon \coloneqq \partial N_0\).
    Let \(s_i \colon \partial N_i \to \{\pm 1\}\) be defined by setting \(s_0(\Upsilon) = 1\), \(s_1(\Upsilon) = -1\) and \(s_1(\partial N) = 1\).
    Then by construction \(\eta\) has no zeroes on \(N_1\) and we have \(s_1 \langle \eta, -\nu_{N_1} \rangle > 0\) on \(\partial N_1 = \Upsilon \sqcup \partial N\), where \(\nu_{N_1}\) denotes the unit normal of \(\partial N_1\) pointing inside \(N_1\).
    Thus \cref{vectorfield_vanishing_lemma} implies that \(\ind(\Dirac_{1, s_1}) = 0\).
    Moreover, \cref{decomposition_theorem} implies that \(\ind(\Dirac_{s}) = \ind(\Dirac_{0,s_0}) + \ind(\Dirac_{1,s_1}) = \ind(\Dirac_{0,s_0})\).
    So it suffices to show that \(\ind(\Dirac_{0,s_0}) = \deg(f)\), but we have treated this in \labelcref{case_local_diffeomorphism}.
\end{enumerate*}
\end{proof}

\begin{remark}
    By elaborating on this argument further and using the Poincaré--Hopf index formula~\cite{Hopf:Vektorfelder} for vector fields, it is possible to show the general formula \(\ind(\Dirac, 1) = \deg(f) \cdot \chi(N)\) in this way, where \(N\) is not necessarily a convex domain in \(\R^n\).
    See \textcite{tony2024scalar} where this idea is carried out in a different setting.
\end{remark}

\section{A quantitative matrix Hölder inequality}
\label{appendix:matrix_norms}
Let \(W, V\) be real Euclidean vector spaces and assume for simplicity that \(\dim W = \dim V = n < \infty\).
Let \(T \colon W \to V\) be a linear map.
We can always find a polar decomposition \(T = P \circ U\), where \(U \colon W \to V\) is an isometry and \(P \colon V \to V\) is a non-negative self-adjoint operator.
The operator \(P\) is always uniquely determined by \(T\) and given by the formula \(P = \sqrt{T \circ T^\ast}\), whereas the isometry \(U\) is only unique if \(T\) is invertible.
The eigenvalues \(\sigma_1, \dotsc, \sigma_n \geq 0\) of \(P\) are called the \emph{singular values} of the operator \(T\).
The \emph{trace norm} of \(T\) is defined as \(|T |_{\tr} = \tr(P)\), or in other words as the sum of the singular values. 
The \emph{operator norm} of \(T\), denoted by \(|T|_\op\), is the maximum of its singular values.
More generally, we may also define the \emph{Schatten norms} by \(|T|_p = \tr(P^p)^{1/p}\) for any $1 \leq p < \infty$.
For \(p = 2\), this is known as the \emph{Hilbert--Schmidt norm}.
Given linear maps \(T \colon W \to V\) and \(S \colon V \to V\), we have the following \enquote{Hölder inequality}
\begin{equation}\label{eq:trace_hoelder}
    |S \circ T|_{1} \leq |S|_{p}\ |T|_{q},
\end{equation}
whenever \(1/p + 1/q = 1\).

We will only use the case \(p=1, q = \infty\) and when \(S\) is a non-negative endomorphism. 
In this case we have the following quantitative estimate which we will need in our (almost) rigidity argument:

\begin{lemma}\label{trace_inequality_polar_decomposition}
    Let \(V,W\) be finite dimensional real Euclidean vector spaces of the same dimension, \(T \colon W \to V\) a linear map and \(S \colon V \to V\) a self-adjoint non-negative endomorphism.
    Let \(\sigma_{\min} = \sigma_{\min}(S) \geq 0\) denote the smallest eigenvalue of \(S\).
    Furthermore, let \(P \circ U\) be a polar decomposition of \(S \circ T\), that is, \(P \circ U = S \circ T\) with \(U \colon W \to V\) an isometry and \(P \colon V \to V\) a self-adjoint non-negative endomorphism.
    Then for any \(\lambda \geq |T|_{\op}\), we have
    \[
    \tfrac{\sigma_{\min}}{2} \lvert T - \lambda U \rvert_2^2 \leq \lvert S \rvert_{\tr}\ \lambda^2 - \lvert S \circ T \rvert_{\tr} \lambda.
    \]
    In particular, if \(T \neq 0\),
    \[
    \tfrac{\sigma_{\min}}{2 |T|_{\op}} \lvert T - |T|_{\op} U \rvert_2^2 \leq \lvert S \rvert_{\tr}\ \lvert T \rvert_{\op} - \lvert S \circ T \rvert_{\tr}.
    \]
\end{lemma}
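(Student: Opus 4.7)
The plan is to prove the first inequality by direct computation and then deduce the second one by setting $\lambda = |T|_{\op}$ and dividing by $|T|_{\op}$. First, since $W$ and $V$ have the same dimension $n$, the isometry $U\colon W \to V$ is an orthogonal isomorphism, so $U U^* = I_V$. This gives two key identities: expanding the Hilbert--Schmidt norm,
\[
\lvert T - \lambda U \rvert_2^2 = \lvert T \rvert_2^2 - 2\lambda \tr(TU^*) + n\lambda^2,
\]
and from $P \circ U = S \circ T$ multiplied on the right by $U^*$,
\[
\lvert S \circ T \rvert_{\tr} = \tr(P) = \tr(S \circ T \circ U^*).
\]

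The main trick is to split $S = \sigma_{\min} I_V + S'$ with $S' \coloneqq S - \sigma_{\min} I_V \geq 0$. Inserting the two identities above into the target expression $\lambda^2 \lvert S \rvert_{\tr} - \lambda \lvert S \circ T \rvert_{\tr} - \tfrac{\sigma_{\min}}{2}\lvert T - \lambda U \rvert_2^2$ and collecting terms, the contributions involving $\sigma_{\min}\lambda\tr(TU^*)$ cancel exactly, and one is left with
\[
\tfrac{\sigma_{\min}}{2}\bigl(n\lambda^2 - \lvert T \rvert_2^2\bigr) + \lambda^2 \tr(S') - \lambda \tr(S' T U^*).
\]

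To conclude I would apply the elementary Hölder-type estimate $\lvert \tr(S' B) \rvert \leq \lvert S' \rvert_{\tr}\,\lvert B \rvert_{\op}$, valid for any non-negative self-adjoint $S'$ and any endomorphism $B$; this is immediate by diagonalising $S'$ and bounding the diagonal entries of $B$ by $\lvert B \rvert_{\op}$. Taking $B = TU^*$, and using $\lvert TU^* \rvert_{\op} = \lvert T \rvert_{\op} \leq \lambda$, this yields $\lambda \tr(S' TU^*) \leq \lambda^2 \tr(S')$, so the second and third summands combine non-negatively. The first summand is non-negative as well since $\lvert T \rvert_2^2 = \sum_i \sigma_i(T)^2 \leq n \lvert T \rvert_{\op}^2 \leq n\lambda^2$, which completes the proof of the first bound; the second bound follows immediately.

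I do not expect any serious obstacle: the computation is essentially forced once the decomposition $S = \sigma_{\min} I_V + S'$ is written down. The conceptual point worth highlighting is that this splitting tightly balances two ingredients: the Hölder-type estimate only controls the deviation $S'$, while the scalar summand $\sigma_{\min} I_V$ supplies precisely the quadratic slack $\frac{\sigma_{\min}}{2}(n\lambda^2 - \lvert T \rvert_2^2)$ needed to absorb the difference $T - \lambda U$, which is also the term that vanishes in the rigid case $T = \lambda U$ with $\lambda = \lvert T \rvert_{\op}$.
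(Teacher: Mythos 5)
Your proof is correct and uses essentially the same ingredients as the paper's own argument: the identity $\tr P = \tr(S\circ T\circ U^\ast)$, the bound $\lvert TU^\ast\rvert_{\op} = \lvert T\rvert_{\op}\le\lambda$, and the domination $S\geq\sigma_{\min}\operatorname{id}_V$. Your explicit splitting $S=\sigma_{\min}\operatorname{id}_V+S'$ is simply a reorganized presentation of the paper's single inequality $\sigma_{\min}\tr(\lambda^2\operatorname{id}_V-\lambda T\circ U^\ast)\leq\tr(\lambda^2 S-\lambda S\circ T\circ U^\ast)$, so the two arguments differ only cosmetically.
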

\begin{proof} Let $(e_i)$ be an orthonormal basis of $W$. Then
    \begin{align*}
        \tfrac{\sigma_{\min}}{2} \lvert T - \lambda U \rvert_2^2 &= \tfrac{\sigma_{\min}}{2} \sum_{i} \lvert T(e_i) - \lambda U(e_i) \rvert^2 \\
        &= \tfrac{\sigma_{\min}}{2} \sum_{i} \lvert T(e_i) \rvert^2 + \lambda^2 \lvert U(e_i) \rvert^2 - 2 \lambda \langle T(e_i), U(e_i) \rangle \\
        &\leq \sigma_{\min} \sum_{i} \left( \lambda^2 - \lambda \langle U^\ast T(e_i), e_i \rangle\right) \\
        &= \sigma_{\min} \tr \left(\lambda^2 \id_W - \lambda U^\ast \circ T \right) \\
        &= \sigma_{\min} \tr \left(\lambda^2 \id_V - \lambda T \circ U^\ast \right) \\
        &\leq \tr \left( \lambda^2 S  - \lambda S \circ T \circ U^\ast \right) \\
        &= \tr \left( \lambda^2 S  - \lambda P \right) = \lvert S \rvert_{\tr}\ \lambda^2 - \lvert S \circ T \rvert_{\tr} \lambda . \qedhere
    \end{align*}
\end{proof}

\printbibliography
\end{document}